\documentclass[12pt]{amsart}
 \usepackage{amsmath, amsthm, amsfonts, amssymb, color}
\usepackage{amsfonts, amsmath}
 \setlength{\oddsidemargin}{ 0.3cm}
\setlength{\evensidemargin}{ -0.3cm}
\setlength{\topmargin}{-0.2cm}
 \textheight 23cm
 \textwidth 16.5cm

 \begin{document}

\theoremstyle{plain}
\newtheorem{theorem}{Theorem}[section]
\newtheorem{prop}[theorem]{Proposition}
\newtheorem{lemma}[theorem]{Lemma}
\newtheorem{corollary}[theorem]{Corollary}
\newtheorem{example}[theorem]{Example}
\newtheorem{remark}[theorem]{Remark}
\newcommand{\ra}{\rightarrow}
\renewcommand{\theequation}
{\thesection.\arabic{equation}}
\newcommand{\ccc}{{\cal C}}

\theoremstyle{definition}
\newtheorem{definition}[theorem]{Definition}

\allowdisplaybreaks

\def\HAL { H^1_{{at}, L_1, L_2, M}({\Bbb R}^{n_1}\times{\Bbb R}^{n_2})}
 \def\HSL { H^1_{L_1, L_2, S}({\Bbb R}^{n_1}\times{\Bbb R}^{n_2}) }
\def\HL  {   H^1_{L_1, L_2}({\Bbb R}^{n_1}\times{\Bbb R}^{n_2}) }
\def\HLD  {  H^{\rm 1}_{L} \cap L^2   }
\def\RR{\mathbb R}
\def\Rn{{\mathbb R}^n}
\def\Rm{{\mathbb R}^m}

 \medskip

\title  [   singular integrals with non-smooth kernels]
{ End-point estimates for singular integrals  with non-smooth kernels   on product  spaces }

\author{Xuan Thinh Duong,\  \ Ji Li \   and \   Lixin Yan}
\address {Xuan Thinh Duong, Department of Mathematics, Macquarie University, NSW 2109, Australia}
\email{xuan.duong@mq.edu.au}
\address{
Ji Li, Department of Mathematics, Macquarie University, NSW 2109, Australia}
\email{
ji.li@mq.edu.au}
\address{
Lixin Yan, Department of Mathematics, Sun Yat-sen (Zhongshan) University, Guangzhou, 510275, P.R. China}
\email{
mcsylx@mail.sysu.edu.cn
}
 \footnotetext[1]{{\it {\rm 2010} Mathematics Subject Classification:}
42B20, 42B25, 42B30.}
\footnotetext[1]{{\it Key words and phrase:}  Singular integrals, Hardy spaces, product space,
 atomic decomposition, spectral multiplier theorem.}

\date{}


\maketitle

\medskip

\begin{abstract} The main aim of this article is to   establish
boundedness of singular integrals
with non-smooth kernels on product spaces.
Let $L_1$ and $L_2$ be non-negative self-adjoint operators
on $L^2(\mathbb{R}^{n_1})$ and $L^2(\mathbb{R}^{n_2})$, respectively, whose heat kernels
satisfy Gaussian upper bounds. First, we obtain an atomic decomposition
for functions in $H^1_{L_1,
L_2}(\mathbb{R}^{n_1}\times\mathbb{R}^{n_2})$
where the Hardy space $H^1_{L_1,
L_2}(\mathbb{R}^{n_1}\times\mathbb{R}^{n_2})$ associated with $L_1$ and $L_2$
is defined by
square function norms, then prove an interpolation
property for this space. 
Next,
we establish  sufficient conditions for certain singular integral operators
to be bounded on   
the Hardy space $H^1_{L_1,
L_2}(\mathbb{R}^{n_1}\times\mathbb{R}^{n_2})$
when the associated kernels of these singular integrals only satisfy
regularity conditions significantly weaker than
those of the standard Calder\'on--Zygmund kernels.
  As applications,
we  obtain  endpoint  estimates  of the double Riesz transforms  associated to Schr\"odinger operators
 and a Marcinkiewicz-type
  spectral multiplier theorem  for non-negative self-adjoint operators on product spaces.

\end{abstract}

\maketitle

\tableofcontents

\section{Introduction and main results}
\setcounter{equation}{0}

Modern harmonic analysis was introduced in the 50s with the
Calder\'on--Zygmund theory at the heart of it. This theory
established criteria for singular integral operators to be
bounded on different scales of function spaces, especially the
Lebesgue spaces $L^p$, $1 < p < \infty$. To achieve this goal,
an integrated part of the Calder\'on--Zygmund theory includes
the theory of interpolation and the theory of function spaces,
in particular end-point spaces such as the Hardy and BMO spaces.

While the Calder\'on--Zygmund theory with one parameter was well
established in the three decades of the 60s to 80s, multiparameter
Fourier analysis was introduced later in the 70s and studied
extensively in the 80s by a number of well known mathematicians,
including R. Fefferman, S.-Y. A. Chang, R. Gundy, E. Stein, J.L.
Journ\'e (see for instance, \cite {CF1}, \cite{CF2}, \cite{CF3},
\cite{F1}, \cite{F2}, \cite{F3}, \cite{F4}, \cite{FSt}, \cite{GS},
\cite{Jo}). However, in contrast to the established one-parameter
theory, the multiparameter theory is much more complicated and is
less advanced, especially that  there was not much progress in the
last two decades on the topic of singular integrals with non-smooth
kernels on product spaces.

 The aim of this article is twofold: to obtain an atomic decomposition for the  new Hardy spaces
 introduced recently in \cite{DSTY}  and to establish
end-point estimates for singular integrals
with non-smooth kernels on product spaces.

Let us remind the reader that in the standard theory of singular integrals on product domains, R. Fefferman
 obtained the boundedness properties on Hardy spaces $H^1(\mathbb{R}^{n_1}\times\mathbb{R}^{n_2})$ and on $L ({\rm log}^+ L)$
 of singular integrals that generalize the double Hilbert transform on
 product domains (\cite{F1}) as follows.
Suppose that $T$ is a bounded linear operator
 on $L^2(\mathbb{R}^{n_1}\times\mathbb{R}^{n_2})$ with an associated kernel $K(x_1,y_1,x_2,y_2)$ in the sense that

\begin{eqnarray}\label{def of singular integral operator}
Tf(x_1,x_2)=\iint_{\mathbb{R}^{n_1}\times\mathbb{R}^{n_2}}
K(x_1,y_1,x_2,y_2)f(y_1,y_2)dy_1dy_2,
\end{eqnarray}

\noindent
and the above formula holds for each continuous function $f$ with compact support, and for almost all $(x_1, x_2)$
not in the support of $f$. For each $x_1, y_1\in \mathbb{R}^{n_1}$, set
${\widetilde K}^{(1)} (x_1, y_1) (x_2, y_2)$ to be the integral
operator acting on functions one variable whose kernel is given by

\begin{eqnarray} \label{0}
{\widetilde K}^{(1)} (x_1, y_1) (x_2, y_2)=K(x_1,y_1,x_2,y_2).
\end{eqnarray}

\noindent
Similarly, let

\begin{eqnarray} \label{1}
{\widetilde K}^{(2)}(x_2, y_2) (x_1, y_1)=K(x_1,y_1,x_2,y_2).
\end{eqnarray}

For an integral operator, $s$, acting on a function $f\in L^2({\mathbb R}^n)$
and given by

$$
sf(x)=\int_{{\mathbb R}^n} k(x,y)f(y)dy,
$$

\noindent
if $$ (\ast)\ \ \int_{|x-y|>\gamma |y-y'|}
 |k(x,y)- k(x,y') | dx\leq
  C\gamma^{-\delta}$$
for $\gamma\geq 2$ and some $\delta>0,$
 then define
$|s|_{CZ}=\|s\|_{L^2, L^2} +\inf \{C>0|\ (\ast) \ {\rm holds}\}.$
With these conventions, we can state a result of R. Fefferman
(\cite{F1}):

\medskip

 \noindent {\bf Theorem A.} {\it
Let $T$ be a bounded linear operator
 on $L^2(\mathbb{R}^{n_1}\times\mathbb{R}^{n_2})$ with an associated kernel $K(x_1,y_1,x_2,y_2)$.
Suppose that   there exists some constant  $\delta>0$   such
that for all $\gamma \geq 2$,

\begin{eqnarray} \label{e1.4}
 \int_{|x_1-y_1|>\gamma |y_1-y'_1|}
\big|{\widetilde K}^{(1)}{(x_1,y_1)}- {\widetilde
K}^{(1)}{(x_1,y'_1)}\big|_{ CZ} dx_1\leq C\gamma^{-\delta}
\end{eqnarray}
and
\begin{eqnarray} \label{e1.5}
 \int_{|x_2-y_2|>\gamma |y_2-y'_2|}
\big|{\widetilde K}^{(2)}{(x_2,y_2)}- {\widetilde
K}^{(2)}{(x_2,y'_2)}\big|_{ CZ}dx_2\leq C\gamma^{-\delta}.
\end{eqnarray}

\noindent
Then $T$ extends to a bounded operator from
$H^1(\mathbb{R}^{n_1}\times\mathbb{R}^{n_2})$ to
$L^1(\mathbb{R}^{n_1}\times\mathbb{R}^{n_2})$, and also has the following weak type estimate on
$L ({\rm log}^+ L)$:

$$
\big| \big\{ |Tf(x)|>\alpha, (x_1, x_2) \in [0,1]^{n_1}\times [0,1]^{n_2}\big\}\big| \leq {C\over \alpha}\|f\|_{L ({\rm log}^+ L)}, \ \ \ \forall \alpha>0
$$

\noindent
for all functions $f(x_1,x_2)$  whose supports are contained in the unit square.
 }

\bigskip

It should be noted  that unlike the one  parameter case, the operator $T$ does not satisfy
the weak type $(1,1)$ estimate. That is, for every $f\in L^1(\mathbb{R}^{n_1}\times\mathbb{R}^{n_2}),$
endpoint estimate  of

$$
\big| \big\{(x_1, x_2)\in \mathbb{R}^{n_1}\times\mathbb{R}^{n_2}:  |Tf(x)|>\alpha\big\}\big|
\leq {C\over \alpha}\|f\|_{L^1(\mathbb{R}^{n_1}\times\mathbb{R}^{n_2})} , \ \ \ \forall \alpha>0
$$

\noindent
fails. From the point of view of interpolation theory,
Theorem A shows that the  Calder\'on--Zygmund   theory on product domains  shift the focus of the attention from
the `weak' $L^1$ theory to the `strong' $(H^1, L^1)$-theory (see for examples, \cite{CF1, CF2, CF3, F1, F2, F3, F4}).

In this  article, we consider certain singular integrals on product spaces whose kernels are not smooth enough
to fall under the scope of Theorem A. More specifically, we replace
the smoothness conditions (\ref{e1.4}),  (\ref{e1.5}) by the weaker conditions (\ref{condn1-}), (\ref{condn2-}), and we add condition
(\ref {condn3-}). To overcome the difficulties created from the absence of
(\ref{e1.4}) and (\ref{e1.5}), our strategy is to use suitable generalized families of approximations to the identity
as in \cite{DM} and to develop atomic decomposition for suitable Hardy spaces associated with operators
(see for example \cite{DY1} and \cite{DSTY} for related Hardy spaces associated with operators).
 We then establish   endpoint  estimates of those singular integrals with non-smooth kernels
on appropriate Hardy spaces $H^1_{L_1,
L_2}(\mathbb{R}^{n_1}\times\mathbb{R}^{n_2})$,
where
  $H^1_{L_1,
L_2}(\mathbb{R}^{n_1}\times\mathbb{R}^{n_2})$  is a  class of Hardy spaces  associated with
non-negative self-adjoint operators  with Gaussian  upper bounds on
theirs heat kernels. See Section 3  for a detailed study of these Hardy spaces. We note that the need to study
Hardy spaces associated with operators arises from the fact that singular integral operators with non-smooth kernels
might not behave well on the standard Hardy spaces.

Our framework is as follows. Let $T$ be a bounded linear operator
 on $L^2(\mathbb{R}^{n_1}\times\mathbb{R}^{n_2})$ with an associated kernel $K(x_1,y_1,x_2,y_2)$.
Let $L_i, i=1,2$ be non-negative self-adjoint operators on
$L^2({\mathbb R}^{n_i})$ and that the semigroup $e^{-tL_i}$,
generated by $-L_i$ on $L^2({\mathbb R}^{n_i})$,  has the kernel
$p_{t}(x_i,y_i)$ which  satisfies the following  Gaussian upper
bound

$$
|p_{t}(x_i,y_i)| \leq \frac{C}{t^{n_i/2} } \exp\Big(-{|x_i-y_i|^2\over
c\,t}\Big), \ \ t>0, \ \ x_i, y_i\in{\mathbb R}^{n_i}.
\leqno{\rm(GE)}
$$

 \noindent
 Consider  the composite operators
$T\circ(e^{-t_1L_1}\otimes e^{-t_2L_2}), t_i\geq 0,$ which  have associated kernels
  $K_{(t_1, t_2)}(x_1,y_1,x_2,y_2)$ in the sense of (\ref{def of singular integral operator}).
For convenience, we write

\begin{eqnarray*}
&&\hspace{-1cm}\Delta K_{(t_1,\, t_2)}(x_1,y_1,x_2,y_2)\\[3pt]
&=& \big|K(x_1,y_1,x_2,y_2)-K_{(t_1,\, 0)}(x_1,y_1,x_2,y_2)-K_{(0,\, t_2)}(x_1,y_1,x_2,y_2)
  +K_{(t_1,\, t_2)}(x_1,y_1,x_2,y_2)\big|.
\end{eqnarray*}

\noindent
Set

$$
{\widetilde K}^{(1)}_{(t_1, t_2)} (x_1, y_1) (x_2, y_2)=K_{(t_1, t_2)}(x_1,y_1,x_2,y_2),\\[2pt]
$$
$$
{\widetilde K}^{(2)}_{(t_1, t_2)} (x_2, y_2) (x_1, y_1)=K_{(t_1, t_2)}(x_1,y_1,x_2,y_2).
$$

Instead of conditions (\ref{e1.4}) and (\ref{e1.5}), we assume the
following: Suppose that the composite operators
$T\circ(e^{-t_1L_1}\otimes e^{-t_2L_2}), t_i\geq 0, i=1,2$ have
associated kernels
  $K_{(t_1, t_2)}(x_1,y_1,x_2,y_2)$ in the sense of (\ref{def of singular integral operator})
  and there exist some constants $\delta>0$ and $C>0$ such
that for all $\gamma_1,\gamma_2\geq 2$,

\begin{eqnarray} \label{condn1-}
 \int_{|x_1-y_1|>\gamma_1t_1}
\|{\widetilde K}^{(1)}{(x_1,y_1)}-{{\widetilde K}_{(t_1^2, \, 0)}^{(1)}}{(x_1,y_1)}\|_{ (L^2({\mathbb R}^{n_2}),\, L^2({\mathbb R}^{n_2}) )}dx_1\leq
C\gamma_1^{-\delta},
\end{eqnarray}

\begin{eqnarray}\label{condn2-}
 \int_{|x_2-y_2|>\gamma_2t_2}
\|{\widetilde K}^{(2)}{(x_2,y_2)}-{{\widetilde K}_{(0, \, t_2^2)}^{(2)}}{(x_2,y_2)}\|_{ (L^2({\mathbb R}^{n_1}),\, L^2({\mathbb R}^{n_1}) )}dx_2\leq
C\gamma_2^{-\delta},
\end{eqnarray}
\begin{eqnarray} \label{condn3-}
 \int_{\substack{|x_1-y_1|>\gamma_1t_1\\ |x_2-y_2|>\gamma_2t_2}}\big|\Delta K_{(t_1^2,t_2^2)}(x_1,y_1,x_2,y_2)\big|dx_1dx_2\leq
C\gamma_1^{-\delta}\gamma_2^{-\delta}.
\end{eqnarray}

We note that in our conditions (\ref{condn1-}) and (\ref{condn2-}), the $L^2$ norm of the operators
in the integrands were used in place of the CZ estimate (which is stronger than $L^2$ norm) in (\ref{e1.4}) and (\ref{e1.5}),
meanwhile condition (\ref{condn3-}) plays a similar role to the required CZ estimate in \cite{F1}. It can be checked that
 conditions (\ref{condn1-}) and (\ref{condn2-}) are indeed weaker than conditions (\ref{e1.4}) and (\ref{e1.5}) (see \cite{DM}).
  Our main result on the boundedness of singular integrals is the following (Theorem \ref{theorem T from Hp to Lp} in Section 4).





 \bigskip
{\it Let $T$ be a bounded linear operator
 on $L^2(\mathbb{R}^{n_1}\times\mathbb{R}^{n_2})$ which
satisfies conditions   \eqref{condn1-},  \eqref{condn2-} and  \eqref{condn3-}. Then
%
%
$T$ extends to a bounded operator from
$H_{L_1, L_2}^1(\mathbb{R}^{n_1}\times\mathbb{R}^{n_2})$ to
$L^1(\mathbb{R}^{n_1}\times\mathbb{R}^{n_2})$,
hence by interpolation, bounded from $L^p(\mathbb{R}^{n_1}\times\mathbb{R}^{n_2})$
to $L^p(\mathbb{R}^{n_1}\times\mathbb{R}^{n_2})$ for $1 < p < 2$.}
\smallskip


\bigskip

In Section 6, we shall exhibit  a class of singular integrals which satisfy the conditions
 (\ref{condn1-}),  (\ref{condn2-}) and  (\ref{condn3-}) but not the conditions (\ref{e1.4}) and (\ref{e1.5}).
More specifically, we use Theorem
\ref{theorem T from Hp to Lp} 
to obtain boundedness of

\medskip

 (i)  the double Riesz transforms  associated to Schr\"odinger operators with non-negative potentials
 (Theorem \ref{th6.0}); and

 \medskip

 (ii)  a variant of  the Marcinkiewicz
  spectral multiplier theorem  for non-negative self-adjoint operators on product spaces
  when the operators satisfy upper Gaussian heat kernel bounds  (Theorem \ref{th6.1}).

\bigskip

The layout of the paper is as follows.  In Section 2 we recall some basic properties of
heat kernels and finite propagation speed for the wave equation.
  In Section 3 we shall
obtain an atomic decomposition
of  functions for $H^1_{L_1,
L_2}(\mathbb{R}^{n_1}\times\mathbb{R}^{n_2})$  associated with
non-negative self-adjoint operators  with Gaussian  upper bounds on
the heat kernels. The atomic decomposition for elements in the Hardy spaces
$H^1_{L_1,
L_2}(\mathbb{R}^{n_1}\times\mathbb{R}^{n_2})$  (Theorem \ref{theorem of Hardy space atomic decom})
is of independent interest and it
also plays a key role in the proofs of the main result in Section 4, 
namely Theorem \ref{theorem T from Hp to Lp},
which give
 endpoint  estimates  of
boundedness of singular integrals on Hardy spaces 
on product domains.
 Finally, in Section 5, we apply our main results to
deduce  endpoint  estimates  of  the double Riesz transforms  associated to Schr\"odinger operators
with non-negative potentials and  to obtain boundedness  of a Marcinkiewicz-type
  spectral multiplier theorem  for non-negative self-adjoint operators on product spaces.

Throughout, the letter ``$c$"  and ``$C$" will denote (possibly
different) constants  that are independent of the essential
variables.

\vskip 1cm

\section{Backgrounds on heat kernel bounds and spectral multipliers}
\setcounter{equation}{0}

\medskip
This section contains the backgrounds on heat kernel bounds, finite propagation speed of solutions
to the wave equations and spectral multipliers of non-negative self-adjoint operators.

\subsection{Assumptions on heat kernel bounds}\
Unless otherwise specified in the sequel we always assume that $L_i, i=1,2$ are
non-negative self-adjoint operators on $L^2({\mathbb R}^{n_i})$
and that each of
the semigroups $e^{-tL_i}$, generated by $-L_i$ on $L^2({\mathbb R}^{n_i})$,  has the kernel  $p_{t}(x_i,y_i)$
which  satisfies
the following  Gaussian upper bound

$$
|p_{t}(x_i,y_i)| \leq \frac{C}{t^{n_i/2} } \exp\Big(-{|x_i-y_i|^2\over
c\,t}\Big)
\leqno{\rm (GE)}
$$

 \noindent
for all $t>0$,  and $x_i,y_i\in {\mathbb R}^{n_i},$   where $C $ and $ c$   are positive constants.

\medskip

Such estimates are typical for elliptic or sub-elliptic differential operators of second
order (see for instance, \cite{Da} and \cite{DOS}). As the semigroups $e^{-tL_i}$
are holomorphic,  the Gaussian upper bounds for $p_t(x_i,y_i)$ are further
inherited by the time derivatives of $p_{t}(x_i,y_i)$. That is, for each
$k\in{\mathbb N}$, there exist two positive constants $c_k$ and $C_k$ such
that

\begin{eqnarray}\label{time derivative of heat hernel}
\Big|{\partial^k \over\partial t^k} p_{t}(x_i,y_i) \Big|\leq
\frac{C_k}{  t^{n_i /2+k} } \exp\Big(-{|x_i-y_i|^2\over
c_k\,t}\Big)
\end{eqnarray}

\noindent for all $t>0$,  and $x_i,y_i\in {\mathbb R}^{n_i}$. For
the proof of (\ref{time derivative of heat hernel}), see   \cite{Da}
and \cite{Ou}, Theorem~6.17.

\smallskip

\medskip

In what follows, we denote
$$
{\mathbb R}^{n_1+1}_+\times {\mathbb R}^{n_2+1}_+
=\Big\{(x,t):\  x=(x_1, x_2)\in {\mathbb R}^{n_1}\times {\mathbb R}^{n_2}, t=(t_1, t_2),
t_i\geq 0, i=1,2\Big\}.
$$
For any $(x,t)\in {\mathbb R}^{n_1+1}_+\times {\mathbb R}^{n_2+1}_+$ and
 $f\in L^2({\mathbb R}^{n_1}\times {\mathbb R}^{n_2})$,    we set

$$
(e^{-t_1L_1}\otimes e^{-t_2L_2})f(x_1,x_2):=\iint_{\mathbb{R}^{n_1}\times\mathbb{R}^{n_2}}
p_{t_1}(x_1,y_1)p_{t_2}(x_2,y_2)f(y_1,y_2)dy_1dy_2,
$$

\noindent
where we
wish to stress that if $t=0$, then $e^{-tL_i}=1\!\!1_i, i=1,2$, the identity operator on $L^2({\mathbb R}^{n_i})$.

In the absence of regularity on space variables of $p_t(x_i,y_i)$, estimate (\ref{time derivative of heat hernel})
plays an important role in our theory.




\medskip

\subsection{Finite  propagation speed for the wave equation and spectral multipliers}\ Let us
recall that,  if $L$ is a non-negative, self-adjoint operator on $L^2({\mathbb R}^{n})$,
and $E_{L}(\lambda)$ denotes its spectral decomposition, then for every bounded
Borel function $F:[0,\infty)\to{\mathbb{C}}$, one defines the bounded operator
$F(L): L^2({\mathbb R}^{n})\to L^2({\mathbb R}^{n})$ by the formula
\begin{eqnarray}\label{e3.11}
F(L)=\int_0^{\infty}F(\lambda)\,dE_{L}(\lambda).
\end{eqnarray}
In particular, the operator $\cos(t\sqrt{L})$ is then well-defined and bounded
on $L^2({\mathbb R}^{n})$. Moreover, it follows from Theorem 3 of \cite{CS}
  that if  the corresponding
 heat kernels $p_{t}(x,y)$ of $e^{-tL}$ satisfy Gaussian bounds ${\rm (GE)}$, then  there exists a finite,
positive constant $c_0$ with the property that the Schwartz
kernel $K_{\cos(t\sqrt{L})}$ of $\cos(t\sqrt{L})$ satisfies
\begin{eqnarray}\label{e3.12} \hspace{1cm}
{\rm supp}K_{\cos(t\sqrt{L})}\subseteq
\big\{(x,y)\in {\mathbb R}^{n}\times {\mathbb R}^{n}: |x-y|\leq c_0 t\big\}.
\end{eqnarray}
\noindent See also \cite{Si2}. By the Fourier inversion
formula, whenever $F$ is an even, bounded, Borel function with its Fourier transform
$\hat{F}\in L^1(\mathbb{R})$, we can write $F(\sqrt{L})$ in terms of
$\cos(t\sqrt{L})$. More specifically,  we have
\begin{eqnarray}\label{XCP}
F(\sqrt{L})=(2\pi)^{-1}\int_{-\infty}^{\infty}{\hat F}(t)\cos(t\sqrt{L})\,dt,
\end{eqnarray}
which, when combined with (\ref{e3.12}), gives
\begin{eqnarray}\label{e3.13} \hspace{1cm}
K_{F(\sqrt{L})}(x,y)=(2\pi)^{-1}\int_{|t|\geq c_0^{-1}|x-y|}{\hat F}(t)
K_{\cos(t\sqrt{L})}(x,y)\,dt,\qquad \forall\,x,y\in{\mathbb R}^{n}.
\end{eqnarray}

The following  result
(see Lemma~3.5, \cite{HLMMY}) is useful for certain estimates later.

\begin{lemma}\label{lemma finite speed} Let $\varphi\in C^{\infty}_0(\mathbb R)$ be
even, $\mbox{supp}\,\varphi \subset (-c_0^{-1}, c_0^{-1})$, where $c_0$ is
the constant in  \eqref{e3.12}. Let $\Phi$ denote the Fourier transform of
$\varphi$. Then for every $\kappa=0,1,2,\dots$, and for every $t>0$,
the kernel $K_{(t^2L)^{\kappa}\Phi(t\sqrt{L})}(x,y)$ of the operator
$(t^2L)^{\kappa}\Phi(t\sqrt{L})$ which was defined by the spectral theory, satisfies

\begin{eqnarray}\label{e3.11}
{\rm supp}\ \! K_{(t^2L)^{\kappa}\Phi(t\sqrt{L})}(x,y) \subseteq
\Big\{(x,y)\in \mathbb{R}^{n}\times \mathbb{R}^{n}: |x-y|\leq t\Big\}.
\end{eqnarray}
\end{lemma}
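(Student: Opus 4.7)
The plan is to deduce the kernel support from the finite propagation speed estimate \eqref{e3.12} for the wave operator $\cos(s\sqrt{L})$ via the Fourier representation \eqref{XCP}.

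First I would set $F(\lambda):=(t\lambda)^{2\kappa}\Phi(t\lambda)$, so that by the functional calculus $F(\sqrt L)=(t^2L)^{\kappa}\Phi(t\sqrt L)$. Since $\varphi\in C_0^\infty(\mathbb R)$ is even, its Fourier transform $\Phi$ is even, and hence so is $F$; moreover $F$ is Schwartz in $\lambda$, so \eqref{XCP} applies and yields
\begin{eqnarray*}
(t^2 L)^{\kappa}\Phi(t\sqrt L)
= (2\pi)^{-1}\int_{-\infty}^{\infty}\hat F(s)\cos(s\sqrt L)\,ds.
\end{eqnarray*}

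The next step is to locate the support of $\hat F$. The scaling $\mathcal F[\Phi(t\,\cdot)](s)=2\pi t^{-1}\varphi(s/t)$ (which follows because $\Phi=\hat\varphi$ and $\varphi$ is even), combined with the fact that multiplication by $\lambda^{2\kappa}$ corresponds on the Fourier side to $(-1)^{\kappa}\partial_s^{2\kappa}$, gives
\begin{eqnarray*}
\hat F(s)=2\pi(-1)^{\kappa}\,t^{-1}\varphi^{(2\kappa)}(s/t).
\end{eqnarray*}
Because $\mathrm{supp}\,\varphi\subset(-c_0^{-1},c_0^{-1})$, the same is true of $\varphi^{(2\kappa)}$, so $\mathrm{supp}\,\hat F\subset\{|s|\le t/c_0\}$.

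Plugging this into \eqref{e3.13} produces
\begin{eqnarray*}
K_{(t^2L)^{\kappa}\Phi(t\sqrt L)}(x,y)
=(2\pi)^{-1}\int_{c_0^{-1}|x-y|\le |s|\le t/c_0}\hat F(s)\,K_{\cos(s\sqrt L)}(x,y)\,ds,
\end{eqnarray*}
and the domain of integration is empty unless $c_0^{-1}|x-y|\le t/c_0$, i.e.\ $|x-y|\le t$, which is the desired conclusion. The only delicate point is that the polynomial factor $\lambda^{2\kappa}$ prevents $F$ itself from being the Fourier transform of a compactly supported $L^1$ function; however, since $\varphi$ is smooth with compact support its derivatives $\varphi^{(2\kappa)}$ inherit the same support, so all Fourier-side manipulations (integration by parts in the sense of tempered distributions) are rigorously justified and the support conclusion follows at once from \eqref{e3.12}.
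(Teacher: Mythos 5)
Your proof is correct and follows the standard finite--propagation--speed argument: apply the Fourier representation \eqref{XCP} with $F(\lambda)=(t\lambda)^{2\kappa}\Phi(t\lambda)$, compute $\hat F(s)=2\pi(-1)^{\kappa}t^{-1}\varphi^{(2\kappa)}(s/t)$ so that ${\rm supp}\,\hat F\subseteq\{|s|\le t/c_0\}$, and combine with \eqref{e3.13} to force $|x-y|\le t$. The paper does not reproduce a proof but cites Lemma~3.5 of \cite{HLMMY}, whose argument is exactly this one, so your write-up matches the intended approach.
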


Going further, we state the following version. 
(see Lemma~3.5, \cite{HLMMY}).

\begin{lemma}\label{lemma finite speed 2}
  Let
$\varphi\in C^{\infty}_0(\mathbb R)$ be an even function with
$\int\varphi=2\pi$, ${\rm supp}\,\varphi \subset(-1,1)$.
  For every
$m=0,1, 2,\dots$, set $\Phi(\xi):={\hat \varphi}(\xi)$,
$
\Phi^{(m)}(\xi):={d^m\over d\xi^m} \Phi(\xi)
.
$
 Let $\kappa, m\in \mathbb{N}$ and $\kappa + m\in
2\mathbb{N}$. Then   for any $t>0$, the kernel
$K_{(t\sqrt{L})^{\kappa}\Phi^{(m)}(t\sqrt{L})}(x,y)$ of
  $(t\sqrt{L})^{\kappa}\Phi^{(m)}(t\sqrt{L})$ satisfies
\begin{eqnarray}\label{e2.5}\hspace{1.2cm}
{\rm supp}\ \! K_{(t\sqrt{L})^{\kappa}\Phi^{(m)}(t\sqrt{L})}
\subseteq \big\{(x,y)\in \mathbb{R}^{n}\times \mathbb{R}^{n}:\,|x-y|\leq t\big\}
\end{eqnarray}
 and
\begin{eqnarray}\label{ek}
\big|K_{(t\sqrt{L})^{\kappa}\Phi^{(m)}(t\sqrt{L})}(x,y)\big|
\leq C  \, t^{-n}
\end{eqnarray}
for any $x,y\in \mathbb{R}^{n}.$

\end{lemma}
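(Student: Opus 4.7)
My plan is twofold: derive the kernel support from Fourier inversion combined with the finite-propagation principle recalled in \eqref{e3.13}, and obtain the pointwise bound \eqref{ek} from the stronger operator inequality $\|\tilde F(t\sqrt L)\|_{L^1\to L^\infty}\leq Ct^{-n}$, where $\tilde F(\eta):=\eta^\kappa\Phi^{(m)}(\eta)$.

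For the support, the main point is that $\widehat{\tilde F}$ is supported in $[-1,1]$, whereupon scaling places $\widehat{F_t}$ with $F_t:=\tilde F(t\,\cdot)$ in $[-t,t]$, and formula \eqref{e3.13} then gives the stated kernel support in \eqref{e2.5} (with the finite-propagation constant absorbed into the normalization). Since $\Phi=\hat\varphi$ with $\varphi\in C^\infty_0(-1,1)$ real and even, using $\hat{\hat f}=2\pi f(-\cdot)$ together with the polynomial-to-derivative rule $\widehat{\eta^\kappa h}(s)=i^\kappa\partial_s^\kappa\hat h(s)$ yields
$$
\widehat{\tilde F}(s)=2\pi\, i^{\kappa+m}\,\partial_s^\kappa\!\left[s^m\varphi(s)\right],
$$
a $C^\infty_0$ function supported in $(-1,1)$; evenness of $\tilde F$, needed to apply \eqref{XCP}, follows from $\kappa+m\in 2\mathbb N$.

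For the size bound I would fix an integer $N>n/4$ and factor, by joint spectral calculus,
$$
\tilde F(t\sqrt L)=(I+t^2L)^{-N}\,G(t\sqrt L)\,(I+t^2L)^{-N},\qquad G(\eta):=(1+\eta^2)^{2N}\tilde F(\eta).
$$
Since $\tilde F$ is Schwartz (because $\Phi$ and all its derivatives are), $G$ is Schwartz as well and in particular $\|G\|_\infty<\infty$, so $\|G(t\sqrt L)\|_{L^2\to L^2}\leq\|G\|_\infty$ uniformly in $t$. For the two resolvent factors I would combine the subordination identity $(I+t^2L)^{-N}=\Gamma(N)^{-1}\int_0^\infty s^{N-1}e^{-s}e^{-st^2L}\,ds$ with the $L^p\to L^q$ smoothing consequences of (GE),
$$
\|e^{-rL}\|_{L^2\to L^\infty}+\|e^{-rL}\|_{L^1\to L^2}\leq Cr^{-n/4},
$$
noting that the resulting $s$-integrals $\int_0^\infty s^{N-1-n/4}e^{-s}\,ds$ converge precisely for $N>n/4$. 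This produces
$$
\|(I+t^2L)^{-N}\|_{L^2\to L^\infty}+\|(I+t^2L)^{-N}\|_{L^1\to L^2}\leq Ct^{-n/2}.
$$
Chaining the three factors as $L^1\to L^2\to L^2\to L^\infty$ yields $\|\tilde F(t\sqrt L)\|_{L^1\to L^\infty}\leq Ct^{-n}$, which is equivalent to \eqref{ek}.

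I expect the only delicate step to be the rigorous justification of the factorization acting on $L^1$: one first verifies the identity on $L^2$ via the functional calculus of $L$, then extends to $f\in L^1\cap L^2$ (dense in $L^1$) using the just-established $L^1\to L^2$ bound of $(I+t^2L)^{-N}$. Everything else reduces to direct consequences of (GE) and the Schwartz decay of $\Phi$.
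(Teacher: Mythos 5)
Your proof is correct. The paper offers no argument of its own for this lemma, deferring to Lemma 3.5 of \cite{HLMMY}, and your proof supplies the standard one. The support estimate via Fourier inversion and the finite-propagation bound \eqref{e3.12} is right, and you correctly identified that the evenness of $\tilde F(\eta)=\eta^\kappa\Phi^{(m)}(\eta)$ needed for \eqref{XCP} is exactly what $\kappa+m\in2\mathbb N$ provides, with $\widehat{\tilde F}(s)=2\pi i^{\kappa+m}\partial_s^\kappa[s^m\varphi(s)]$ compactly supported in $(-1,1)$. For the size bound, the factorization $(I+t^2L)^{-N}G(t\sqrt L)(I+t^2L)^{-N}$ chained $L^1\to L^2\to L^2\to L^\infty$, together with the subordination formula and the $L^1\to L^2$ smoothing of $e^{-rL}$ from (GE), gives precisely $\|\tilde F(t\sqrt L)\|_{L^1\to L^\infty}\leq Ct^{-n}$, hence \eqref{ek}. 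Two points are worth noting. First, strictly speaking the finite-propagation argument yields support in $\{|x-y|\leq c_0 t\}$; since the paper requires supp$\,\varphi\subset(-c_0^{-1},c_0^{-1})$ in Lemma \ref{lemma finite speed} but supp$\,\varphi\subset(-1,1)$ here, the statement implicitly normalizes $c_0=1$, as you observed. Second, your resolvent factor rather than the heat semigroup is a sound choice: $\tilde F$ has compactly supported Fourier transform and so is entire of exponential type, precluding Gaussian decay of $\tilde F$ on $\mathbb R$; a naive factorization through $e^{-t^2L/2}$ with middle symbol $e^{\eta^2}\tilde F(\eta)$ would therefore fail, whereas $(1+\eta^2)^{2N}\tilde F(\eta)$ is bounded because $\tilde F$ is Schwartz.
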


\medskip

Finally, for $s>0$, we define
$$
{\Bbb F}(s)=\Big\{\psi:{\Bbb C}\to{\Bbb C}\ {\rm measurable}: \ \
|\psi(z)|\leq C {|z|^s\over ({1+|z|^{2s}})}\Big\}.
$$
Then for any non-zero function $\psi\in {\Bbb F}(s)$, we have that
$\{\int_0^{\infty}|{\psi}(t)|^2\frac{dt}{t}\}^{1/2}<\infty$.
Denote by  $\psi_t(z)=\psi(tz)$. It follows from the spectral theory
in \cite{Yo} that for any $f\in L^2({\mathbb R}^n)$,

\begin{eqnarray}
\Big\{\int_0^{\infty}\|\psi(t\sqrt{L})f\|_{L^2({\mathbb R}^n)}^2{dt\over t}\Big\}^{1/2}
&=&\Big\{\int_0^{\infty}\big\langle\,\overline{ \psi}(t\sqrt{L})\,
\psi(t\sqrt{L})f, f\big\rangle {dt\over t}\Big\}^{1/2}\nonumber\\
&=&\Big\{\big\langle \int_0^{\infty}|\psi|^2(t\sqrt{L}) {dt\over t}f,
f\big\rangle\Big\}^{1/2}\nonumber\\
&\leq& \kappa \|f\|_{L^2({\mathbb R}^n)}, \label{e2.155}
\end{eqnarray}

\noindent where $\kappa=C_L\big\{\int_0^{\infty}|{\psi}(t)|^2
{dt/t}\big\}^{1/2},$ an estimate which will be often used in the
sequel.
\medskip

\vskip 1cm

\section{Hardy space $H^1_{L_1,
L_2}(\mathbb{R}^{n_1}\times\mathbb{R}^{n_2})$ and its atomic decomposition characterization }
\setcounter{equation}{0}

We shall work exclusively with the domain  $
 {\Bbb R}^{n_{1}+1}_+\times{\Bbb R}^{n_{2}+1}_+$ and its distinguished boundary, ${\Bbb R}^{n_1}\times{\Bbb R}^{n_2}$.
If $x=(x_1,x_2)\in {\Bbb R}^{n_1}\times{\Bbb R}^{n_2}$, $\Gamma(x)$
will denote the product cone  $\Gamma(x)=\Gamma(x_1)\times
\Gamma(x_2)$ where $\Gamma(x_1)=\{(y_1,t_1)\in {\Bbb R}^{n_1+1}_+:
|x_1-y_1|<t_1\} $ and $\Gamma(x_2)=\{(y_2,t_2)\in {\Bbb
R}^{n_2+1}_+: |x_2-y_2|<t_2\}. $ If $(x,t)\in {\Bbb
R}^{n_1+1}_+\times{\Bbb R}^{n_2+1}_+$, then $R_{x,t}$ will denote
the rectangle centered at $x\in {\Bbb R}^{n_1}\times{\Bbb R}^{n_2}$
whose side lengths are $t_1$ and $t_2$. For any open set
$\Omega\subset {\Bbb R}^{n_1}\times{\Bbb R}^{n_2}$, the tent over
$\Omega$, $T(\Omega)$, is the set

\begin{eqnarray} \label{tent over Omega}
\Big{\{}(x,t)\in {\Bbb R}^{n_1+1}_+\times{\Bbb R}^{n_2+1}_+:\
R_{x,t}\subseteq \Omega  \Big{\}}.
\end{eqnarray}

Suppose  that $L_i, i=1,2,$ are  non-negative self-adjoint operator on
$L^2({\mathbb R}^{n_i})$ such that the corresponding
 heat kernels $p_{t_i}(x,y)$ satisfy Gaussian bounds ${\rm (GE)}$.
Given a function $f$ on ${\Bbb R}^{n_1}\times{\Bbb R}^{n_2}$, the area integral function
$Sf$ associated with an operator $L$ is defined by

\begin{eqnarray}\label{esf}
Sf(x)= \bigg(\iint_{\Gamma(x) }\big|\big(t_1^2L_1e^{-t_1^2L_1}\otimes
t_2^2L_2e^{-t_2^2L_2}\big)f(y)\big|^2\
{dy \ \! dt\over t_1^{n_1+1} t_2^{n_2+1}}\bigg)^{1/2}. \label{e3.2}
\end{eqnarray}

\noindent    It is known
that for   $  1<p<\infty,$
 there exist constants $C_1,
C_2$ (which depend on $p$) such that $0<C_1\leq C_2<\infty$ and

\begin{eqnarray}\label{S-function bd on Lp}
C_1\|f\|_p\leq \|Sf\|_p\leq C_2\|f\|_p.
\end{eqnarray}


We adopt the following definition from \cite{DSTY}.

\begin{definition} \label{def of Hardy space via S function}
For $i=1,2$,  let each $L_i$ be a non-negative self-adjoint operator on
$L^2({\mathbb R}^{n_i})$ such that the corresponding
 heat kernels $p_{t_i}(x,y)$ satisfy Gaussian bounds ${\rm (GE)}$.
 The Hardy space $H^1_{{L_1,L_2}}(\mathbb{R}^{n_1}\times\mathbb{R}^{n_2})$
associated to $L_1$ and $L_2$ is defined as the completion of

$$\{ f\in L^2(\mathbb{R}^{n_1}\times\mathbb{R}^{n_2}): \|Sf\|_{L^1(\mathbb{R}^{n_1}\times\mathbb{R}^{n_2})}<\infty \}$$

\noindent with respect to the norm

$$
\|f\|_{H^{1}_{{L_1,L_2}}(\mathbb{R}^{n_1}\times\mathbb{R}^{n_2})
}=\|Sf \|_{L^1(\mathbb{R}^{n_1}\times\mathbb{R}^{n_2})}.
$$
\end{definition}

\bigskip

\noindent
{\bf Remarks.} \

\smallskip
 (i) Note first that   $H^1_{{L_1,L_2}}(\mathbb{R}^{n_1}\times\mathbb{R}^{n_2})$ is a normed linear space. By a standard argument of functional
 analysis (\cite{Yo}) that   $H^1_{{L_1,L_2}}(\mathbb{R}^{n_1}\times\mathbb{R}^{n_2})$ is a Banach space.

\smallskip

(ii) Let  $L_i, i=1,2$ be the Laplacian $\triangle_{n_i}$  on ${\mathbb R}^{n_i}$. It follows from area integral characterization
of Hardy space by using convolution
that the   Hardy  space $H^1(\mathbb{R}^{n_1}\times\mathbb{R}^{n_2})$ coincides with
the spaces   $H^1_{\triangle_{n_1}, \triangle_{n_2}}(\mathbb{R}^{n_1}\times\mathbb{R}^{n_2})$
 and
their norms are equivalent. See \cite{CF1, CF2, F1}.

 \bigskip

\noindent {\bf 3.1. Atomic decomposition for $H^1_{L_1,
L_2}({\mathbb R}^{n_1}\times {\mathbb R}^{n_2})$.}\
 Suppose $\Omega\subset {\Bbb R}^{n_1}\times{\Bbb R}^{n_2}$
is open of finite measure. Denote by $m(\Omega)$ the maximal dyadic
subrectangles of $\Omega$. Let $m_1(\Omega)$  denote those dyadic
subrectangles $R\subseteq \Omega, R=I\times J$ that are maximal in
the $x_1$ direction. In other words if $S=I'\times J\supseteq R$ is
a dyadic subrectangle of $\Omega$, then $I=I'.$ Define $m_2(\Omega)$
similarly.   Let
$${\widetilde \Omega}=\big\{x\in {\mathbb R}^{n_1}\times\mathbb{R}^{n_2}:
M_s(\chi_{\Omega})(x)>{1\over 2}\big\},
$$

\noindent where $M_s$ is the strong maximal operator defined as
$$
   M_s(f)(x)=\sup_{R:\ {\rm \ rectangles\ in\ } {\mathbb R}^{n_1}\times\mathbb{R}^{n_2},\ x\in R}{1\over |R|}\int_R|f(y)|dy.
$$

\noindent For any $R=I\times J\in m_1(\Omega)$, we  set
$\gamma_1(R)=\gamma_1(R, \Omega)=\sup{|l|\over |I|},$ where the
supremum is taken over all dyadic intervals $l: I\subset l$ so that
$l\times J\subset {\widetilde \Omega}$. Define $\gamma_2$ similarly.
Then Journ\'e's lemma, (in one of its forms) says, for  any
$\delta>0$,
\begin{eqnarray*}
\sum_{R\in m_2(\Omega)} |R|\gamma_1^{-\delta}(R)\leq
c_{\delta}|\Omega|
 \ \ \ {\rm and}\ \ \
 \sum_{R\in m_1(\Omega)} |R|\gamma_2^{-\delta}(R)\leq c_{\delta}|\Omega|
 \end{eqnarray*}

\noindent for some  $c_{\delta}$ depending only on $\delta$, not on
$\Omega.$

We now introduce the notion of
$(H^1_{L_1, L_2}, 2, M)$-atom associated to operators.

\begin{definition}\label{def of product atom}  Let $M$ be a positive integer.
A function $a(x_1, x_2)\in L^2({\Bbb R}^{n_1}\times{\Bbb R}^{n_2})$ is called a
$(H^1_{L_1, L_2}, 2, M)$-atom     if it satisfies

\medskip

\noindent $ 1)$  {\rm supp} $a\subset \Omega$, where $\Omega$
is an open set of ${\Bbb R}^{n_1}\times{\Bbb R}^{n_2}$ with finite measure;

\medskip

\noindent $ 2)$ $a$ can be further decomposed into
$$
a=\sum\limits_{R\in m(\Omega)} a_R
$$

\noindent where $m(\Omega)$ is the set of all maximal dyadic
subrectangles of $\Omega$, and there exists a series of function $b_R $ belonging to the range of
$L_1^{k_1}\otimes L_2^{k_2}$ in $L^2({\Bbb R}^{n_1}\times{\Bbb R}^{n_2})$, for each
$k_1, k_2=1, \cdots, M,$   such that

\smallskip

 (i) \ \ \ $a_R=\big(L_1^{M} \otimes L_2^{M}\big) b_R$;

 \smallskip

 (ii) \ \ {\rm supp}\  $\big(L_1^{k_1}  \otimes L_2^{k_2}\big)b_R\subset
10R$, \ \ $ k_1, k_2=0, 1, \cdots, M$;

\smallskip

 (iii) \ $||a||_{L^2({\Bbb R}^{n_1}\times{\Bbb R}^{n_2})}\leq
|\Omega|^{-{1\over 2}}$ and

$$
\sum_{R=I_R\times J_R\in m(\Omega)}\ell(I_R)^{-4M} \ell(J_R)^{-4M}
\Big\|\big(\ell(I_R)^2 \, L_1\big)^{k_1}\otimes \big(\ell(J_R)^2 \,
L_2\big)^{k_2} b_R\Big\|_{L^2({\Bbb R}^{n_1}\times{\Bbb
R}^{n_2})}^2\leq |\Omega|^{-1}.
$$
\end{definition}

\medskip

We are now able to define an atomic $H^1_{L_1,L_2,at,M}$ space,
which we shall eventually show that it is equivalent to the space
$H^1_{L_1,L_2}$ via square functions.

\medskip

\begin{definition}\label{def-of-atomic-product-Hardy-space}
Let $M>\max\{n_1, n_2\}/4$. The Hardy spaces $H^1_{L_1, L_2, at,M}(\mathbb{R}^{n_1}\times\mathbb{R}^{n_2})$
is defined as follows.
  We   say that $f=\sum\lambda_ja_j$ is an atomic
$(H^1_{L_1, L_2}, 2, M)$-representation of $f$ if $\{\lambda_j\}_{j=0}^\infty\in
\ell^1$, each $a_j$ is a $(H^1_{L_1, L_2}, 2, M)$-atom, and the sum converges in
$L^2(\mathbb{R}^{n_1}\times\mathbb{R}^{n_2})$. Set

$$
\mathbb{H}^1_{L_1, L_2, at, M}(\mathbb{R}^{n_1}\times\mathbb{R}^{n_2})=\big\{f: f {\rm\ has\
an\ atomic\ } (H^1_{L_1, L_2}, 2, M)-{\rm representation}\big\},
$$

\noindent with the norm given by

\begin{eqnarray}\label{Hp norm}
&&\|f\|_{\mathbb{H}^1_{L_1, L_2, at, M}(\mathbb{R}^{n_1}\times\mathbb{R}^{n_2})}\\
 &&\
\ =\inf\Big\{ \sum_{j=0}^\infty |\lambda_j|: f=\sum_j\lambda_ja_j
{\rm\ is\ an\ atomic\ } (H^1_{L_1, L_2}, 2, M) {\rm -representation}
\Big\}.\nonumber
\end{eqnarray}

\noindent The space
$H^1_{L_1, L_2, at,M}(\mathbb{R}^{n_1}\times\mathbb{R}^{n_2})$ is then
defined as the completion of
$\mathbb{H}^1_{L_1, L_2,at,M}(\mathbb{R}^{n_1}\times\mathbb{R}^{n_2})$ with
respect to this norm.
\end{definition}

\medskip

We shall say see that any fixed choice of $M>\max\{n_1, n_2\}/4$, the Hardy spaces
$H^1_{L_1, L_2, at,M}(\mathbb{R}^{n_1}\times\mathbb{R}^{n_2})$ yield the same space.
Indeed, we shall show that the ``square function" and ``atom" $H^1$ spaces are
equivalent, if the parameter $M>\max\{n_1, n_2\}/4$ in the next theorem.

\begin{theorem}\label{theorem of Hardy space atomic decom}
Suppose that  $M>\max\{n_1, n_2\}/4$. Then
$$
H^1_{{L_1,L_2}}(\mathbb{R}^{n_1}\times\mathbb{R}^{n_2})=H^1_{L_1,L_2,at,M}(\mathbb{R}^{n_1}\times\mathbb{R}^{n_2}).
$$
Moreover,
$$
\|f\|_{H^1_{{L_1,L_2}}(\mathbb{R}^{n_1}\times\mathbb{R}^{n_2})}\approx
\|f\|_{H^1_{L_1,L_2,at,M}(\mathbb{R}^{n_1}\times\mathbb{R}^{n_2})},
$$

\noindent where the implicit constants depend only on $M$, $n_1$ and
$n_2$.
\end{theorem}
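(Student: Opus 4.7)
The plan is to establish the two norm inequalities separately, following the tent-space / Calderón reproducing formula strategy that has become standard for Hardy spaces adapted to operators, but adapted to the product setting via Journé's lemma.

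For the containment $H^1_{L_1,L_2,at,M} \hookrightarrow H^1_{L_1,L_2}$, it suffices to show that every $(H^1_{L_1,L_2},2,M)$-atom $a$ supported in an open set $\Omega$ satisfies $\|Sa\|_{L^1} \leq C$ uniformly. The natural partition is
$$\int_{\mathbb{R}^{n_1}\times\mathbb{R}^{n_2}} Sa \,=\, \int_{C\widetilde{\widetilde{\Omega}}} Sa \,+\, \int_{(C\widetilde{\widetilde{\Omega}})^c} Sa,$$
where $\widetilde{\widetilde{\Omega}}$ is a twice-iterated enlargement by the strong maximal operator $M_s$. On the enlarged set, Cauchy--Schwarz together with $L^2$-boundedness of $S$ (see \eqref{S-function bd on Lp}) and the size normalization of an atom give $|\widetilde{\widetilde{\Omega}}|^{1/2}\|a\|_{L^2} \lesssim 1$. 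On the far region, decompose $a=\sum_{R}(L_1^M\otimes L_2^M)b_R$ and use Gaussian off-diagonal bounds for the derivatives of the heat kernels (estimate \eqref{time derivative of heat hernel}) to exchange each factor $t_i^{2M}L_i^M e^{-t_i^2L_i}$ for a decay factor $(\ell(I_R)/\mathrm{dist})^{4M}$ type expression in one variable and symmetrically in the other. Integrating in $t$ and summing over $R\in m(\Omega)$ with the help of Journé's lemma converts the geometric decay into a summable series indexed by the ``expansion factors'' $\gamma_1(R),\gamma_2(R)$; the hypothesis $M>\max\{n_1,n_2\}/4$ is exactly the threshold ensuring absolute convergence.

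For the reverse direction $H^1_{L_1,L_2} \hookrightarrow H^1_{L_1,L_2,at,M}$, I would start from the $L^2$-Calderón reproducing formula
$$f \;=\; c_\psi \iint_{0}^{\infty} \psi(t_1\sqrt{L_1})\otimes\psi(t_2\sqrt{L_2})\,\varphi(t_1\sqrt{L_1})\otimes\varphi(t_2\sqrt{L_2})\,f \,\frac{dt_1\,dt_2}{t_1\,t_2},$$
where $\varphi(z)=z^2 e^{-z^2}$ so that the right factor reproduces the square function in the definition \eqref{esf}, and $\psi\in\mathbb F(M+1)$ is chosen of the form $\psi(z)=z^{2(M+1)}\Phi(z)$ for a suitable even $\Phi$ whose Fourier transform is supported in $(-1,1)$, via Lemma~\ref{lemma finite speed 2}. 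Introduce the level sets $\Omega_k=\{Sf>2^k\}$ and their strong-maximal enlargements $\widetilde{\Omega}_k=\{M_s\chi_{\Omega_k}>1/2\}$, and split the $(t_1,t_2)$ domain as $\bigsqcup_k \bigl(T(\widetilde{\Omega}_k)\setminus T(\widetilde{\Omega}_{k+1})\bigr)$, and further according to the maximal dyadic sub-rectangles $R\in m(\widetilde{\Omega}_k)$. This produces a decomposition
$$f \;=\; \sum_{k\in\mathbb Z}\sum_{R\in m(\widetilde{\Omega}_k)} \lambda_{k,R}\, a_{k,R}, \qquad \lambda_{k,R}=C\,2^k|\widetilde{\Omega}_k|,$$
where each $a_{k,R}$ is a candidate $(H^1_{L_1,L_2},2,M)$-atom with $b_{k,R}$ built from the iterated spectral calculus at scales $t_i\sim\ell$(side of $R$). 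The $\ell^1$-sum of coefficients is controlled by $\sum_k 2^k|\widetilde{\Omega}_k|\lesssim \sum_k 2^k|\Omega_k|\lesssim \|Sf\|_{L^1}$.

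The main obstacle lies in verifying that each $a_{k,R}$ is actually an atom in the sense of Definition~\ref{def of product atom}. The delicate conditions are (ii), the strict support of $(L_1^{k_1}\otimes L_2^{k_2})b_{k,R}$ in $10R$, and the weighted $L^2$-norm bound in (iii). The support condition is where the choice of $\psi$ via $\Phi$ with compactly supported Fourier transform becomes indispensable: writing $\psi(t\sqrt{L_i})$ through Fourier inversion in terms of $\cos(s\sqrt{L_i})$ and invoking the finite-propagation speed property \eqref{e3.12}, together with the fact that the integration is restricted to $t_i$ comparable to the side lengths of $R$ in each direction, localizes the kernel action to a fixed multiple of $R$. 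The $L^2$-size condition (iii) is obtained from a tent-space $T^2\to L^2$ duality argument applied coordinate-wise, using the key quadratic estimate \eqref{e2.155} and the fact that on $T(\widetilde{\Omega}_k)\setminus T(\widetilde{\Omega}_{k+1})$ the relevant tent-space norm of $\varphi(t_1\sqrt{L_1})\otimes\varphi(t_2\sqrt{L_2})f$ is controlled by $2^k|\widetilde{\Omega}_k|^{1/2}$; passing through the atom's definition $a_{k,R}=(L_1^M\otimes L_2^M)b_{k,R}$ produces exactly the $\ell(I_R)^{-4M}\ell(J_R)^{-4M}$ weighting required.
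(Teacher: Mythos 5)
Your proposal takes essentially the same approach as the paper. For the direction $H^1_{L_1,L_2,at,M}\hookrightarrow H^1_{L_1,L_2}$, your near/far splitting via the doubly-enlarged set, the use of Cauchy--Schwarz and $L^2$-boundedness of $S$ on the near piece, and Gaussian heat-kernel decay plus Journ\'e's lemma on the far piece matches the paper's Lemma \ref{leAtom} in both structure and tools. For the reverse direction your strategy --- Calder\'on reproducing formula with $\psi$ chosen via Lemma \ref{lemma finite speed 2} so that its Fourier transform has compact support, finite propagation speed to localize supports, level sets of $Sf$ together with their strong-maximal enlargements to build the decomposition, and the quadratic estimate \eqref{e2.155} for the $L^2$-size condition --- is exactly what the paper does, except that the paper factors the level-set argument through the black-box tent-space atomic decomposition (Proposition \ref{prop-of-product tent space}, cited from \cite{F1} and \cite{DSTY}) and then composes with the operator $\pi_{L_1,L_2,M}$ (Lemma \ref{lemma-of-product operator-pi L M}), whereas you unfold it directly; these are the same proof, only modularized differently. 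Two small remarks: the coefficients in your decomposition should be indexed by $k$ alone, $\lambda_k = C\,2^k|\widetilde\Omega_k|$, with $a_k=\sum_{R\in m(\widetilde\Omega_k)}a_{k,R}$ the atom (your notation $\lambda_{k,R}$ suggests a per-rectangle coefficient, which would over-count); and the hypothesis $M>\max\{n_1,n_2\}/4$ is indeed used only in the forward direction, but it enters through the weighted size condition (iii) of Definition \ref{def of product atom} and the summability of the resulting decay, not as a sharp integrability threshold for a single off-diagonal integral --- your phrasing here is looser than what the calculation in the paper actually shows, but it is not a substantive error.
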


\medskip

\noindent
{\bf 3.2.\ Proof of Theorem~\ref{theorem of Hardy space atomic decom}.}
 We now proceed to the proof of Theorem~\ref{theorem of Hardy space atomic decom}. The basic strategy
 is as follows: by density, it is enough to show that when $
 M>\max({n_1\over 4},{n_2\over 4}),$

 $$\mathbb{H}^1_{L_1,L_2,at,M}(\mathbb{R}^{n_1}\times\mathbb{R}^{n_2})=
 H^1_{L_1,L_2 }(\mathbb{R}^{n_1}\times\mathbb{R}^{n_2})\cap
L^2(\mathbb{R}^{n_1}\times\mathbb{R}^{n_2})
 $$

 \noindent
 with equivalent of norms. The proof of this fact
 proceeds in two steps.

\medskip

 \noindent
 {\bf Step 1.} \ $\mathbb{H}^1_{L_1,L_2,at,M}(\mathbb{R}^{n_1}\times\mathbb{R}^{n_2})\subseteq
 H^1_{L_1,L_2}(\mathbb{R}^{n_1}\times\mathbb{R}^{n_2})\cap
L^2(\mathbb{R}^{n_1}\times\mathbb{R}^{n_2}),$  if
$M>\max({n_1\over 4},{n_2\over 4})$.

\medskip

 \noindent
 {\bf Step 2.} \ $
 H^1_{L_1,L_2}(\mathbb{R}^{n_1}\times\mathbb{R}^{n_2})\cap
L^2(\mathbb{R}^{n_1}\times\mathbb{R}^{n_2})\subseteq \mathbb{H}^1_{L_1,L_2,at,M}(\mathbb{R}^{n_1}\times\mathbb{R}^{n_2}),$  for every
$M\in{\mathbb N}$.

\medskip

We take these in order.
The conclusion of Step 1 is an immediate consequence of the following pair of Lemmas.

\medskip

\begin{lemma}\label{lemma-of-T-bd-on product Hp}
Fix $M\in{\mathbb N}$. Assume that $T$ is a linear
operator, or a non-negative sublinear operator, satisfying the
weak-type (2,2) bound

\begin{eqnarray*}
\big|\ \{x\in \mathbb{R}^{n_1}\times\mathbb{R}^{n_2}:
|Tf(x)|>\eta\}\ \big|\leq
C_T\eta^{-2}\|f\|_{L^2(\mathbb{R}^{n_1}\times\mathbb{R}^{n_2})}^2,\
\ \forall \eta>0,
\end{eqnarray*}

\noindent and that for every $(H^1_{L_1, L_2}, 2, M)$-atom $a$, we have

\begin{eqnarray}\label{e4.111}
\|Ta\|_{L^1(\mathbb{R}^{n_1}\times\mathbb{R}^{n_2})}\leq C
\end{eqnarray}

\noindent with constant $C$ independent of $a$. Then $T$ is bounded
from
$\mathbb{H}^1_{L_1,L_2,at,M}(\mathbb{R}^{n_1}\times\mathbb{R}^{n_2})$
to $L^1(\mathbb{R}^{n_1}\times\mathbb{R}^{n_2})$, and
$$ \|Tf\|_{L^1(\mathbb{R}^{n_1}\times\mathbb{R}^{n_2})}\leq C\|f\|_{\mathbb{H}^1_{L_1,L_2,at,M}(\mathbb{R}^{n_1}\times\mathbb{R}^{n_2})}. $$

\noindent Consequently, by density, $T$ extends to a bounded
operator from
$H^1_{L_1,L_2,at,M}(\mathbb{R}^{n_1}\times\mathbb{R}^{n_2})$ to
$L^1(\mathbb{R}^{n_1}\times\mathbb{R}^{n_2})$.
\end{lemma}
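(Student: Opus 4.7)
The plan is to follow the standard atomic decomposition argument. Fix $f\in\mathbb{H}^1_{L_1,L_2,at,M}(\mathbb{R}^{n_1}\times\mathbb{R}^{n_2})$ and an atomic representation $f=\sum_{j=0}^\infty\lambda_j a_j$, where the sum converges in $L^2(\mathbb{R}^{n_1}\times\mathbb{R}^{n_2})$ and $\sum_j|\lambda_j|<\infty$. First I would use the hypothesis \eqref{e4.111} to note that $\sum_j|\lambda_j|\,\|Ta_j\|_{L^1}\leq C\sum_j|\lambda_j|<\infty$, so the series $\sum_j\lambda_j\, Ta_j$ converges absolutely in $L^1(\mathbb{R}^{n_1}\times\mathbb{R}^{n_2})$ to some function $g$ with $\|g\|_{L^1}\leq C\sum_j|\lambda_j|$.

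Next, write $f_N:=\sum_{j=0}^N\lambda_j a_j$, so that $f_N\to f$ in $L^2$. The weak $(2,2)$ bound gives, for every $\eta>0$,
\begin{equation*}
\bigl|\{x:|Tf(x)-Tf_N(x)|>\eta\}\bigr|\leq C_T\eta^{-2}\|f-f_N\|_{L^2}^2\longrightarrow 0,
\end{equation*}
so $Tf_N\to Tf$ in measure (where in the sublinear case one uses $|Tf-Tf_N|\leq |T(f-f_N)|$). On the other hand $\sum_{j=0}^N\lambda_j\, Ta_j\to g$ in $L^1$, hence in measure. In the linear case the identity $Tf_N=\sum_{j=0}^N\lambda_j\, Ta_j$ is immediate, so uniqueness of limits in measure forces $Tf=g$ a.e., yielding
\begin{equation*}
\|Tf\|_{L^1}=\|g\|_{L^1}\leq C\sum_j|\lambda_j|.
\end{equation*}
In the sublinear case one instead uses $|Tf_N|\leq\sum_{j=0}^N|\lambda_j|\,|Ta_j|$ and passes to the limit to obtain $|Tf|\leq \lim_N\sum_{j=0}^N|\lambda_j|\,|Ta_j|$ a.e., whose $L^1$ norm is again bounded by $C\sum_j|\lambda_j|$.

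Taking the infimum over all atomic representations of $f$ gives $\|Tf\|_{L^1}\leq C\|f\|_{\mathbb{H}^1_{L_1,L_2,at,M}}$. The extension to the completion $H^1_{L_1,L_2,at,M}$ then follows from the standard density argument: $T$ is a bounded operator defined on the dense subspace $\mathbb{H}^1_{L_1,L_2,at,M}\subset H^1_{L_1,L_2,at,M}$ into the Banach space $L^1$, and therefore admits a unique bounded extension to the whole space.

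The only real subtlety, and the step I expect to require the most care, is the reconciliation of the two modes of convergence for $Tf_N$: convergence to $Tf$ in measure (from the weak $(2,2)$ estimate) and convergence of $\sum_{j\le N}\lambda_j\,Ta_j$ to $g$ in $L^1$. This is where the sublinearity hypothesis must be handled correctly, since we cannot appeal to the equality $T(\sum)=\sum T$; the remedy is to dominate $|Tf|$ pointwise a.e.\ by the $L^1$ limit of the partial sums of $|Ta_j|$, which is legitimate by Fatou's lemma together with the sublinearity inequality applied along the sequence $f_N$.
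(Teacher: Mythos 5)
Your argument is correct and is essentially the same as the paper's: both use the weak $(2,2)$ bound on the $L^2$-convergent tails of the atomic series to upgrade the termwise bound $\|Ta_j\|_{L^1}\le C$ to a pointwise a.e.\ domination $|Tf|\le\sum_j|\lambda_j|\,|Ta_j|$, then integrate and take an infimum over representations. The paper packages this slightly more compactly: setting $f^N=\sum_{j>N}\lambda_ja_j$, sublinearity gives $|Tf|-\sum_j|\lambda_j|\,|Ta_j|\le|Tf^N|$ for every $N$, so $\bigl|\{x:|Tf(x)|-\sum_j|\lambda_j|\,|Ta_j(x)|>\eta\}\bigr|\le\limsup_N C_T\eta^{-2}\|f^N\|_2^2=0$, which yields the pointwise inequality in one step for both the linear and sublinear cases; your version splits into cases and passes through convergence in measure plus an a.e.\ convergent subsequence, which is a little longer but equivalent.
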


\begin{proof}
Let
$f \in \mathbb{H}^1_{L_1,L_2,at,M}(\mathbb{R}^{n_1}\times\mathbb{R}^{n_2}) $, where $f = \sum \lambda_j a_j$
is an atomic $(H^1_{L_1, L_2}, 2, M)$-representation such that

$$
\|f\|_{ \mathbb{H}^1_{L_1,L_2,at,M}(\mathbb{R}^{n_1}\times\mathbb{R}^{n_2})} \approx  \sum_{j=0}^\infty |\lambda_j| .
$$

\noindent
 Since the sum converges in $L^2$
(by definition), and since $T$ is of weak type $(2,2)$, we have that
at almost every point,

\begin{equation}\label{eq4.44}|T(f)| \leq \sum_{j=0}^\infty |\lambda_j| \,|T(a_j)|.
\end{equation}

\noindent
Indeed, for every $\eta >0$, we have that, if $f^N:= \sum_{j>N} \lambda_j a_j$, then,

 \begin{eqnarray*}
\big|\ \{x: |Tf(x)| - \sum_{j=0}^\infty |\lambda_j| \,|T a_j(x)| >\eta\}\big|\,&\leq& \limsup_{N\to \infty}
\big|  \{x:  |Tf^N(x)|>\eta\}\big|\\
&\leq& \,C_T\,\,\eta^{-2} \,\limsup_{N\to \infty} \|f^N\|_2^2 =0,
\end{eqnarray*}
from which (\ref{eq4.44}) follows.
In turn, (\ref{eq4.44}) and (\ref{e4.111}) imply the desired $L^1$ bound for $Tf$.
\end{proof}

\medskip

\begin{lemma}\label{leAtom}
Let $a$ be an $(H^1_{L_1,L_2}, 2, M)$ atom with
$M>\max({n_1/4},{n_2/4})$.  Let $S$ denote  the square function
defined in \eqref{esf}. Then   for every $(H^1_{L_1, L_2}, 2, M)$-atom
$a$, we have

\begin{eqnarray}\label{e4.11}
\|Sa\|_{1}\leq C,
\end{eqnarray}

\noindent where $C$ is a positive constant independent of $a$.
\end{lemma}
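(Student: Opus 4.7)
The plan is to exploit the atomic structure by splitting the integration domain $\mathbb{R}^{n_1}\times\mathbb{R}^{n_2}$ into a suitable enlargement $\Omega^*$ of $\Omega$, chosen so that $|\Omega^*|\lesssim|\Omega|$, and its complement, and then to bound the two resulting pieces by different means. A convenient choice is to start with $\widetilde\Omega=\{x:M_s(\chi_\Omega)(x)>1/2\}$ and then enlarge further by the Journ\'e coefficients $\gamma_1(R),\gamma_2(R)$ attached to each $R\in m(\Omega)$, so that any point $x\notin\Omega^*$ lies outside a generously dilated version of every maximal rectangle $R$, while still satisfying $|\Omega^*|\lesssim|\Omega|$.

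For the near term $\int_{\Omega^*}Sa$, I would apply Cauchy--Schwarz, the $L^2$-boundedness of the area function from \eqref{S-function bd on Lp}, and the size bound $\|a\|_2\le|\Omega|^{-1/2}$ of Definition~\ref{def of product atom}(iii) to obtain
\[
\int_{\Omega^*}Sa(x)\,dx\le|\Omega^*|^{1/2}\|Sa\|_2\lesssim|\Omega|^{1/2}\|a\|_2\le C.
\]

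For the far term, write $a=\sum_{R\in m(\Omega)}a_R$ with $a_R=(L_1^M\otimes L_2^M)b_R$ and estimate $\int_{(\Omega^*)^c}Sa_R$ for each $R$ separately. The key algebraic identity is
\[
(t_1^2L_1 e^{-t_1^2L_1}\otimes t_2^2L_2 e^{-t_2^2L_2})a_R
= t_1^{-2M}t_2^{-2M}\bigl((t_1^2L_1)^{M+1}e^{-t_1^2L_1}\otimes(t_2^2L_2)^{M+1}e^{-t_2^2L_2}\bigr)b_R,
\]
where the kernel of each tensor factor $(t^2L)^{M+1}e^{-t^2L}$ enjoys the Gaussian bound $C\,t^{-n_i}\exp(-|x-y|^2/ct^2)$ obtained from \eqref{time derivative of heat hernel} with $k=M+1$. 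I would then split the integration over $\Gamma(x)$ into four regions according to the sizes of $t_1$ versus $\ell(I_R)$ and $t_2$ versus $\ell(J_R)$: in the ``small $t$'' regimes one exploits the $L^2$-bounds on $(\ell(I_R)^2L_1)^{k_1}\otimes(\ell(J_R)^2L_2)^{k_2}b_R$ furnished by Definition~\ref{def of product atom}(iii), while in the ``large $t$'' regimes the prefactors $t_1^{-2M}t_2^{-2M}$, together with the Gaussian decay and the fact that the support of $b_R$ lies in $10R$ while $x\notin\Omega^*$, produce negative powers of the expansion ratios $\gamma_1(R)$ and $\gamma_2(R)$.

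The main obstacle is twofold. First, one must simultaneously extract decay in both $\gamma_1(R)$ and $\gamma_2(R)$ from a tensor product of Gaussian kernels; this requires a careful two-parameter argument interleaving the area integral over $\Gamma(x)$ with the $L^2$ size bounds of the $b_R$, rather than the simpler one-parameter analogue. Second, the threshold $M>\max(n_1/4,n_2/4)$ must be shown to be exactly what is needed to render the resulting bound $\int_{(\Omega^*)^c}Sa_R\lesssim |R|\,|\Omega|^{-1}\gamma_1(R)^{-\delta}\gamma_2(R)^{-\delta}$ summable in $R$: the exponent $4M$ in Definition~\ref{def of product atom}(iii) yields $2M$ of $L^2$ gain in each direction, which must beat the $n_i/2$ loss coming from the Gaussian convolution. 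Once this bound is in hand, summing over $R\in m(\Omega)$ and invoking Journ\'e's lemma in both directions (using $m(\Omega)\subseteq m_1(\Omega)\cap m_2(\Omega)$ to reduce to the two inequalities stated earlier) yields $\int_{(\Omega^*)^c}Sa\lesssim 1$, which combined with the near-piece estimate gives \eqref{e4.11}.
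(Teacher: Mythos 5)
Your overall architecture (near-piece by Cauchy--Schwarz and $\|a\|_2\le|\Omega|^{-1/2}$; far-piece by writing $a_R=(L_1^M\otimes L_2^M)b_R$, using the time-derivative heat-kernel bound \eqref{time derivative of heat hernel}, splitting in $t_i$ against $\ell(I)$, $\ell(J)$, and finally summing with Journ\'e's lemma) does coincide with the paper's plan in broad outline, and the identification of $M>\max(n_1/4,n_2/4)$ as the threshold that balances the $4M$ gain from Definition~\ref{def of product atom}(iii) against the Gaussian loss is correct. However, there is a genuine gap in how you treat the far region, and a related problem with the set $\Omega^*$.

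\textbf{The far region is not ``far in both variables.''} The complement of any union of dilated rectangles $\bigcup\widetilde R$ contains many points $(x_1,x_2)$ that are far from $R$ in the first coordinate but sit right on top of $J$ in the second coordinate (and vice versa). For such points the Gaussian factor $\exp(-|x_2-y_2|^2/ct_2^2)$ gives you nothing, so your claimed pointwise bound
\[
\int_{(\Omega^*)^c}Sa_R\lesssim |R|\,|\Omega|^{-1}\gamma_1(R)^{-\delta}\gamma_2(R)^{-\delta}
\]
is simply not available; there is no $\gamma_2$-decay to be had on the slab $(100\widetilde I)^c\times 100J$. The paper therefore splits the far region into $D=\sum_R\int_{(100\widetilde I)^c\times\mathbb R^{n_2}}$ and $E=\sum_R\int_{\mathbb R^{n_1}\times(100\widetilde J)^c}$, and within $D$ separates the mixed piece $D_1$ (far in $x_1$, near in $x_2$) from $D_2$ (far in both). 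The mixed term $D_1$ requires a genuinely different argument from the one you sketch: one writes $a_R=(L_1^M\otimes\mathbb 1_2)a_{R,2}$ with $a_{R,2}=(\mathbb 1_1\otimes L_2^M)b_R$, applies the $L^2$-boundedness of the \emph{one-variable} area integral in the $x_2$-slot (which absorbs the $t_2$-integration without any spatial decay in $x_2$), and extracts Gaussian decay only in $x_1$. This gives $D_1\lesssim |R|^{1/2}\gamma_1(R)^{-1/2}\ell(I)^{-2M}\ell(J)^{-2M}\|(\mathbb 1_1\otimes(\ell(J)^2L_2)^M)b_R\|_2$, with only $\gamma_1$-decay. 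The final summation then invokes Journ\'e's lemma in one direction at a time, once for $D$ (with $\gamma_1$) and once for $E$ (with $\gamma_2$), rather than a joint $\gamma_1^{-\delta}\gamma_2^{-\delta}$ estimate.

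\textbf{The set $\Omega^*$ you describe need not be small.} Dilating each $R$ by factors $\gamma_1(R)$ and $\gamma_2(R)$ and taking the union gives a set of measure controlled by $\sum_R|R|\gamma_1(R)\gamma_2(R)$, and Journ\'e's lemma controls sums with $\gamma_i^{-\delta}$, not $\gamma_i^{+1}$; there is no reason for your $\Omega^*$ to satisfy $|\Omega^*|\lesssim|\Omega|$. The paper instead takes $\widetilde R=100(\widetilde I\times\widetilde J)$ where $\widetilde I$ is the largest dyadic cube with $\widetilde I\times J\subset\widetilde\Omega$ and $\widetilde J$ the largest with $\widetilde I\times\widetilde J\subset\widetilde{\widetilde\Omega}$; then $\bigcup\widetilde R\subset 100\,\widetilde{\widetilde\Omega}$ and the strong maximal theorem gives $|\bigcup\widetilde R|\lesssim|\Omega|$. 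This construction is not the same as dilating by $\gamma_1,\gamma_2$, and the distinction matters precisely because it keeps $\Omega^*$ of controlled measure while still guaranteeing that points outside lie far from $R$ in \emph{at least one} direction --- which is all one can ask.

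In short: your near-piece estimate and your treatment of the doubly-far region $D_2$ are essentially the paper's, but you have collapsed the entire far region into the doubly-far case, thereby skipping the mixed terms $D_1$, $E_1$ that form the technical core of the lemma and require the one-variable square-function $L^2$ bound applied on one factor only.
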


\medskip

\begin{proof} Indeed, given Lemma~\ref{leAtom}, we may apply Lemma~\ref{lemma-of-T-bd-on product Hp} with $T=S$ to obtain

$$
\|f\|_{H^1_{L_1,
L_2}(\mathbb{R}^{n_1}\times\mathbb{R}^{n_2})}=\|Sf\|_{L^1(\mathbb{R}^{n_1}\times\mathbb{R}^{n_2})}\leq
C\|f\|_{\mathbb{H}^1_{L_1,L_2,at,M}(\mathbb{R}^{n_1}\times\mathbb{R}^{n_2})},
$$

\noindent
which Step 1 follows.

To finish Step 1, it therefore suffices to   verify   estimate (\ref{e4.11}). To see this, we need to apply the
Journ\'e's covering lemma. For any $(H_{L_1,L_2}^1,2,M)$-atom $a$, suppose that
 $
a=\sum\limits_{R\in m(\Omega)} a_R
$  is supported in an open set $\Omega$ with finite measure.
For any $R=I \times J\in m(\Omega)$, let $\widetilde{I}$ be the
biggest dyadic cube containing  $I$, so that $\widetilde{I}\times
J\subset\widetilde{\Omega}$, where
$\widetilde{\Omega}=\{x\in\mathbb{R}^{n_1}\times\mathbb{R}^{n_2}:\
M_s(\chi_{\Omega})(x)>1/2\}$. Next, let $\widetilde{J}$ be the
biggest dyadic cube containing $J$, so that $\widetilde{I}\times
\widetilde{J}\subset\widetilde{\widetilde{\Omega}}$, where
$\widetilde{\widetilde{\Omega}}=\{x\in\mathbb{R}^{n_1}\times\mathbb{R}^{n_2}:\
M_s(\chi_{\widetilde{\Omega}})(x)>1/2\}$. Now let $\widetilde{R}$ be
the 100-fold dilate of $\widetilde{I}\times \widetilde{J}$ concentric with
$\widetilde{I}\times \widetilde{J}$. Clearly, an application of the
strong maximal function theorem shows that
$\big|\cup_{R\subset\Omega} \widetilde{R}\big|\leq
C|\widetilde{\widetilde{\Omega}}|\leq C|\widetilde{\Omega}|\leq
C|\Omega|$. From the property (iii) of the
$(H_{L_1,L_2,}^1,2,M)$-atom,

\begin{eqnarray}\label{SL alpha uniformly bd on inside of Omega   }
\int_{\cup \widetilde{R}}|S(a)(x_1,x_2)|dx_1dx_2 &\leq&
 |\cup\widetilde{R} |^{1/2}\|S(a)\|_{L^2(\mathbb{R}^{n_1}\times\mathbb{R}^{n_2})}\nonumber\\[2pt]
 &\leq&  C |\Omega|^{1/2}\|a\|_{L^2(\mathbb{R}^{n_1}\times\mathbb{R}^{n_2})}\\[2pt]
 &\leq&    C |\Omega|^{1/2}|\Omega|^{-1/2}
 \leq  C.\nonumber
\end{eqnarray}

\noindent
We now prove

\begin{eqnarray}\label{SL alpha uniformly bd on outside of Omega   }
\int_{(\bigcup \widetilde{R})^c}|S(a)(x_1,x_2)|dx_1dx_2\leq C.
\end{eqnarray}

\noindent
From the definition of $a$, we write

\begin{eqnarray}\label{DE}
\int_{(\bigcup \widetilde{R})^c}|S(a)(x_1,x_2)|dx_1dx_2 &\leq&
\sum_{R\in
m(\Omega) } \int_{\widetilde{R}^c}|S(a_R)(x_1,x_2)|dx_1dx_2\nonumber\\[3pt]
&\leq &\sum_{R\in m(\Omega) }
 \int_{(100\widetilde{I})^c\times\mathbb{R}^{n_2}} |S(a_R)(x_1,x_2)|dx_1dx_2\nonumber\\[3pt]
&&+ \sum_{R\in m(\Omega) }
 \int_{\mathbb{R}^{n_1}\times
(100\widetilde{J})^c}  |S(a_R)(x_1,x_2)|dx_1dx_2\nonumber\\[3pt]
&=& D+E.
\end{eqnarray}

\noindent
For the term $D$, we have
\begin{eqnarray*}\label{D1D2}
\int_{(100\widetilde{I})^c\times\mathbb{R}^{n_2}}|S(a_R)(x_1,x_2)|dx_1dx_2
 &  =&
\int_{(100\widetilde{I})^c\times 100J} |S(a_R)(x_1,x_2)|dx_1dx_2 \nonumber\\[2pt]
&&+  \int_{(100\widetilde{I})^c\times (100J)^c} |S(a_R)(x_1,x_2)|dx_1dx_2 \nonumber\\[2pt]
&=& D_1+D_2.
\end{eqnarray*}

\noindent Let us first estimate the term $D_1$.  Set $a_{R,2}=(1\!\!1_1\otimes L_2^M)b_R$, that is,
$
a_R= (L_1^M\otimes 1\!\!1_2)a_{R,2}.
$    Using H\"older's
inequality,

\begin{eqnarray}\label{estimate D1} \ \ \
D_1
&\leq& C |J|^{{1/2}} \int_{(100\widetilde{I})^c }\Big( \int_{100J}|S(a_R)(x_1,x_2)|^2dx_2\Big)^{1/2}dx_1.
\end{eqnarray}

\noindent
We will show that there exists a constant $C>0$ such that for any $x_1\not\in 100\widetilde{I},$

\begin{eqnarray}\label{estimate D11} \ \ \
 &&\hspace{-1.5cm}\int_{100J}|S(a_R)(x_1,x_2)|^2dx_2\nonumber\\
 &\leq& C{|I|^{1/n_1+1} \over |x_1-x_I|^{2n_1+1}} \ell(I)^{-4M}\ell(J)^{-4M}
\|(1\!\!1_1\otimes
(\ell(J)^2L_2)^M)b_{R}\|^2_{L^2(\mathbb{R}^{n_1}\times\mathbb{R}^{n_2})},
\end{eqnarray}

\noindent
where $(x_I, x_J)$ denotes the center of  $R=I\times J$.

\noindent

Let us verify (\ref{estimate D11}). From the definition of the $S$-function and the $L^2$-boundedness of
the area function of the  one-parameter,

\begin{eqnarray}\label{ed222}\hspace{1cm}
&&\hspace{-1.2cm}\int_{100J}|S(a_R)(x_1,x_2)|^2dx_2\nonumber\\[4pt]
 &\leq& \int_{\Gamma_1(x_1)}\bigg[
\int_{\mathbb{R}^{n_2}}\int_{\Gamma_2(x_2)}\big|t_2^2L_2e^{-t_2^2L_2}\big(t_1^2L_1e^{-t_1^2L_1}
a_{R}(y_1,\cdot)\big)(y_2)\big|^2{dy_2dt_2\over
t_2^{n_2+1}}dx_2 \bigg]{dy_1dt_1\over t_1^{n_1+1}}\nonumber\\[4pt]
 &\leq& C\int_{\Gamma_1(x_1)}
\int_{\mathbb{R}^{n_2}}\big|t_1^2L_1e^{-t_1^2L_1}
a_{R}(y_1,x_2)\big|^2dx_2{dy_1dt_1\over t_1^{n_1+1}}\nonumber\\[4pt]
&\leq& C
\int_{\mathbb{R}^{n_2}}\int_{\Gamma_1(x_1)} \Big|(t_1^2L_1)^{M+1}e^{-t_1^2L_1}
a_{R,2}(y_1,x_2)\Big|^2{dy_1dt_1\over t_1^{n_1+1+4M}}dx_2,
\end{eqnarray}

\noindent
where the last inequality follows from   the equality  $
a_R= (L_1^M\otimes 1\!\!1_2)a_{R,2}$.  Note that  supp$a_{R,2}\subset$ supp$ (1\!\!1_1\otimes L_2^M)b_R\subset
10R=10(I\times J)$. We then apply the time derivatives  (\ref{time derivative of heat hernel})  of the kernel
$p_t(x_i, y_i)$ to obtain

 \begin{eqnarray*}
 &&\hspace{-1.2cm}\mbox{RHS of (\ref{ed222})} \\[2pt]
 &\leq& C  \int_{\mathbb{R}^{n_2}}
 \int_0^{\ell(I)} \int_{|x_1-y_1|<t_1} \Big[\int_{10I}  t_1^{-n_1} \exp\Big(-{|y_1-z_1|^2\over ct_1^2}\Big)
|a_{R,2}(z_1,x_2)|dz_1\Big]^2 {dy_1dt_1\over t_1^{n_1+1+4M}} dx_2\nonumber \\[4pt]
&&+ C  \int_{\mathbb{R}^{n_2}}
 \int_{\ell(I)}^\infty
 \int_{|x_1-y_1|<t_1} \Big[\int_{10I}  t_1^{-n_1} \exp\Big(-{|y_1-z_1|^2\over ct_1^2}\Big)
|a_{R,2}(z_1,x_2)|dz_1\Big]^2 {dy_1dt_1\over t_1^{n_1+1+4M}} dx_2\nonumber \\[4pt]
 &=:&   D_{1} (a_R)(x_1) +D_{2} (a_R)(x_1).\nonumber
\end{eqnarray*}

\noindent




Let us estimate the term $D_{1} (a_R)(x_1)$. Note that if $x_1\not\in 100\widetilde{I},
$ $0<t_1<\ell(I)$, $|x_1-y_1|<t_1$ and $z_1\in 10I$, then $|y_1-z_1|\geq |x_1-x_I|/2$.
 We use the fact that $e^{-s}\leq Cs^{-k}$  for any $k>0$ to obtain

\begin{eqnarray*}
  D_{1} (a_R)(x_1)&\leq& C
 \int_0^{\ell(I)} \int_{|x_1-y_1|<t_1}dy_1\ \ t_1^{-2n_1} \exp\Big(-{2|x_1-x_I|^2\over ct_1^2}\Big)  {dt_1\over t_1^{n_1+1+4M}}\\
 &&\times \int_{\mathbb{R}^{n_2}}\Big[\int_{10I}
|a_{R,2}(z_1,x_2)|dz_1\Big]^2  dx_2\\
  &\leq&
 C|I|\int_0^{\ell(I)} t_1^{-2n_1} \exp\Big(-{2|x_1-x_I|^2\over ct_1^2}\Big) {dt_1\over t_1^{1+4M}} \|a_{R,2}\|^2_{L^2(\mathbb{R}^{n_1}\times\mathbb{R}^{n_2})}\\
&\leq& C|I|
 \int_0^{\ell(I)} t_1^{-2n_1-4M-1} \Big({ t_1\over |x_1-x_I|}\Big)^{2(n_1+2M +{1\over 2} )}
   {dt_1 } \|a_{R,2}\|^2_{L^2(\mathbb{R}^{n_1}\times\mathbb{R}^{n_2})}\\
   &\leq& C|I|
 {\ell(I) \over |x_1-x_I|^{2(n_1+2M +{1\over 2})}}
   \|(1\!\!1_1\otimes
 L_2^M)b_{R}\|^2_{L^2(\mathbb{R}^{n_1}\times\mathbb{R}^{n_2})}\\
&\leq& C  {|I|^{1/n_1 +1}  \over |x_1-x_I|^{2n_1+1}} \ell(I)^{-4M}\ell(J)^{-4M}
\|(1\!\!1_1\otimes
(\ell(J)^2L_2)^M)b_{R}\|^2_{L^2(\mathbb{R}^{n_1}\times\mathbb{R}^{n_2})},
\end{eqnarray*}

\noindent
which is of the right order. In order to estimate the second term $D_{2} (a_R)$, observe that
  if $x_1\not\in 100\widetilde{I},
$ $\ell(I)\leq t_1 <|x_1-x_I|/4$, $|x_1-y_1|<t_1$ and $z_1\in 10I$, then $|y_1-z_1|\geq  |x_1-x_I|/4$.
Hence,

\begin{eqnarray*}
 &&\hspace{-0.6cm}D_{2} (a_R)(x_1) \nonumber \\[2pt]
 &\leq&
C\int_{\mathbb{R}^{n_2}}
 \int_{\ell(I)}^{|x_1-x_I|\over 4} \int_{|x_1-y_1|<t_1} \Big[\int_{10I}  t_1^{-n_1} \exp\Big(-{|y_1-z_1|^2\over ct_1^2}\Big)
|a_{R,2}(z_1,x_2)|dz_1\Big]^2 {dy_1dt_1\over t_1^{n_1+1+4M}} dx_2\nonumber \\[2pt]
&&+
C\int_{\mathbb{R}^{n_2}}
 \int_{|x_1-x_I|\over 4}^\infty
 \int_{|x_1-y_1|<t_1} \Big[\int_{10I}  t_1^{-n_1} \exp\Big(-{|y_1-z_1|^2\over ct_1^2}\Big)
|a_{R,2}(z_1,x_2)|dz_1\Big]^2 {dy_1dt_1\over t_1^{n_1+1+4M}} dx_2\nonumber \\[2pt]
 &\leq&  C|I| \bigg(
 \int_{\ell(I)}^{\infty} t_1^{-2n_1-1-4M} \Big({ t_1\over |x_1-x_I|}\Big)^{2(n_1+2M -{1\over 2})} {dt_1 }
 +  \int_{|x_1-x_I|\over 4}^{\infty} t_1^{-2n_1-1-4M}    dt_1 \bigg)
 \|a_{R,2}\|^2_{L^2(\mathbb{R}^{n_1}\times\mathbb{R}^{n_2})}\\[2pt]
&\leq& C  {|I|^{1/n_1+1} \over |x_1-x_I|^{2n_1+1}} \ell(I)^{-4M}\ell(J)^{-4M}
\|(1\!\!1_1\otimes
(\ell(J)^2L_2)^M)b_{R}\|^2_{L^2(\mathbb{R}^{n_1}\times\mathbb{R}^{n_2})}.
\end{eqnarray*}

\noindent

\noindent
Combining the estimates of $D_{1} (a_R)(x_1)$ and  $D_{2} (a_R)(x_1)$, estimate (\ref{estimate D11})
 follows readily. Putting (\ref{estimate D11}) into the term $D_1$ in (\ref{estimate D1}),
  we    have

\begin{eqnarray*}
 D_1 &\leq& C|R|^{1/2} \int_{(100\widetilde{I})^c } {|I|^{1/2n_1 } \over |x_1-x_I|^{n_1+1/2}}  dx_1  \ell(I)^{-2M}\ell(J)^{-2M}
\|(1\!\!1_1\otimes
(\ell(J)^2L_2)^M)b_{R}\|_{L^2(\mathbb{R}^{n_1}\times\mathbb{R}^{n_2})}\\ [3pt]
&\leq&C|R|^{1/2} \gamma_1(R)^{-1/2}  \ell(I)^{-2M}\ell(J)^{-2M}
\|(1\!\!1_1\otimes
(\ell(J)^2L_2)^M)b_{R}\|_{L^2(\mathbb{R}^{n_1}\times\mathbb{R}^{n_2})}.
\end{eqnarray*}

Now we turn to estimate the term $D_2$. Note that  $
a_R=(L_1^M\otimes L_2^M)b_R
$ and supp $b_R\subset 10R=10(I\times J)$.   One can write

\begin{eqnarray*}
&&\hspace{-0.6cm}\big(S(a_R)(x_1,x_2)\big)^2\\[5pt]
&&\
=\int\limits_{\Gamma_1(x_1)}\int\limits_{\Gamma_2(x_2)}\Big|\big((t_1^2L_1)^{M+1}e^{-t_1^2L_1}\otimes
(t_2^2L_2)^{M+1}e^{-t_2^2L_2}\big) b_R (y_1,y_2)\Big|^2 {dy_1dt_1\over
t_1^{n_1+4M+1}}{dy_2dt_2\over t_2^{n_2+4M+1}}\\[2pt]
&&\ \leq C\bigg( \int_0^{\ell(I)}\!\!\int_0^{\ell(J)} +
\int_0^{\ell(I)}\!\!\int_{\ell(J)}^{\infty}+
\int_{\ell(I)}^{\infty}\int_0^{\ell(J)} +\int_{\ell(I)}^{\infty}\int_{\ell(J)}^{\infty} \Big)
\int_{|x_1-y_1|<t_1} \int_{|x_2-y_2|<t_2}\bigg[\int_{10I} \int_{10J}\\[2pt]
&& \hspace{0.5cm}  t_1^{-n_1} \exp\Big(-{|y_1-z_1|^2\over ct_1^2}\Big)
   t_2^{-n_2} \exp\Big(-{|y_2-z_2|^2\over ct_2^2}\Big)
|b_{R}(z_1,z_2)|dz_1 dz_2 \bigg]^2
{dy_1dt_1\over
t_1^{n_1+4M+1}}{dy_2dt_2\over t_2^{n_2+4M+1}}\\[2pt]
&&\
=: \sum_{i=1}^4 D_{2i}(b_R)(x_1, x_2).
\end{eqnarray*}

\noindent  By using an argument as in $D_{1}(a_R)$ and $D_{2}(a_R)$ above, together with H\"older inequality and
elementary integration, we can show  that  for every $i=1,2,3, 4,$

\begin{eqnarray*}
D_{2i}(b_R)(x_1, x_2)&\leq& C |R|  {|I|^{1/n_1} \over |x_1-x_I|^{2n_1+1}} \times {|J|^{1/n_2} \over |x_2-x_J|^{2n_2+1}}
  \ell(I)^{-4M}\ell(J)^{-4M} \\[3pt]
&& \hspace{1cm}\times \big\|((\ell(I)^2L_1)^M\otimes
(\ell(J)^2L_2)^M)b_{R}\big\|^2_{L^2(\mathbb{R}^{n_1}\times\mathbb{R}^{n_2})},
\end{eqnarray*}

\noindent which gives
\begin{eqnarray*}
D_2 &\leq&  \int_{(100\widetilde{I})^c\times (100J)^c} |S(a_R)(x_1,x_2)|dx_1dx_2 \nonumber\\[4pt]
&\leq& C |R|^{1/2} \gamma_1(R)^{-1/2}
  \ell(I)^{-2M}\ell(J)^{-2M}   \|((\ell(I)^2L_1)^M\otimes
(\ell(J)^2L_2)^M)b_{R}\|_{L^2(\mathbb{R}^{n_1}\times\mathbb{R}^{n_2})}.
\end{eqnarray*}

\noindent
Estimates of $D_1$ and $D_2$, together with
  H\"older's inequality and Journ\'e's covering lemma, shows that

\begin{eqnarray*}
D&\leq& \sum_{R\in m(\Omega) }
 \int_{(100\widetilde{I})^c\times\mathbb{R}^{n_2}} |S(a_R)(x_1,x_2)|dx_1dx_2\nonumber\\[3pt]
&\leq& \sum_{R\in m(\Omega) }|R|^{1/2} \gamma_1(R)^{-1/2}
  \ell(I)^{-2M}\ell(J)^{-2M}
 \\
&&\hspace{0.4cm} \times\Big( \|((\ell(I)^2L_1)^M\otimes
(\ell(J)^2L_2)^M)b_{R}\|_{L^2(\mathbb{R}^{n_1}\times\mathbb{R}^{n_2})}
+  \|(1\!\!1_1\otimes
(\ell(J)^2L_2)^M)b_{R}\|_{L^2(\mathbb{R}^{n_1}\times\mathbb{R}^{n_2})}\Big)\\[3pt]
&\leq& C\Big(\sum_{R\in m(\Omega) }|R|  \gamma_1(R)^{-1} \Big)^{1/2}
  \Big(\sum_{R\in m(\Omega) } \ell(I)^{-4M}\ell(J)^{-4M}
 \\
&&\hspace{0.4cm} \times\Big( \|((\ell(I)^2L_1)^M\otimes
(\ell(J)^2L_2)^M)b_{R}\|^2_{L^2(\mathbb{R}^{n_1}\times\mathbb{R}^{n_2})}
+  \|(1\!\!1_1\otimes
(\ell(J)^2L_2)^M)b_{R}\|^2_{L^2(\mathbb{R}^{n_1}\times\mathbb{R}^{n_2})}\Big)\Big)^{1/2}\\[3pt]
&\leq& C |\Omega|^{{1\over 2}} |\Omega|^{-{1\over 2}}
 \leq  C.
\end{eqnarray*}

 Similarly, we have that $E\leq C,$ and then the desired estimate of
(\ref{SL alpha uniformly bd on outside of Omega   }) follows readily. This,
together with (\ref{SL alpha uniformly bd on inside of Omega   }),
yields (\ref{e4.11}). This concludes Step 1.
\end{proof}

\medskip

 We now turn to Step 2.
Our goal is to show that every
 $f\in H_{L_1,L_2}^1(\mathbb{R}^{n_1}\times\mathbb{R}^{n_2})\cap
L^2(\mathbb{R}^{n_1}\times\mathbb{R}^{n_2})$ has a $(H^1_{L_1, L_2},
2, M)$ atom representation, with appropriate quantitative control of
the coefficients.  To this end, we follow the standard tent space
approach.

  Let us   recall some basic facts from \cite{AM, GM} on product
domains. First, for $1\leq p<\infty$, the tent spaces on ${\Bbb R}^{n_1}\times  {\Bbb R}^{n_2}$ are defined by

\begin{eqnarray*}
T^{p,2}({\Bbb
R}^{n_1}\times  {\Bbb R}^{n_2}):=\Big\{ F: {\Bbb R}^{n_1+1}_+\times  {\Bbb R}^{n_2+1}_+\rightarrow {\mathbb C};\
\|F\|_{T^{p,2}({\Bbb R}^{n_1}\times  {\Bbb R}^{n_2})}:
=\|\mathcal{A}(F)\|_{L^p({\Bbb R}^{n_1}\times  {\Bbb R}^{n_2})}
<\infty \Big\},
\end{eqnarray*}

\noindent
where

$$
\mathcal{A}F(x)=\bigg(\iint_{\Gamma(x)}|F(y,t)|^2\frac{dydt}
{t_1^{n_1+1}t_2^{n_2+1}}\bigg)^{1/2}.
$$

 \noindent
The tent space $T^{p,2}({\Bbb R}^{n_1}\times  {\Bbb R}^{n_2})$ is
defined as the space of functions $F$ such that ${\mathcal A}(F)\in
L^p({\Bbb R}^{n_1}\times  {\Bbb R}^{n_2})$ when $0<p<\infty$. The
resulting equivalences classes are then equipped with the norm,
$\|F\|_{T^{p,2}({\Bbb R}^{n_1}\times  {\Bbb
R}^{n_2})}=\|\mathcal{A}(F)\|_{L^p({\Bbb R}^{n_1}\times  {\Bbb
R}^{n_2})}.$

It has been proved in \cite{F1} and \cite{DSTY} that every $F\in
T^{1,2}({\mathbb R}\times\mathbb{R})$ has an atomic decomposition. It
is easy to generalize to the case
$T^{p,2}(\mathbb{R}^{n_1}\times\mathbb{R}^{n_2})$. For further
reference, we record this result below.

\begin{definition}\label{deftent}
A function $a(x,t)$ is called a $T^{1,2}$-atom, if there exists an
open set $\Omega\subset {\mathbb R}^{n_1}\times\mathbb{R}^{n_2}$ of
finite measure satisfying the following properties:

\medskip

\noindent   {\rm i)}  \ $a(x,t)$ can be further decomposed as $a=\sum_{R\in
m({\Omega})}a_R$, where each $a_R$ is supported in $T(3R)$, and
$R\subset {\Omega}$ (say, $R=I\times J$ in the sum)  is a maximal
dyadic subrectangle of $\Omega;$

\medskip
\noindent
 {\rm ii)}  $ \|a\|_{L^2(dydt/(t_1t_2))}\leq |\Omega|^{-{1\over
2}} \ \ {\rm and}\ \ \sum_{R\in
m({\Omega})}\|a_R\|^2_{L^2(dydt/(t_1t_2))}\leq |{\Omega}|^{-1}. $

\end{definition}

\medskip

It turns out, as in the one parameter case, we have the following proposition.

\medskip

\begin{prop}\label{prop-of-product tent space}
For every element $F\in
T^{1,2}(\mathbb{R}^{n_1}\times\mathbb{R}^{n_2})$, there exist a
numerical sequence $\{\lambda_j\}_{j=0}^\infty\in\ell^1$ and a
sequence of $T^{1,2}$-atoms $\{A_j\}_{j=0}^\infty$ such that

\begin{eqnarray}\label{atom decomposition for product tent space}
F=\sum_j\lambda_jA_j\ \ \ {\rm in}\
T^{1,2}(\mathbb{R}^{n_1}\times\mathbb{R}^{n_2}) \ {\rm and}\ a.e.\
{\rm in}\ \mathbb{R}^{n_1+1}_+\times\mathbb{R}^{n_2+1}_+.
\end{eqnarray}

\noindent Moreover,
$$ \sum_{j=0}^\infty|\lambda_j|\thickapprox \|F\|_{T^{1,2}(\mathbb{R}^{n_1}\times\mathbb{R}^{n_2})}, $$

\noindent where the implicit constants depend only on $n_1$ and
$n_2$.

Finally, if $F\in
T^{1,2}(\mathbb{R}^{n_1}\times\mathbb{R}^{n_2})\cap
T^{2,2}(\mathbb{R}^{n_1}\times\mathbb{R}^{n_2})$, then the
decomposition (\ref{atom decomposition for product tent space}) also
converges in $T^{2,2}(\mathbb{R}^{n_1}\times\mathbb{R}^{n_2})$.
\end{prop}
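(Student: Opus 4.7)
The plan is to adapt the classical Coifman--Meyer--Stein tent space atomic decomposition to the product setting, using the strong maximal function enlargement $\widetilde{\Omega}$ introduced in Section~3.1 in place of the one-parameter Hardy--Littlewood enlargement. The starting data are the dyadic level sets of the area function: for each $k\in\mathbb{Z}$ set
$$\mathcal{O}_k=\big\{x\in\mathbb{R}^{n_1}\times\mathbb{R}^{n_2}:\mathcal{A}(F)(x)>2^k\big\},$$
which have finite measure because $\mathcal{A}(F)\in L^1$, and form the enlargements $\widetilde{\mathcal{O}_k}$, for which $|\widetilde{\mathcal{O}_k}|\leq C|\mathcal{O}_k|$ by the $L^2$-boundedness of $M_s$.

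First I would write $F=\sum_{k\in\mathbb{Z}}F_k$ with $F_k:=F\cdot\chi_{T(\widetilde{\mathcal{O}_k})\setminus T(\widetilde{\mathcal{O}_{k+1}})}$, and use the standard tent-space Fubini identity to control
$$\|F_k\|^2_{L^2(dy\,dt/(t_1t_2))}\leq C\int_{(\widetilde{\mathcal{O}_{k+1}})^c}|\mathcal{A}(F)(x)|^2\,dx\leq C\,2^{2k}|\widetilde{\mathcal{O}_k}|.$$
Next, for each $k$ I would enumerate the maximal dyadic subrectangles $R\in m(\widetilde{\mathcal{O}_k})$ and further decompose $F_k=\sum_{R\in m(\widetilde{\mathcal{O}_k})}F_{k,R}$ with $F_{k,R}$ supported in $T(3R)$; the geometric fact behind the $T(3R)$ localization is that any $(y,t)$ in the difference tent has its defining rectangle $R_{y,t}\subset\widetilde{\mathcal{O}_k}$, hence contained in the $3$-dilate of the unique maximal dyadic subrectangle of $\widetilde{\mathcal{O}_k}$ housing $R_{y,t}$.

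I would then normalize by $\lambda_k:=C_{n_1,n_2}\,2^k|\widetilde{\mathcal{O}_k}|$, $A_k:=\lambda_k^{-1}F_k$, so that $A_k$ is a $T^{1,2}$-atom in the sense of Definition~\ref{deftent} with supporting set $\widetilde{\mathcal{O}_k}$: condition (i) is built into the construction, and condition (ii) follows from the Fubini bound above applied both globally to $F_k$ and rectangle-by-rectangle to $F_{k,R}$. Summing, $\sum_k|\lambda_k|\lesssim\sum_k 2^k|\mathcal{O}_k|\approx\|\mathcal{A}(F)\|_{L^1}=\|F\|_{T^{1,2}}$, while the reverse inequality is sublinearity of $\mathcal{A}$ applied to the atomic series. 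Convergence in $T^{1,2}$ follows from dominated convergence on the tails, and when $F\in T^{2,2}$ the partial-sum tails satisfy $\|\sum_{|k|>N}F_k\|_{T^{2,2}}^2\leq C\int_{\bigcup_{|k|>N}\mathcal{O}_k\setminus\mathcal{O}_{k+1}}|\mathcal{A}(F)|^2\,dx\to 0$, giving convergence in $T^{2,2}$ as well.

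The main obstacle is the tent inclusion $T(\widetilde{\mathcal{O}_k})\setminus T(\widetilde{\mathcal{O}_{k+1}})\subset\bigcup_{R\in m(\widetilde{\mathcal{O}_k})}T(3R)$ in the product setting; unlike the one-parameter case, where a single Hardy--Littlewood enlargement suffices, here one must exploit that $\widetilde{\mathcal{O}_k}$ is built from $M_s$ so that the maximal dyadic product subrectangles and their $3$-dilates interact well with the tent construction. A secondary technical point is that the dilates $T(3R)$ do overlap across different $R$, but bounded overlap of $3$-dilates of maximal dyadic product rectangles costs only a dimensional constant, which is absorbed into $C_{n_1,n_2}$.
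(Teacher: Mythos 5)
The paper's own proof simply cites \cite{F1} (pp.~841--842) and \cite{DSTY} (Proposition~3.3) for the atomic decomposition and norm equivalence, and only supplies the new content, namely the $T^{2,2}$-convergence of the series; you instead attempt to reconstruct the whole tent-space decomposition from scratch, which is a legitimate but genuinely different route. For the $T^{2,2}$ convergence itself your argument is essentially the paper's: both rely on the fact that the construction produces $\lambda_k A_k = F\cdot\chi_{S_k}$ with pairwise disjoint $S_k$ covering the upper half-space, so the tail norm is controlled by dominated convergence.

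Two concrete gaps appear in your reconstruction of the decomposition. First, the assertion that ``any $(y,t)$ in the difference tent has $R_{y,t}\subset\widetilde{\mathcal O_k}$, hence contained in the $3$-dilate of \emph{the unique} maximal dyadic subrectangle of $\widetilde{\mathcal O_k}$ housing $R_{y,t}$'' does not make sense as stated, because $R_{y,t}$ is not dyadic and in general is not contained in any single maximal dyadic subrectangle of $\widetilde{\mathcal O_k}$; the correct step is to inscribe a dyadic rectangle $\hat R\ni y$ of comparable side lengths inside a fixed dilate of $R_{y,t}$, pass to \emph{a} (not the) maximal dyadic $R\supset\hat R$, and verify $(y,t)\in T(cR)$ for a dimensional constant $c$. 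Second, your remark that ``bounded overlap of $3$-dilates of maximal dyadic product rectangles costs only a dimensional constant'' is false: unlike Whitney cubes in one parameter, the maximal dyadic subrectangles of a product open set can have unbounded overlap, which is exactly why Journ\'e's lemma and the $\gamma^{-\delta}(R)$ weights are needed elsewhere in this theory. What actually saves the construction is not bounded overlap but a \emph{disjoint} assignment: each point of $T(\widetilde{\mathcal O_k})\setminus T(\widetilde{\mathcal O_{k+1}})$ is attached to exactly one $R\in m(\widetilde{\mathcal O_k})$ (e.g.\ via the inscribed dyadic rectangle), giving $F_k=\sum_R F_{k,R}$ with disjointly supported pieces; disjointness, not bounded overlap, is what yields the $\sum_R\|F_{k,R}\|^2\lesssim 2^{2k}|\widetilde{\mathcal O_k}|$ bound in condition (ii) of Definition~\ref{deftent}. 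Finally, your Fubini estimate for $\|F_k\|_{L^2(dy\,dt/(t_1t_2))}$ needs the quantitative statement that $(y,t)\notin T(\widetilde{\mathcal O_{k+1}})$ implies $|R_{y,t}\cap\mathcal O_{k+1}^c|\geq\tfrac12|R_{y,t}|$ (this is the whole point of the strong-maximal enlargement), together with a change-of-aperture step, neither of which you record.
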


\begin{proof}
  Except for the final part of the proposition, concerning $T^{2,2}$
convergence, the results are contained in pp. 841-842, \cite{F1},
also Proposition 3.3 in \cite{DSTY}. And we refer the reader to
those papers for the proof.  To this end, from the definition of
$T^{2,2}$, we have

\begin{eqnarray}\label{product bound of T22 norm of F}
\|F\|_{T^{2,2}(\mathbb{R}^{n_1}\times\mathbb{R}^{n_2})}^2=\int_{\mathbb{R}^{n_1}\times\mathbb{R}^{n_2}}
\mathcal{A}F^2dx\leq C
\int_0^\infty\int_0^\infty\int_{\mathbb{R}^{n_1}\times\mathbb{R}^{n_2}}
|F(y,t)|^2{dydt\over t_1t_2}.
\end{eqnarray}

\noindent Suppose now that $F\in
T^{1,2}(\mathbb{R}^{n_1}\times\mathbb{R}^{n_2})\cap
T^{2,2}(\mathbb{R}^{n_1}\times\mathbb{R}^{n_2})$. We recall that in
the constructive proof of the decomposition (\ref{atom decomposition
for product tent space}) in \cite{DSTY}, one has that

$$
 \lambda_jA_j= F 1_{S_j}, $$

\noindent where $\{S_j\}$ is a collection of the open sets which are
pairwise disjoint (up to sets of measure zero), and whose union
covers $\mathbb{R}^{n_1+1}_+\times\mathbb{R}^{n_2+1}_+$. Thus, by
(\ref{product bound of T22 norm of F}),

\begin{eqnarray*}
\|\sum_{j>N}\lambda_jA_j\|_{T^{2,2}(\mathbb{R}^{n_1}\times\mathbb{R}^{n_2})}^2&\leq&
C \int_0^\infty\int_0^\infty\int_{\mathbb{R}^{n_1}\times\mathbb{R}^{n_2}}
\big|\sum_{j>N} 1_{S_j} F(y,t)\big|^2{dydt\over t_1t_2}\\
&=& \sum_{j>N}\iint_{S_j}\big| F(y,t)\big|^2{dydt\over t_1t_2}
\rightarrow 0
\end{eqnarray*}

\noindent as $N\rightarrow\infty$, where we have used the
disjointness of the sets $S_j$ and dominated convergence. It follows
that $F=\sum_j\lambda_jA_j$ in
$T^{2,2}(\mathbb{R}^{n_1}\times\mathbb{R}^{n_2})$.
\end{proof}

\medskip

Now, given  $M\geq1$, we define an operator $\pi_{L_1,L_2,M}$,
 acting initially on $T^{2,2}(\mathbb{R}^{n_1}\times\mathbb{R}^{n_2}),$
as follows:

\begin{eqnarray}\label{product operator pi LM}
\pi_{L_1,L_2,M}(F)(x)= \int_0^\infty\int_0^\infty
\psi(t_1\sqrt{L_1})\psi(t_2\sqrt{L_2})(F(\cdot,t))(x){dt_1\over
t_1}{dt_2\over t_2},
\end{eqnarray}

\noindent where $\psi(x)=x^{2M}\varphi(x)$ and $\varphi(x)$ is the function mentioned in Lemma \ref{lemma finite speed}.
In particular, $\pi_{L_1,L_2,1}$ will denote by $\pi_{L_1,L_2}.$
By a standard duality argument involving well known quadratic estimates for $L_i, i=1,2$, one obtains that
the improper integral converges weakly in $L^2$, and that for every $M\geq 0,$

\begin{eqnarray}\label{product L2 bound of pi LM}
\|\pi_{L_1,L_2,M}(F)\|_{L^2(\mathbb{R}^{n_1}\times\mathbb{R}^{n_2})}\leq
C_M \|F\|_{T^{2,2}(\mathbb{R}^{n_1}\times\mathbb{R}^{n_2})}.
\end{eqnarray}

\noindent Following \cite{HLMMY}, we now observe that $\pi_{L_1,L_2,M}$
essentially maps $T^{1,2}$ atoms into $H^1_{L_1, L_2}$ atoms. We
have

\begin{lemma}\label{lemma-of-product operator-pi L M}
Suppose that $A$ is a
$T^{1,2}(\mathbb{R}^{n_1}\times\mathbb{R}^{n_2})$-atom associated
with an open set $\Omega\subset
\mathbb{R}^{n_1}\times\mathbb{R}^{n_2}$ with finite measure (or more
precisely, to its tent $T(\Omega)$). Then for every $M\geq 1$, there
is a uniform constant $C_M$ such that $C_M^{-1}\pi_{L_1,L_2,M}(A)$
is a $(H^1_{L_1, L_2}, 2, M)$-atom associated with $\Omega$.
\end{lemma}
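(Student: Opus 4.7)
The natural plan: for each maximal dyadic rectangle $R = I_R \times J_R \in m(\Omega)$, set $a_R := \pi_{L_1,L_2,M}(A_R)$ and
$$b_R := \int_0^\infty\!\!\int_0^\infty t_1^{2M}\,t_2^{2M}\,\varphi(t_1\sqrt{L_1})\varphi(t_2\sqrt{L_2})\big(A_R(\cdot,t)\big)\,\frac{dt_1\,dt_2}{t_1 t_2}.$$
Since $\psi(s) = s^{2M}\varphi(s)$, this immediately gives $a_R = (L_1^M\otimes L_2^M)\,b_R$, i.e.\ property (i) of Definition~\ref{def of product atom}, while $\pi_{L_1,L_2,M}(A)=\sum_R a_R$ converges in $L^2$ because $\sum_R A_R$ converges in $T^{2,2}$ (Proposition~\ref{prop-of-product tent space}) and $\pi_{L_1,L_2,M}\colon T^{2,2}\to L^2$ is bounded by \eqref{product L2 bound of pi LM}.

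For the support condition (ii): since $\mathrm{supp}\,A_R \subset T(3R)$, $A_R(\cdot,t)$ is supported in $3I_R\times 3J_R$ and vanishes unless $t_1\leq 3\ell(I_R)$, $t_2\leq 3\ell(J_R)$; by Lemma~\ref{lemma finite speed} the kernel of $(t_i^2L_i)^{k_i}\varphi(t_i\sqrt{L_i})$ sits in $\{|x_i-y_i|\leq t_i\}$, so propagating both constraints places $\mathrm{supp}\,(L_1^{k_1}\otimes L_2^{k_2})b_R$ in $10R$ for every $0\leq k_i\leq M$. The global $L^2$-bound in (iii) is immediate from the chain
$$\|C_M^{-1}\,\pi_{L_1,L_2,M}(A)\|_{L^2}\leq \|A\|_{T^{2,2}}\leq C\|A\|_{L^2(dy\,dt/(t_1 t_2))}\leq C|\Omega|^{-1/2},$$
combining \eqref{product L2 bound of pi LM}, \eqref{product bound of T22 norm of F} and the $T^{1,2}$-atom size.

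The main work is the quantitative estimate in (iii). By duality, for $g\in L^2$ with $\|g\|_{L^2}=1$ I would expand
$$\langle L_1^{k_1}L_2^{k_2}b_R,g\rangle = \int_0^{3\ell(I_R)}\!\!\int_0^{3\ell(J_R)} t_1^{2(M-k_1)}t_2^{2(M-k_2)}\,\big\langle A_R(\cdot,t),\Psi_{k_1}(t_1\sqrt{L_1})\Psi_{k_2}(t_2\sqrt{L_2})g\big\rangle\,\frac{dt_1\,dt_2}{t_1 t_2},$$
where $\Psi_k(s) := s^{2k}\varphi(s)$, and apply Cauchy--Schwarz in the measure $dy\,dt_1\,dt_2/(t_1 t_2)$. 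This reduces the problem to the uniform single-variable bound
$$\int_0^{3\ell}t^{4(M-k)}\,\|\Psi_k(t\sqrt{L})h\|_{L^2}^2\,\frac{dt}{t}\leq C\,\ell^{4(M-k)}\,\|h\|_{L^2}^2, \qquad 0\leq k\leq M,$$
which iterates tensorially in $(t_1,t_2)$ to give $\mathcal{I}\leq C\,\ell(I_R)^{4(M-k_1)}\ell(J_R)^{4(M-k_2)}\|g\|_{L^2}^2$ for the integral factor coming out of Cauchy--Schwarz. I would verify this single-variable bound by three cases: for $k=M$ it is the pure quadratic estimate \eqref{e2.155} since $\Psi_M\in\mathbb{F}(2M)$; for $1\leq k\leq M-1$ one combines \eqref{e2.155} with $t^{4(M-k)}\leq (3\ell)^{4(M-k)}$; and for $k=0$ one uses $\|\varphi(t\sqrt L)\|_{L^2\to L^2}\lesssim 1$ together with integration of the positive power $t^{4M-1}$. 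Tracing back gives
$$\|L_1^{k_1}L_2^{k_2}b_R\|_{L^2}\leq C\,\ell(I_R)^{2(M-k_1)}\ell(J_R)^{2(M-k_2)}\,\|A_R\|_{L^2(dy\,dt/(t_1 t_2))},$$
and squaring, multiplying by $\ell(I_R)^{4(k_1-M)}\ell(J_R)^{4(k_2-M)}$, summing over $R$, and using the $T^{1,2}$-atom size $\sum_R\|A_R\|_{L^2(dy\,dt/(t_1 t_2))}^2\leq |\Omega|^{-1}$ yields condition (iii).

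The principal obstacle is the uniform single-variable bound above: it must cover the three regimes $k=0$, $1\leq k\leq M-1$, and $k=M$ (with genuinely different arguments in each: truncated $L^\infty$-bound, truncated quadratic estimate, and pure quadratic estimate) and then fit together cleanly in a tensor-product iteration across the two parameters; everything else is routine bookkeeping.
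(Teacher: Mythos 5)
Your proposal follows the paper's own route essentially verbatim: the same definition of $b_R$, Lemma~\ref{lemma finite speed} for the support property (ii), and a duality argument with Cauchy--Schwarz in $L^2(dy\,dt/(t_1t_2))$ for the size bound (iii). The one place where you do something more carefully than the paper's write-up is in keeping the factor $t_i^{4(M-k_i)}$ inside the second Cauchy--Schwarz factor and truncating the $t_i$-integrals at $3\ell$, which is precisely what is needed to handle $k_i=0$; the paper's displayed chain appears to invoke the global quadratic estimate \eqref{e2.155} for $(t_i^2L_i)^{k_i}\Phi(t_i\sqrt{L_i})$, which fails when $k_i=0$ because $\Phi(0)\neq 0$, whereas your three-case single-variable bound (pure quadratic estimate for $k=M$, truncated quadratic estimate for $1\leq k\leq M-1$, and $L^2\to L^2$ boundedness plus $\int_0^{3\ell}t^{4M-1}\,dt$ for $k=0$) closes this cleanly. (One small notational slip, inherited from the paper's \eqref{product operator pi LM}: the operator entering $\pi_{L_1,L_2,M}$ should involve $\Phi=\hat\varphi$ rather than $\varphi$, since it is $\Phi$ that has the finite-propagation support property of Lemma~\ref{lemma finite speed}; with that correction your $\Psi_k(s)=s^{2k}\Phi(s)$ and the verification $\Psi_M\in\mathbb F(2M)$ are right.)
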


\begin{proof}
Fix an open set
$\Omega\subset\mathbb{R}^{n_1}\times\mathbb{R}^{n_2}$ with finite
measure and let $A$ be a $T^{1,2}$-atom satisfying (i) and (ii) in
Definition~\ref{deftent}, that is, $A(x,t)$ can be further
decomposed as $A(x,t)=\sum_{R\in m({\Omega})}A_R(x,t)$, where each
$A_R$ is supported in $T(3R)$, and $R\in m(\Omega)$ is a maximal
dyadic subrectangle of $\Omega$ satisfying $
\|A\|^2_{L^2(dxdt/(t_1t_2))}\leq |{\Omega}|^{-1} $ and $
\sum\limits_{R\in m({\Omega})}\|A_R\|^2_{L^2(dxdt/(t_1t_2))}\leq
|{\Omega}|^{-1}. $ Set

$$
a=\pi_{L_1,L_2,M}(A)=\sum_{R\in m({\Omega})} a_R
$$

\noindent
 where
$
a_R: = \big(L_1^M\otimes L_2^M\big) b_R
$
and

 $$
 b_R= \int_0^\infty\int_0^\infty
t_1^{2M}\varphi(t_1\sqrt{L_1})t_2^{2M}\varphi(t_2\sqrt{L_2})\big(A_R(\cdot,t)\big){dt_1\over
t_1}{dt_2\over t_2},
$$

\noindent where $\varphi$ is the function mentioned in Lemma
\ref{lemma finite speed}. Then it follows from Lemma \ref{lemma
finite speed} that the integral kernel
$K_{(t_i^2L_i)^{k}\Phi(t_i\sqrt  L_i)}(x,y)$ of the operator
$(t_i^2L_i)^{k}\Phi(t_i\sqrt L_i)$ satisfies $ {\rm
supp}\,K_{(t_i^2L_i)^{k}\Phi(t_i\sqrt  L_i)}\subset
\big\{(x_i,y_i)\in{\Bbb R}^{n_i}\times {\Bbb R}^{n_i}:
|x_i-y_i|<t_i\big\} $
 for $i=1,2.$\ \ This, together with  the fact that ${\rm supp}\, A_R\subset T(3R)$, shows
that for every $k_1,k_2=0,1,\dots,M$,
\begin{eqnarray}\label{eq1005}
{\rm supp}\,\big(L_1^{k_1} \otimes L_2^{k_2}\big)b_{R}\subseteq 10R.
\end{eqnarray}

\noindent

Next we estimate $\|a\|_{L^2(\mathbb{R}^{n_1}\times\mathbb{R}^{n_2})}$.
Taking $g\in L^2(\mathbb{R}^{n_1}\times\mathbb{R}^{n_2})$
such that  $\|g\|_{L^2(\mathbb{R}^{n_1}\times\mathbb{R}^{n_2})}=1$, we then use the fact
that    $a=\pi_{L_1,L_2,M}(A)$ to obtain

\begin{eqnarray*}
&&\hspace{-1.4cm}
 \Big|\int_{\mathbb{R}^{n_1}\times\mathbb{R}^{n_2}}
\pi_{L_1,L_2,M}(A)(x)g(x)dx \Big|\\
&=&\Big|\lim\limits_{\delta\rightarrow 0}\int_{\mathbb{R}^{n_1}\times\mathbb{R}^{n_2}}
\bigg(\int_{\delta}^{1/\delta}\int_{\delta}^{1/\delta}
\psi(t_1\sqrt{L_1})\psi(t_2\sqrt{L_2})(A(\cdot,t))(x){dt \over
t_1  t_2}\bigg) g(x)dx \Big|\\
&\leq& \int_{\mathbb{R}^{n_1}\times\mathbb{R}^{n_2}}\int_0^\infty\int_0^\infty
\Big|A(x,t) \psi(t_1\sqrt{L_1})\psi(t_2\sqrt{L_2})\big(g\big)(x)\Big| {dx dt\over t_1 t_2}  \\[2pt]
&\leq & C
\|A\|_{L^2(dxdt/(t_1t_2))}\bigg(\int_0^\infty\int_0^\infty
\int_{\mathbb{R}^{n_1}\times\mathbb{R}^{n_2}}|\psi(t_1\sqrt{L_1})\psi(t_2\sqrt{L_2})\big(g\big)(x)|^2
 {dx dt\over t_1 t_2}\bigg)^{1/2}\\
&\leq & C
|\Omega|^{-{1\over 2}}\|g\|_{L^2(\mathbb{R}^{n_1}\times\mathbb{R}^{n_2})}
 \leq   C |\Omega|^{-{1\over 2}},
\end{eqnarray*}

\noindent and we have  $
\|a\|_{L^2(\mathbb{R}^{n_1}\times\mathbb{R}^{n_2})} \leq C
|\Omega|^{-{1\over 2}}$.

The similar argument as above shows that for every  $0\leq k_1,k_2\leq M$,

\begin{eqnarray*}
&&\hspace{-1cm}\Big|\Big\langle  \Big( (\ell(I_R)^2L_1)^{k_1}\otimes (\ell(J_R)^2L_2)^{k_2}\Big)b_R,g\Big\rangle\Big|\\[2pt]
&=&\bigg|\int\limits_{\mathbb{R}^{n_1}\times\mathbb{R}^{n_2}}\int\limits_0^{3\ell(I_R)} \int\limits_0^{3\ell(J_R)}\\
&&\hskip.5cm\Big( (\ell(I_R)^2L_1)^{k_1}\otimes
(\ell(J_R)^2L_2)^{k_2}\Big)t_1^{2M}\Phi(t_1\sqrt{L_1})t_2^{2M}\Phi(t_2\sqrt{L_2})\big(A_R(\cdot,t)\big)(x)g(x){dxdt
\over
t_1  t_2}  \bigg|\\[2pt]
& \leq&
 \ell(I_R)^{2M}\ell(J_R)^{2M} \int\limits_{\mathbb{R}^{n_1}\times\mathbb{R}^{n_2}}
\int\limits_0^\infty\!\!\int\limits_0^\infty\big|A_R(x,t)\big|
\big|(t_1^2L_1)^{k_1}
\Phi(t_1\sqrt{L_1})(t_2^2L_2)^{k_2}\Phi(t_2\sqrt{L_2})\big(g\big)(x)\big|
{dxdt \over
t_1  t_2}   \\[2pt]
& \leq&  C \ell(I_R)^{2M}\ell(J_R)^{2M}
\|A_R\|_{L^2(dxdt/(t_1t_2))},
\end{eqnarray*}

\noindent which gives
\begin{eqnarray*}
&&\hspace{-1cm}\sum_{R\subset\Omega}\ell(I_R)^{-4M}\ell(J_R)^{-4M}
\|\big(\ell(I_R)^2L_1\big)^{k_1}\otimes\big(\ell(J_R)^2L_2\big)^{k_2}b_R\|^2_{L^2(\mathbb{R}^{n_1}\times\mathbb{R}^{n_2})} \\[2pt]
&&\ \ \leq  C
\sum_{R\subset\Omega}\|A_R\|^2_{L^2(dxdt/(t_1t_2))}
  \leq  C |\Omega|^{-1} .
\end{eqnarray*}

 Combining all the estimates above, we can see that $a$ is
a $(H^1_{L_1, L_2}, 2, M)$-atom as in Definition \ref{def of product atom}  up to some
constant depending only on $M,\psi$. This completes the proof of
Lemma \ref{lemma-of-product operator-pi L M}.
\end{proof}

We are now ready to establish the atomic decomposition of
$H_{L_1,L_2}^1(\mathbb{R}^{n_1}\times\mathbb{R}^{n_2})\cap
L^2(\mathbb{R}^{n_1}\times\mathbb{R}^{n_2})$.

\begin{prop}\label{prop-product H-SL subset H-at}
Suppose $M\geq 1$. If $f\in
H_{L_1,L_2}^1(\mathbb{R}^{n_1}\times\mathbb{R}^{n_2})\cap
L^2(\mathbb{R}^{n_1}\times\mathbb{R}^{n_2})$, then there exist a
family of $(H^1_{L_1, L_2}, 2, M)$-atoms $\{a_j\}_{j=0}^\infty$ and a sequence of
numbers $\{\lambda_j\}_{j=0}^\infty\in \ell^1$ such that $f$ can be
represented in the form $f=\sum\lambda_ja_j$, with the sum
converging in $L^2(\mathbb{R}^{n_1}\times\mathbb{R}^{n_2})$, and
$$ \|f\|_{\mathbb{H}^1_{L_1,L_2,at,M}(\mathbb{R}^{n_1}\times\mathbb{R}^{n_2})}\leq C\sum_{j=0}^\infty|\lambda_j|
\leq
C\|f\|_{H_{L_1,L_2}(\mathbb{R}^{n_1}\times\mathbb{R}^{n_2})},
$$

\noindent where $C$ is independent of $f$. In particular,
$$ H_{L_1,L_2}^1(\mathbb{R}^{n_1}\times\mathbb{R}^{n_2})\cap
L^2(\mathbb{R}^{n_1}\times\mathbb{R}^{n_2})\ \ \subseteq\
\mathbb{H}^1_{L_1,L_2,at,M}(\mathbb{R}^{n_1}\times\mathbb{R}^{n_2}).
$$

\end{prop}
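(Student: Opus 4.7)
\smallskip

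\noindent\textbf{Proof proposal.} The plan is to combine a Calder\'on reproducing formula with the tent-space atomic decomposition of Proposition \ref{prop-of-product tent space} and the converting mechanism of Lemma \ref{lemma-of-product operator-pi L M}. Concretely, given $f\in H^1_{L_1,L_2}\cap L^2$, I would exhibit a tent-space function $F$ such that (a) $f=\pi_{L_1,L_2,M}(F)$, (b) $\|F\|_{T^{1,2}}\approx\|f\|_{H^1_{L_1,L_2}}$, and (c) $F\in T^{2,2}$; then decompose $F$ atomically and push the decomposition through $\pi_{L_1,L_2,M}$.

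First, fix $\varphi\in C^\infty_0(\mathbb R)$ even, supported in $(-c_0^{-1},c_0^{-1})$, and set $\psi(x):=x^{2M}\varphi(x)$, so that $\psi$ is precisely the symbol used to define $\pi_{L_1,L_2,M}$ in \eqref{product operator pi LM}. Let $\widetilde\psi(x):=x^2e^{-x^2}$, and, after scaling $\varphi$ by a constant, arrange that
\begin{equation*}
\int_0^\infty \psi(s)\widetilde\psi(s)\,\frac{ds}{s}=1.
\end{equation*}
Define
\begin{equation*}
F(x,t):=\bigl(t_1^2L_1e^{-t_1^2L_1}\otimes t_2^2L_2e^{-t_2^2L_2}\bigr)f(x)=\widetilde\psi(t_1\sqrt{L_1})\widetilde\psi(t_2\sqrt{L_2})f(x).
\end{equation*}
Comparing with the definition \eqref{esf} of $Sf$, we have $\mathcal{A}F(x)=Sf(x)$, so $\|F\|_{T^{1,2}}=\|f\|_{H^1_{L_1,L_2}}$. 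Applying the quadratic estimate \eqref{e2.155} in each variable successively (the spectral operators in the two factors commute and act on different variables), and using \eqref{product bound of T22 norm of F}, one gets $\|F\|_{T^{2,2}}\le C\|f\|_{L^2}<\infty$. Thus $F\in T^{1,2}\cap T^{2,2}$.

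Second, the bounded Borel functional calculus, together with Fubini (justifiable by truncating the improper integrals and using \eqref{e2.155} to pass to the limit in $L^2$), yields the reproducing formula
\begin{equation*}
\pi_{L_1,L_2,M}(F) = \int_0^\infty\!\!\int_0^\infty \psi(t_1\sqrt{L_1})\psi(t_2\sqrt{L_2})\widetilde\psi(t_1\sqrt{L_1})\widetilde\psi(t_2\sqrt{L_2})f\;\frac{dt_1\,dt_2}{t_1\,t_2}=f,
\end{equation*}
in $L^2(\mathbb R^{n_1}\times\mathbb R^{n_2})$. Now invoke Proposition \ref{prop-of-product tent space} to write $F=\sum_j\lambda_j A_j$ with $\sum_j|\lambda_j|\le C\|F\|_{T^{1,2}}=C\|f\|_{H^1_{L_1,L_2}}$, the series converging in $T^{1,2}$ and, since $F\in T^{2,2}$, also in $T^{2,2}$. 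Because $\pi_{L_1,L_2,M}$ is bounded from $T^{2,2}$ to $L^2$ by \eqref{product L2 bound of pi LM}, we can apply it termwise:
\begin{equation*}
f=\pi_{L_1,L_2,M}(F)=\sum_j\lambda_j\,\pi_{L_1,L_2,M}(A_j)\quad\text{in }L^2.
\end{equation*}
Finally, Lemma \ref{lemma-of-product operator-pi L M} gives a uniform constant $C_M$ so that each $a_j:=C_M^{-1}\pi_{L_1,L_2,M}(A_j)$ is a genuine $(H^1_{L_1,L_2},2,M)$-atom associated to the open set supporting $A_j$. Absorbing $C_M$ into the coefficients yields the desired atomic representation with $\sum_j|\lambda_j|\le C\|f\|_{H^1_{L_1,L_2}}$, and thus the stated norm inequality and the inclusion.

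The main obstacle is the rigorous justification of the product Calder\'on reproducing formula together with the $T^{2,2}$ membership of $F$; once these are in hand, everything reduces to plugging the tent decomposition into $\pi_{L_1,L_2,M}$ and applying the already established Lemma \ref{lemma-of-product operator-pi L M}. The reproducing formula requires handling double improper integrals in two independent spectral parameters; I would approach this by truncating to $\int_\delta^{1/\delta}\!\int_\delta^{1/\delta}$, using the commutativity of the factorwise functional calculi and Fubini on the bounded truncated operators, and then letting $\delta\to0$ using the quadratic estimate \eqref{e2.155} applied successively in the two variables to control the $L^2$ error.
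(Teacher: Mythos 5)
Your proof is correct and follows essentially the same route as the paper's: set $F$ to be the square-function integrand so that $\mathcal{A}F=Sf$, check $F\in T^{1,2}\cap T^{2,2}$ via the quadratic estimate \eqref{e2.155}, decompose $F$ using Proposition \ref{prop-of-product tent space}, apply $\pi_{L_1,L_2,M}$ termwise (justified by its $T^{2,2}\to L^2$ boundedness), and then invoke Lemma \ref{lemma-of-product operator-pi L M} to convert the tent-space atoms into $(H^1_{L_1,L_2},2,M)$-atoms. In fact you are somewhat more careful than the paper itself, which drops the subscript $M$ on $\pi_{L_1,L_2,M}$ and writes $t_i^2 e^{-t_iL_i}$ where $t_i^2 L_i e^{-t_i^2L_i}$ is meant, and which leaves the normalization constant $c_\psi$ in the Calder\'on reproducing formula implicit.
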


\begin{proof}

Let $f\in
H_{L_1,L_2}^1(\mathbb{R}^{n_1}\times\mathbb{R}^{n_2})\cap
L^2(\mathbb{R}^{n_1}\times\mathbb{R}^{n_2})$, and set
$$ F(\cdot,t)=(t_1^2L_1e^{-t_1^2L_1}\otimes t_2^2L_2e^{-t_2^2L_2})f. $$

\noindent We note that $F\in
T^{1,2}(\mathbb{R}^{n_1}\times\mathbb{R}^{n_2})\cap
T^{2,2}(\mathbb{R}^{n_1}\times\mathbb{R}^{n_2})$ by the definition
of $H_{L_1,L_2}^1(\mathbb{R}^{n_1}\times\mathbb{R}^{n_2})$ and Lemma
\ref{lemma-of-product operator-pi L M}. Therefore, by Proposition
\ref{prop-of-product tent space},
$$ F=\sum_{j=0}^\infty\lambda_jA_j, $$

\noindent where each $A_j$ is a $T^{1,2}$-atom, the sum converges in
both $T^{1,2}(\mathbb{R}^{n_1}\times\mathbb{R}^{n_2})$ and
$T^{2,2}(\mathbb{R}^{n_1}\times\mathbb{R}^{n_2})$, and

\begin{eqnarray}\label{e1 in section 5.3.3}
\sum_{j=0}^\infty|\lambda_j|\leq
C\|F\|_{T^{1,2}(\mathbb{R}^{n_1}\times\mathbb{R}^{n_2})}=C\|f\|_{H_{L_1,L_2}^1(\mathbb{R}^{n_1}\times\mathbb{R}^{n_2})}.
\end{eqnarray}

\noindent From the spectral theory (\cite{Yo}), we have

\begin{eqnarray}\label{e2 in section 5.3.3}
f(x)&=& \int_0^\infty\int_0^\infty
\psi(t_1\sqrt{L_1})\psi(t_2\sqrt{L_2})(t_1^2e^{-t_1L_1}\otimes
t_2^2e^{-t_2L_2})f(x){dt\over t_1t_2}\\
&=&  \pi_{L_1,L_2}(F)(x)=  c_{\psi}
\sum_{j=0}^\infty\lambda_j\pi_{L_1,L_2}(A_j)(x),\nonumber
\end{eqnarray}

\noindent where   the last sum converges in
$L^2(\mathbb{R}^{n_1}\times\mathbb{R}^{n_2})$. Moreover, by Lemma
\ref{lemma-of-product operator-pi L M}, for every $M\geq 1$, we have
that up to multiplication by some harmless constant $C_M$, each
$a_j=c_{\psi}\pi_{L_1,L_2}(A_j)$ is a $(H^1_{L_1, L_2}, 2,
M)$-atom. Consequently, the last sum in (\ref{e2 in section 5.3.3})
is an atomic representation, so that $f\in
\mathbb{H}^1_{L_1,L_2,at,M}(\mathbb{R}^{n_1}\times\mathbb{R}^{n_2})$,
and by (\ref{e1 in section 5.3.3}) we have

$$\|f\|_{\mathbb{H}_{L_1,L_2,at,M}^1(\mathbb{R}^{n_1}\times\mathbb{R}^{n_2})}
\leq
C\|f\|_{H_{L_1,L_2}^1(\mathbb{R}^{n_1}\times\mathbb{R}^{n_2})}.
$$

\noindent
This concludes the proof of Theorem \ref{theorem of Hardy space atomic decom}
\end{proof}

\vskip 1cm

\section{Boundedness of singular integrals on  Hardy space  $H^1_{L_1,
L_2}(\mathbb{R}^{n_1}\times\mathbb{R}^{n_2})$ and Lebesgue spaces
$L^p(\mathbb{R}^{n_1}\times\mathbb{R}^{n_2})$}
\setcounter{equation}{0}

Let $T$ be a bounded linear operator
 on $L^2(\mathbb{R}^{n_1}\times\mathbb{R}^{n_2})$ with an associated kernel $K(x_1,y_1,x_2,y_2)$ in the sense that

\begin{eqnarray}\label{def of singular integral operators}
Tf(x_1,x_2)=\iint_{\mathbb{R}^{n_1}\times\mathbb{R}^{n_2}}
K(x_1,y_1,x_2,y_2)f(y_1,y_2)dy_1dy_2,
\end{eqnarray}

\noindent
and the above formula holds for each continuous function $f$ with compact support, and for almost all $(x_1, x_2)$
not in the support of $f$. We use the same definitions for ${\widetilde K}^{(1)} (x_1, y_1) (x_2, y_2)$,
${\widetilde K}^{(2)}(x_2, y_2) (x_1, y_1)$, $T\circ(e^{-t_1L_1}\otimes e^{-t_2L_2}),$
${\widetilde K}^{(1)}_{(t_1, t_2)} (x_1, y_1) (x_2, y_2)$,
${\widetilde K}^{(2)}_{(t_1, t_2)} (x_2, y_2) (x_1, y_1)$ and $\Delta K_{(t_1,\, t_2)}(x_1,y_1,x_2,y_2)$ as in the Introduction.
%
%
%
%
%
%


\medskip

\noindent

The aim of this section is to prove the following  theorem.

 \medskip

\begin{theorem}\label{theorem T from Hp to Lp}
Let $T$ be a bounded linear operator
 on $L^2(\mathbb{R}^{n_1}\times\mathbb{R}^{n_2})$ with an associated kernel $K(x_1,y_1,x_2,y_2)$.
Suppose that the composite operators   $T\circ(e^{-t_1L_1}\otimes
e^{-t_2L_2}), t_i\geq 0, i=1,2$ have associated kernels
  $K_{(t_1, t_2)}(x_1,y_1,x_2,y_2)$ in the sense of  \eqref{def of singular integral operator}  and there exist    constants $\delta>0$ and $C>0$ such
that for all $\gamma_1,\gamma_2\geq 2$,

\begin{eqnarray} \label{cond1-}
 \int_{|x_1-y_1|>\gamma_1t_1}
\|{\widetilde K}^{(1)}{(x_1,y_1)}-{{\widetilde K}_{(t_1^2, \, 0)}^{(1)}}{(x_1,y_1)}\|_{ (L^2({\mathbb R}^{n_2}),\, L^2({\mathbb R}^{n_2}) )}dx_1\leq
C\gamma_1^{-\delta},
\end{eqnarray}

\begin{eqnarray}\label{cond2-}
 \int_{|x_2-y_2|>\gamma_2t_2}
\|{\widetilde K}^{(2)}{(x_2,y_2)}-{{\widetilde K}_{(0, \, t_2^2)}^{(2)}}{(x_2,y_2)}\|_{ (L^2({\mathbb R}^{n_1}),\, L^2({\mathbb R}^{n_1}) )}dx_2\leq
C\gamma_2^{-\delta},
\end{eqnarray}
\begin{eqnarray} \label{cond3-}
 \int_{\substack{|x_1-y_1|>\gamma_1t_1\\ |x_2-y_2|>\gamma_2t_2}}\big|\Delta K_{(t_1^2,t_2^2)}(x_1,y_1,x_2,y_2)\big|dx_1dx_2\leq
C\gamma_1^{-\delta}\gamma_2^{-\delta}.
\end{eqnarray}

Then $T$ extends to a bounded operator from
$H_{L_1, L_2}^1(\mathbb{R}^{n_1}\times\mathbb{R}^{n_2})$ to
$L^1(\mathbb{R}^{n_1}\times\mathbb{R}^{n_2})$. Hence,  $T$ can be extended from $L^2(\mathbb{R}^{n_1}\times\mathbb{R}^{n_2})\cap
L^p(\mathbb{R}^{n_1}\times\mathbb{R}^{n_2})$ to a bounded operator on
$L^p(\mathbb{R}^{n_1}\times\mathbb{R}^{n_2})$   for all $1<p\leq 2$.
\end{theorem}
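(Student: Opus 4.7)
\medskip\noindent\textbf{Proof proposal.} The strategy is to combine the atomic decomposition of Theorem~\ref{theorem of Hardy space atomic decom} with Lemma~\ref{lemma-of-T-bd-on product Hp}. Since $T$ is bounded on $L^2(\mathbb R^{n_1}\times\mathbb R^{n_2})$ it is automatically of weak type $(2,2)$, so the only substantive task is to verify that $\|Ta\|_{L^1}\le C$ holds uniformly as $a$ ranges over $(H^1_{L_1,L_2},2,M)$-atoms, for some fixed $M>\max(n_1,n_2)/4$. Once this is done, Lemma~\ref{lemma-of-T-bd-on product Hp} together with the density of $\mathbb{H}^1_{L_1,L_2,at,M}\cap L^2$ inside $H^1_{L_1,L_2}$ yields the end-point bound $T:H^1_{L_1,L_2}\to L^1$, and standard interpolation between this bound and the $L^2$ bound gives $L^p$-boundedness for $1<p<2$.

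Fix an atom $a=\sum_{R\in m(\Omega)}a_R$ with $a_R=(L_1^M\otimes L_2^M)b_R$, $\mathrm{supp}\,b_R\subset 10R$, $R=I_R\times J_R$, and set $t_1=\ell(I_R)$, $t_2=\ell(J_R)$. Following the proof of Lemma~\ref{leAtom}, denote by $\widetilde R$ the $100$-dilate of the Journ\'e enlargement $\widetilde I_R\times\widetilde J_R$ of $R$, and set $\mathcal E=\bigcup_R\widetilde R$, so that $|\mathcal E|\le C|\Omega|$ by the strong maximal theorem. Split
\[
\int_{\mathbb R^{n_1}\times\mathbb R^{n_2}}|Ta|\,dx\le\int_{\mathcal E}|Ta|\,dx+\sum_R\int_{\widetilde R^{c}}|Ta_R|\,dx.
\]
The first piece is $\le|\mathcal E|^{1/2}\|T\|_{L^2\to L^2}\|a\|_{L^2}\le C$ by Cauchy--Schwarz and property~(iii) of Definition~\ref{def of product atom}. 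The real work is in the second piece.

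For each $R$, introduce $T_{(s_1,s_2)}:=T\circ(e^{-s_1L_1}\otimes e^{-s_2L_2})$ and use the tautological identity
\[
T=\bigl(T-T_{(t_1^2,0)}-T_{(0,t_2^2)}+T_{(t_1^2,t_2^2)}\bigr)+T_{(t_1^2,0)}+T_{(0,t_2^2)}-T_{(t_1^2,t_2^2)}.
\]
Partition $\widetilde R^{c}$ into the four regions according to whether $x_1\notin 100\widetilde I_R$ or $x_2\notin 100\widetilde J_R$. In the doubly-far region, the first, mixed-difference operator has kernel $\Delta K_{(t_1^2,t_2^2)}$, and a dyadic annular slicing paired with hypothesis~\eqref{cond3-} produces the desired bound. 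In the two ``one-variable-far'' regions, the relevant $T_{(\cdot,\cdot)}a_R$ terms are handled by absorbing the corresponding $L_1^M$ or $L_2^M$ factor of $a_R$ into the semigroup, so that $(t_1^2L_1)^Me^{-t_1^2L_1}$ (resp.\ $(t_2^2L_2)^Me^{-t_2^2L_2}$) acts on one variable of $b_R$; the time-derivative Gaussian bound~\eqref{time derivative of heat hernel} with $M>\max(n_1,n_2)/4$ then supplies pointwise off-diagonal decay in the regularised variable, while \eqref{cond1-} or \eqref{cond2-} furnishes the single-direction cancellation in the complementary variable, applied in $L^2$ via the $L^2\to L^2$ operator norm appearing in those hypotheses.

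Assembling these estimates produces, for each $R$, a bound of the shape
\[
\int_{\widetilde R^{c}}|Ta_R|\,dx\le C|R|^{1/2}\bigl(\gamma_1(R)^{-\delta/2}+\gamma_2(R)^{-\delta/2}\bigr)\mathcal N_R,
\]
where $\mathcal N_R$ aggregates the $L^2$ norms of $(\ell(I_R)^2L_1)^{k_1}(\ell(J_R)^2L_2)^{k_2}b_R$ for $0\le k_1,k_2\le M$. Summing over $R$ by Cauchy--Schwarz, Journ\'e's covering lemma $\sum_R|R|\gamma_i(R)^{-\delta}\le C|\Omega|$, and property~(iii) of Definition~\ref{def of product atom} gives $\sum_R\int_{\widetilde R^{c}}|Ta_R|\le C$. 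The main technical obstacle is the doubly-far region: neither \eqref{cond1-} nor \eqref{cond2-} is strong enough on its own there, and one must combine the genuinely two-parameter cancellation encoded in \eqref{cond3-} with the full $L_1^M\otimes L_2^M$ structure of the atom to extract the two independent decay factors $\gamma_1(R)^{-\delta/2}\gamma_2(R)^{-\delta/2}$ required for a double application of Journ\'e's lemma.
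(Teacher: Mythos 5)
Your overall architecture matches the paper's: atomic decomposition via Theorem~\ref{theorem of Hardy space atomic decom}, reduction to a uniform $L^1$ bound on atoms through Lemma~\ref{lemma-of-T-bd-on product Hp}, the split of $\int|Ta|$ into a near region $\bigcup_R\widetilde R$ (handled by $L^2$ boundedness and Cauchy--Schwarz) and a far region (handled by Journ\'e's covering lemma), and the further split of the far region into one-variable-far and doubly-far pieces. The Journ\'e-lemma assembly at the end is also correct. So far so good.

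The genuine gap is in the operator decomposition used inside each region. Your ``tautological identity''
\[
T=\bigl(T-T_{(t_1^2,0)}-T_{(0,t_2^2)}+T_{(t_1^2,t_2^2)}\bigr)+T_{(t_1^2,0)}+T_{(0,t_2^2)}-T_{(t_1^2,t_2^2)}
\]
leaves three residual operators $T_{(t_1^2,0)}$, $T_{(0,t_2^2)}$, $T_{(t_1^2,t_2^2)}$ which are compositions of $T$ with semigroups, \emph{not} differences. The hypotheses \eqref{cond1-}--\eqref{cond3-} give $L^1$-type off-diagonal decay only for the difference kernels $K-K_{(t_1^2,0)}$, $K-K_{(0,t_2^2)}$ and $\Delta K_{(t_1^2,t_2^2)}$; they say nothing about $K_{(t_1^2,0)}$ or $K$ by themselves. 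Absorbing $L_1^M$ into $e^{-t_1^2L_1}$ does produce a Gaussian-decaying kernel $(t_1^2L_1)^Me^{-t_1^2L_1}$ acting on $b_R$, but this Gaussian decay is then post-composed with $T$, whose kernel is \emph{unconstrained} in the far region; hence $\int_{(100\widetilde I_R)^c\times 100 J_R}|T_{(t_1^2,0)}a_R|$ has no reason to be small. Moreover the paragraph describing the one-variable-far regions confuses which variable is regularised and which hypothesis is being invoked: in the region far in $x_1$, the needed cancellation is precisely in the $x_1$ variable and comes from \eqref{cond1-}, not from Gaussian decay of a regularised $x_1$-kernel nor from \eqref{cond2-}.

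The missing ingredient is the averaged Calder\'on-type reproducing identity used in the paper's proof,
\[
g=t_1^{-2}\int_0^{\sqrt{2}\,t_1}s\,(1\!\!1-e^{-s^2L_1})g\,ds\;+\;c\,t_1^{-2}\,(1\!\!1-e^{-2t_1^2L_1})\,L_1^{-1}g ,
\]
applied in the first variable (and a symmetric one in the second, and the tensor of both for the doubly-far region). The crucial point is that \emph{every} term on the right-hand side is a genuine difference $1\!\!1-e^{-\sigma L_1}$; there is no residual pure-semigroup term. The factor $L_1^{-1}$ in the second piece is absorbed by the atom structure via $L_1^{-1}a_R=a_{R,2}=(1\!\!1_1\otimes L_2^M)b_R$ (using $a_R=(L_1^M\otimes L_2^M)b_R$). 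With this identity, every term appearing in $Ta_R$ is of the form $T\circ\bigl((1\!\!1-e^{-\sigma_1 L_1})\otimes 1\!\!1\bigr)$ or $T\circ\bigl((1\!\!1-e^{-\sigma_1 L_1})\otimes(1\!\!1-e^{-\sigma_2 L_2})\bigr)$ applied to $a_R$, $a_{R,1}$, $a_{R,2}$ or $b_R$, and each such term is controlled by exactly one of \eqref{cond1-}, \eqref{cond2-}, \eqref{cond3-}, with the remaining close variable handled by $\|\cdot\|_{L^2\to L^2}$, H\"older, and property~(iii) of the atom. Without this averaging device your residual pure-semigroup terms cannot be estimated, so as written the argument does not close.
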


{\bf Remark}: Conditions \ref{cond1-}, \ref{cond2-} and \ref{cond3-} are the same as conditions
\ref{condn1-}, \ref{condn2-} and \ref{condn3-} in the Introduction, respectively.  In the above Theorem \ref{theorem T from Hp to Lp},
we repeat these conditions only for clarity.
\medskip

\begin{proof}
To prove Theorem \ref{theorem T from Hp to Lp}, it suffices to show
that   $T$  is uniformly bounded on each $(H^1_{L_1, L_2}, 2, M)$ atom $a$ with
$M>\max({n_1/4},{n_2/4})$, and  there exists a constant
$C>0$ independent of $a$ such that

\begin{eqnarray}\label{T alpha uniformly bd}
\|T(a)\|_{L^1(\mathbb{R}^{n_1}\times\mathbb{R}^{n_2})}\leq C.
\end{eqnarray}

From the definition of $(H^1_{L_1, L_2}, 2, M)$ atom, it follows
that $a$ is supported in some $\Omega\subset
\mathbb{R}^{n_1}\times\mathbb{R}^{n_2}$ and $a$ can be further
decomposed into $a=\sum_{R\in m(\Omega)}a_R$. For any $R=I \times
J\subset\Omega$, let $l$ be the biggest dyadic cube containing  $I$,
so that $l\times J\subset\widetilde{\Omega}$, where
$\widetilde{\Omega}=\{x\in\mathbb{R}^{n_1}\times\mathbb{R}^{n_2}:\
M_s(\chi_{\Omega})(x)>1/2\}$. Next, let $Q$ be the biggest dyadic
cube containing $J$, so that $l\times Q\subset
\widetilde{\widetilde{\Omega}}$, where
$\widetilde{\widetilde{\Omega}}=\{x\in
\mathbb{R}^{n_1}\times\mathbb{R}^{n_2}:\
M_s(\chi_{\widetilde{\Omega}})(x)>1/2\}$. Now let $\widetilde{R}$ be
the 100-fold dilate of $l\times Q$ concentric with $l\times Q$.
Clearly, an application of the strong maximal function theorem shows
that $\big|\bigcup\limits_{R\subset\Omega} \widetilde{R}\big|\leq
C|\widetilde{\widetilde{\Omega}}|\leq C|\widetilde{\Omega}|\leq
C|\Omega|$. From (iii) in the definition of $(H^1_{L_1, L_2}, 2, M)$
atom, we can obtain that

\begin{eqnarray}\label{T alpha uniformly bd on inside of Omega}
\int_{\cup \widetilde{R}}|T(a)(x)|dx &\leq& \big|\cup
\widetilde{R}\big|^{1/2}\|T(a)\|_{L^2}\leq
C|\Omega|^{1/2}
\|a\|_{L^2}
 \leq   C|\Omega|^{1/2}|\Omega|^{-1/2}
 \leq  C.\nonumber
\end{eqnarray}

\noindent
Therefore,  the proof of  (\ref{T alpha uniformly bd}) reduces to show
that
\begin{eqnarray}\label{T alpha uniformly bd on outside of Omega}
\int_{\big(\cup \widetilde{R}\big)^c}|T(a)(x)|dx \leq C.
\end{eqnarray}

\noindent Since  $a=\sum_{R\in m(\Omega)}a_R$, we have
\begin{eqnarray*}
\int_{\big(\cup \widetilde{R}\big)^c}|T(a)(x)|dx&\leq& \sum_{R\in
m(\Omega)} \int_{{\widetilde{R}}^c}|T(a_R)(x)|dx\\
&\leq& \sum_{R\in
m(\Omega)}\int_{(100l)^c\times\mathbb{R}^{n_2}}|T(a_R)(x)|dx+\sum_{R\in
m(\Omega)}\int_{\mathbb{R}^{n_1}\times(100S)^c}|T(a_R)(x)|dx \\
&=&D+E.
\end{eqnarray*}

\noindent
For term $D$, we observe that
\begin{eqnarray*}
\int_{(100l)^c\times\mathbb{R}^{n_2}}|T(a_R)(x)|dx&=&\Big(\int_{(100l)^c\times 100J} +\int_{(100l)^c\times (100J)^c}\Big) |T(a_R)(x)|dx
 =D_1+D_2.
\end{eqnarray*}

\noindent
Let us first estimate the term $D_1$. In what follows, we let $t_1=\ell(I)$, $t_2=\ell(J)$ and denote by
$
a_{R,1}=(L_1\otimes 1\!\!1_2 ) b_R $ and $ a_{R,2}=(1\!\!1_1 \otimes L_2) b_R.
$
It follows that

\begin{eqnarray}\label{ea}
 a_R=(L_1\otimes L_2 ) b_R=(1\!\!1_1\otimes L_2) a_{R,1}=(L_1\otimes 1\!\!1_2)a_{R,2}.
\end{eqnarray}

\noindent For every $x_1\in{\mathbb R}^{n_1}$, we have the identity:

\begin{eqnarray}\label{eID}
g(x_1)&=& \Big(  t_1^{-2}  \int_0^{\sqrt{2}t_1}sds \Big)\cdot g(x_1)\nonumber\\
&=& t_1^{-2}  \int_0^{\sqrt{2}t_1}s(1\!\!1_1-e^{-s^2L_1})g(x_1)ds +t_1^{-2} \int_0^{\sqrt{2}t_1}se^{-s^2L_1}g(x_1)ds\nonumber\\
&=& t_1^{-2}  \int_0^{\sqrt{2}t_1}s(1\!\!1_1-e^{-s^2L_1})g(x_1)ds +2t_1^{-2}(1\!\!1_1-e^{-2t_1^2L_1}) L_1^{-1}g(x_1).
\end{eqnarray}

\noindent   This, in combination with the fact that $  (L_1^{-1}\otimes 1\!\!1_2 ) a_R=a_{R,2}$,  gives the term $D_1$ in the following way

\begin{eqnarray*}
D_1&\leq &  t_1^{-2}\int_0^{\sqrt{2}t_1}
\iint_{(100l)^c\times 100J} s\big|
T\circ\big((1\!\!1_1-e^{-s^2L_1})\otimes 1\!\!1_2\big)(a_{R})(x_1,x_2) \big|dx_1dx_2  ds\\[2pt]
&&+ Ct_1^{-2}
\iint_{(100l)^c\times 100J}\big|
T\circ\big((1\!\!1_1-e^{-2t_1^2L_1}) \otimes 1\!\!1_2\big) (a_{R,2})(x_1,x_2) \big|dx_1dx_2 \\[2pt]
&=& D_{11} + D_{12}.
\end{eqnarray*}

\noindent

\noindent Fix $y_1$ so that
$a_{R}(y_1,\cdot)$ is supported on $10J$. Hence one may apply   H\"older's  inequality  to the term $D_{11}$ to obtain

\begin{eqnarray*}
D_{11}
 &\leq&  t_1^{-2}\int\limits_0^{\sqrt{2}t_1}\!\!
\iint\limits_{(100l)^c\times 10I}\!\!s \,
\big\|\big\{{\widetilde K}^{(1)}{(x_1,y_1)} -{{\widetilde K}_{(s^2,\, 0)}}{(x_1,y_1)}\big\}a_{R}(y_1,\cdot)\big\|_{L^1(|x_2|\sim 100|J|; dx_2)}
  dy_1dx_1  ds\\
  &\leq&  C  t_1^{-2}  |J|^{1/2} \int\limits_0^{\sqrt{2}t_1}\!\!
\iint\limits_{(100l)^c\times 10I}\!\!s \,
\|{\widetilde K}^{(1)}{(x_1,y_1)}-{{\widetilde K}_{(s^2, \, 0)}}{(x_1,y_1)}\|_{(L^2,\, L^2)}\\
&&\hskip.5cm\times\|a_{R}(y_1, \cdot)\|_{L^2(dy_2)}\ dy_1dx_1  ds.
\end{eqnarray*}

\noindent Noting that $|x_1-y_1|\geq 50\ell(l)\geq 50 {\ell(l)\over
\ell(I)}\cdot \ell(I)=50 {\ell(l)\over \ell(I)} \cdot t_1 \geq
{50\over \sqrt{2}} {\ell(l)\over \ell(I)} \cdot s$,
we have
\begin{eqnarray*}
D_{11} &\leq & C   \gamma_1(R)^{-\delta}  t_1^{-2}  |J|^{1/2}   \int_0^{\sqrt{2}t_1}sds\
\int_{10I}\|a_{R}(y_1, \cdot)\|_{L^2(dy_2)}\ dy_1\\
&\leq & C     \gamma_1(R)^{-\delta}   |J|^{1/2}  |I|^{1/2}
\Big(\int_{10I}\|a_{R}(y_1, \cdot)\|^2_{L^2(dy_2)}\ dy_1\Big)^{1/2}\\
 &\leq&   C\gamma_1(R)^{-\delta} |R|^{1/2}\|a_{R}\|_{L^2}.
\end{eqnarray*}

\noindent
\noindent The similar argument as above shows that

\begin{eqnarray*}
 D_{12}
 &\leq& C \ell(I)^{-2}|J|^{1/2} \int\limits_{(100l)^c}\int\limits_{10I}\
\|{\widetilde K}^{(1)}{(x_1,y_1)}-{{\widetilde K}^{(1)}_{(2t_1^2, \, 0)}}{(x_1,y_1)}\|_{(L^2\rightarrow L^2)} \\
&&\hskip.5cm\times\|(a_{R,2})(y_1, \cdot)\|_{L^2(dy_2)}\ dy_1dx_1\\[5pt]
&\leq& C \ell(I)^{-2}|J|^{1/2}\gamma_1(R)^{-\delta}
\int_{3I}\| a_{R,2}(y_1, \cdot)\|_{L^2(dy_2)}dy_1\\[5pt]
&\leq& C
\ell(I)^{-2}|R|^{1/2}\gamma_1(R)^{-\delta}\| a_{R,2}\|_{L^2}.
\end{eqnarray*}

\noindent From   estimates of $D_{11}$ and $D_{12}$, we use the
property of $(H^1_{L_1, L_2}, 2, M)$ atoms   and
Journ\'e's covering lemma to get

\begin{eqnarray*}\label{estimate of I1}
 \lefteqn{\sum_{R\in m(\Omega)}\int_{(100l)^c\times
100J}|T(a_R)(x)|dx}\\
&\leq& C \sum_{R\in m(\Omega)} \gamma_1(R)^{-\delta} |R|^{1/2}\Big(\|a_{R}\|_2
+\ell(I)^{-2}\| a_{R,2}\|_{L^2}\Big)\nonumber\\
&\leq& C\Big(\sum_{R\in
m(\Omega)}\gamma_1(R)^{-2\delta} |R| \Big)^{1/2}\bigg\{ \Big(\sum_{R\in
m(\Omega)}\|a_{R}\|_2^2\Big)^{1/2} +\Big(\sum_{R\in
m(\Omega)}\ell(I)^{-4}\| a_{R,2}\|_{L^2}^2\Big)^{1/2}\bigg\}\nonumber\\
&\leq & C |\Omega|^{-1/2} |\Omega|^{1/2}
 \leq   C.\nonumber
\end{eqnarray*}

Consider the term $D_{2} $. We use  an argument in (\ref{eID}) and the fact that $a_R=(L_1\otimes L_2 ) b_R=
(1\!\!1_1\otimes L_2) a_{R,1}=(L_1\otimes 1\!\!1_2)a_{R,2}$ to obtain

 \begin{eqnarray*}\label{eID12}
 \hspace{0.3cm} D_2\!&=&\!\!\int\limits_0^{\sqrt{2}t_1}\int\limits_0^{\sqrt{2}t_2}\!\!\!\iint\limits_{(100l)^c\times (100J)^c}\!\!\!
 s_1 s_2  \big|T\circ\big( (1\!\!1_1-e^{-s_1^2L})\otimes (1\!\!1_2-e^{-s_2^2L})\big)(a_{R})(x_1,x_2) \big|dx_1dx_2
{ds_1ds_2 \over  t_1^2 t_2^{2}} \nonumber\\
&+& {  2 t_2^{-2}} \int\limits_0^{\sqrt{2}t_1}\!\!\iint\limits_{(100l)^c\times (100J)^c}\!\! s_1
\big|T\circ\big( (1\!\!1_1-e^{-s_1^2L}) \otimes  (1\!\!1_2-e^{-2t_1^2L})  \big)(a_{R, 2})(x_1,x_2) \big|dx_1dx_2 {ds_1  \over  t_1^2  }\nonumber\\
&+& { 2t_1  ^{-2}} \int\limits_0^{\sqrt{2}t_2}\!\!\iint\limits_{(100l)^c\times (100J)^c} \!\!s_2\big|T\circ\big(  (1\!\!1_1-e^{-2t_1^2L})
  \otimes (1\!\!1_2-e^{-s_2^2L})\big)(a_{R, 1})(x_1,x_2) \big|dx_1dx_2{ ds_2 \over   t_2^{2}}\nonumber\\
&+& {  4(t_1 t_2)^{-2}} \iint\limits_{(100l)^c\times (100J)^c}\!\!
\big|T\circ\big( (1\!\!1_1-e^{-2t_1^2L})  \otimes (1\!\!1_2-e^{-2t_2^2L})  \big)(b_{R})(x_1,x_2) \big|dx_1dx_2 \nonumber\\
&=& D_{21}+D_{22}+D_{23}+D_{24}.
\end{eqnarray*}

\noindent To estimate the term $D_{21}$, we use the use condition (\ref{cond3-}) to obtain

\begin{eqnarray*}
 D_{21}
&=& \int\limits_0^{\sqrt{2}t_1} \int\limits_0^{\sqrt{2}t_2}\!\!\!\iint\limits_{(100l)^c\times (100J)^c}\!\!\!
  s_1s_2 \Big|\int_{3R}
 \Delta K_{(s_1^2, s_2^2)}(x_1,y_1,x_2,y_2)
a_{R}(y_1,y_2)dy_1dy_2 \Big|dx_1dx_2\  {ds_1ds_2 \over  t_1^2 t_2^{2}}\\[5pt]
&\leq & \int\limits_0^{\sqrt{2}t_1} \int\limits_0^{\sqrt{2}t_2}\!\!\!\int_{3R}
\Big(\int_{\substack{|x_1-y_1|>\gamma_1t_1\\ |x_2-y_2|>\gamma_2t_2}}\big|\Delta K_{(t_1^2,t_2^2)}(x_1,y_1,x_2,y_2)\big|dx_1dx_2\Big)
 \big|a_{R}(y_1,y_2)\big| dy_1dy_2   {ds_1ds_2 \over  t_1^2 t_2^{2}}\\[5pt]
&\leq & C\gamma_1(R)^{-\delta}\ \gamma_2(R)^{-\delta}   \big\|a_{R}\big\|_{L^1(3R)}   \int\limits_0^{\sqrt{2}t_1}\!\!\int\limits_0^{\sqrt{2}t_2} s_1s_2
{ds_1ds_2 \over  t_1^2 t_2^{2}} \\[5pt]
&\leq & C\gamma_1(R)^{-\delta} |R|^{1/2} \|a_{R}\|_{L^2} \ \ \ \ \ \ ({\rm since }\ \gamma_2(R)\geq 1).
\end{eqnarray*}

\noindent The similar argument as above shows that

\begin{eqnarray*}
D_{22}+D_{23}+D_{24}&\leq&  C\gamma_1(R)^{-\delta} |R|^{1/2} \\[3pt]
&& \times  \Big(\ell(J)^{-2} \|a_{R,1}\|_{L^2}
+  \ell(I)^{-2}
\|a_{R,2}\|_{L^2}
 +\ell(I)^{-2}\ell(J)^{-2}
\|b_R\|_{L^2}\Big).
\end{eqnarray*}

\noindent
From the estimate of terms $D_{21}$, $D_{22}$, $D_{23}$ and $D_{24}$,   the H\"older inequality,
Journ\'e's covering lemma and the properties of $(H^1_{L_1, L_2}, 2, M)$ atoms, we have

\begin{eqnarray}\label{estimate of I2}
\sum_{R\in m(\Omega)}\int_{(100l)^c\times (100J)^c}|T(a_R)(x)|dx
\leq  C |\Omega|^{-1/2} |\Omega|^{1/2} \leq  C.
\end{eqnarray}

\noindent
Now, think of replacing $\Omega$ by ${\widetilde\Omega}$. For each $R=I\times J
 \in m(\Omega)$ there corresponds a new rectangle $R'=l\times J\in
 m_1({\widetilde\Omega})$. Since $Q$ is the longest dyadic interval containing $J$
 so that $l\times Q\subseteq {\widetilde \Omega}$, then

 \begin{eqnarray}
\int_{\mathbb{R}^{n_1}\times(100Q)^c}|T(a_R)(x)|dx\leq
c|R|^{1/2}\gamma^{-\delta}_2(R', {\widetilde \Omega})
\|a_R\|_{L^2(dydt/(t_1t_2))}. \label{e3.20}
\end{eqnarray}

\noindent
The claim is that $\sum_{R\in m(\Omega)}|R|\gamma^{-2\delta}_2(R', {\widetilde \Omega})
\leq C|\Omega|.$ This is so because if $R_1$ and $R_2$ are in
$m(\Omega)$ with $R'_1=R'_2$ then $R_1\cap R_2=\emptyset$ or
$R_1=R_2$. It follows that

\begin{eqnarray*}
\sum_{R\in m(\Omega)}|R|\gamma_2^{-2\delta}(R', {\widetilde
\Omega})&\leq& \sum_{\overline{R}\in m_1({\widetilde
\Omega})}\Big(\sum_{\overline{R}=R'}|R|\Big)
\gamma_2^{-2\delta}(\overline{R}, {\widetilde \Omega})\\
&\leq& \sum_{\overline{R}\in m_1({\widetilde \Omega})}|\overline{R}|
\gamma_2^{-2\delta}(\overline{R}, {\widetilde \Omega})\\
&\leq& C|{\widetilde \Omega}|\leq C|\Omega|.
\end{eqnarray*}

 \noindent
  Estimate (\ref{T alpha uniformly bd on outside of Omega}) is obtained
 and  then we obtain the proof
of Theorem \ref{theorem T from
Hp to Lp} for   $f\in H^1_{L_1, L_2}({\mathbb R}^{n_1}\times {\mathbb R}^{n_2}) \cap L^2({\mathbb R}^{n_1}\times {\mathbb R}^{n_2})$.
It then follows from
   a standard
argument  that  for any $f\in H^1_{L_1,L_2}({\mathbb R}^{n_1}\times
{\mathbb R}^{n_2})$, $f$  has an atomic  decomposition
(\ref{e3.12}).  See, for example, Chapter III of  \cite{St}.
 This completes the proof Theorem \ref{theorem T from
Hp to Lp}.

\end{proof}

\bigskip

\section{Applications: boundedness of double Riesz transforms associated to Schr\"odinger operators
and multivariable spectral multipliers }
\setcounter{equation}{0}

In this section we shall   deduce  endpoint  estimates  of    a class  of examples of singular integrals with non-smooth kernels
including the double Riesz transforms associated to Schr\"odinger operators  and  the Marcinkiewicz-type  multipliers
for non-negative self-adjoint operators on product spaces.

\bigskip

\noindent
{\bf 5.1. The double Riesz transforms associated to Schr\"odinger operators }
For every $i=1,2$, we let $V_i\in L^1_{\rm loc}({\mathbb R}^{n_i})$ be a
nonnegative function on ${\mathbb R}^{n_i}$. The
Schr\"odinger operator with potential $V_i$ is defined  by
\begin{eqnarray}
L_i=-\triangle_{n_i}+V_i(x)\ \ \ \ {\rm on}\ {\mathbb R}^{n_i}, \ \ \ n_i\geq 1.
\label{e6.8}
\end{eqnarray}
The operator   $L_i$ is a self-adjoint  positive
definite operator on $L^2({\mathbb R}^{n_i})$.
From the Feynman--Kac formula, it is well-known that the    kernel $p_{t}(x_i,y_i)$
of the semigroup
$e^{-tL_i}$  satisfies the estimate
\begin{eqnarray}
0\leq p_{t}(x_i,y_i)\leq {1\over{ (4\pi t)^{n_i/2} }} e^{-{{|x_i-y_i|^2}\over 4t}}.
\label{e6.9}
\end{eqnarray}
 However,
  unless  $V_i$ satisfies additional conditions, the heat kernel can be
a discontinuous function of the
space variables and the H\"older continuity estimates may fail to
hold. See, for example, \cite{Da}.

 Consider the double Riesz transform $T:=\nabla L_1^{-1/2}\otimes \nabla L_2^{-1/2}$
associated to the operators $L_i$. An alternative definition is

\begin{eqnarray}\label{e8.16}
Tf(x_1, x_2)={1\over 4 {\pi}}\int_0^{\infty}\int_0^{\infty}\big(\nabla_{x_1} e^{-t_1L_1}\otimes \nabla_{x_2}
e^{-t_2L_2}\big)f(x_1, x_2){dt_1 dt_2\over \sqrt{t_1 t_2}}.
\end{eqnarray}

\noindent
It was proved in   \cite{Si2}  that for every $i=1,2,$ the Riesz transform
$\nabla L_i^{-1/2}$ is bounded on $L^p({\Bbb R}^{n_i} )$ for $1<p\leq 2$. See also  \cite{CD, DOY, Ou}.
Hence, by using iteration argument, the double Riesz transform $T$
 is bounded on $L^p({\Bbb R}^{n_1}\times {\Bbb R}^{n_2})$ for $1<p\leq 2$.

 \medskip

\begin{theorem}\label{th6.0}  Assume that
$L_i=- \triangle_{n_i} +V_i, i=1,2$, where $V_i\in L^1_{\rm loc}({\Bbb R}^{n_i})$ is a non-negative
function on ${\Bbb R}^{n_i}$. Then
the double Riesz transform $\nabla L_1^{-1/2}\otimes \nabla L_2^{-1/2}$
extends to a bounded operator from
$H_{L_1,L_2}^1(\mathbb{R}^{n_1}\times\mathbb{R}^{n_2})$ to
$L^1(\mathbb{R}^{n_1}\times\mathbb{R}^{n_2})$.
\end{theorem}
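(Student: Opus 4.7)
The strategy is to apply Theorem~\ref{theorem T from Hp to Lp} to $T:=\nabla L_1^{-1/2}\otimes\nabla L_2^{-1/2}$. I would work one scalar component at a time, writing $T=\partial^{(k_1)}L_1^{-1/2}\otimes\partial^{(k_2)}L_2^{-1/2}$, and sum the resulting bounds. By \cite{Si2}, each $R_i:=\partial^{(k_i)}L_i^{-1/2}$ is bounded on $L^2(\mathbb{R}^{n_i})$, and tensoring gives that $T$ is bounded on $L^2(\mathbb{R}^{n_1}\times\mathbb{R}^{n_2})$. It then remains to verify the three kernel conditions \eqref{cond1-}, \eqref{cond2-}, \eqref{cond3-}.

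The key observation is that the product structure of $T$ collapses these three conditions to a single one-parameter estimate. Writing $K_{R_i}(x_i,y_i)$ for the scalar kernel of $R_i$, one has
\[
K(x_1,y_1,x_2,y_2)=K_{R_1}(x_1,y_1)\,K_{R_2}(x_2,y_2),
\]
and the analogous identities hold for $K_{(t_1^2,0)}$, $K_{(0,t_2^2)}$, $K_{(t_1^2,t_2^2)}$, with the appropriate factor replaced by $K_{R_i e^{-t_i^2 L_i}}$. Consequently $\widetilde K^{(1)}(x_1,y_1)-\widetilde K^{(1)}_{(t_1^2,0)}(x_1,y_1)$ equals $[K_{R_1}(x_1,y_1)-K_{R_1 e^{-t_1^2 L_1}}(x_1,y_1)]\,R_2$ as an operator on $L^2(\mathbb{R}^{n_2})$, whose operator norm is precisely the scalar modulus times $\|R_2\|_{2\to 2}$. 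A direct cancellation calculation gives
\[
\Delta K_{(t_1^2,t_2^2)}(x_1,y_1,x_2,y_2)
=\bigl[K_{R_1}(x_1,y_1)-K_{R_1 e^{-t_1^2 L_1}}(x_1,y_1)\bigr]\bigl[K_{R_2}(x_2,y_2)-K_{R_2 e^{-t_2^2 L_2}}(x_2,y_2)\bigr],
\]
so all three conditions reduce to the single one-parameter Hörmander-type estimate
\[
\int_{|x-y|>\gamma t}\bigl|K_{R_i}(x,y)-K_{R_i e^{-t^2 L_i}}(x,y)\bigr|\,dx\leq C\gamma^{-\delta},\qquad \gamma\geq 2,\ i=1,2.
\]

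The main obstacle is to verify this one-parameter estimate for the Schr\"odinger Riesz transform $\nabla L_i^{-1/2}$ when $V_i\in L^1_{\mathrm{loc}}$ is merely nonnegative, since the kernel $K_{R_i}(x,y)$ need not be H\"older continuous in the space variables. I would use the subordination formula
\[
R_i=\frac{1}{\sqrt{\pi}}\int_0^\infty \partial^{(k_i)}e^{-sL_i}\,\frac{ds}{\sqrt{s}},
\]
split the $s$-integral at $s\sim t^2$ and at $s\sim|x-y|^2$, and estimate each piece using the Feynman--Kac bound \eqref{e6.9} together with the gradient bound $|\nabla_{x} p_s(x,y)|\leq Cs^{-(n+1)/2}e^{-|x-y|^2/cs}$, which holds for the Schr\"odinger semigroup when $V\geq 0$ via domination by the free heat kernel and standard analytic-semigroup arguments (see \cite{Ou, DOY, Si2}). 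This places $R_i$ squarely within the framework of non-smooth kernels of \cite{DM}, and the $L^1$ condition above follows by routine splitting and integration. Combining this one-parameter estimate with the factorizations above verifies \eqref{cond1-}--\eqref{cond3-}, and Theorem~\ref{theorem T from Hp to Lp} then delivers the claimed $H^1_{L_1,L_2}\to L^1$ boundedness of $\nabla L_1^{-1/2}\otimes\nabla L_2^{-1/2}$.
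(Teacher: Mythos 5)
Your overall architecture matches the paper's: you apply Theorem~\ref{theorem T from Hp to Lp}, observe that the tensor structure $K=K_1\cdot K_2$ factors all three conditions \eqref{cond1-}--\eqref{cond3-} down to a single one-parameter H\"ormander estimate for $\nabla L_i^{-1/2}(I-e^{-t^2L_i})$, and you correctly compute that $\Delta K_{(t_1^2,t_2^2)}$ factors as $\bigl(K_1-K_{1,t_1^2}\bigr)\bigl(K_2-K_{2,t_2^2}\bigr)$. That reduction is exactly what the paper does.

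However, there is a genuine gap in the key one-parameter estimate. You invoke a pointwise gradient bound
\[
|\nabla_x p_s(x,y)|\leq Cs^{-(n+1)/2}e^{-|x-y|^2/cs},
\]
claiming it holds for $-\Delta+V$ with $V\in L^1_{\mathrm{loc}}$, $V\geq 0$, ``via domination by the free heat kernel and standard analytic-semigroup arguments.'' This is false in general. Feynman--Kac domination gives $0\leq p_s(x,y)\leq (4\pi s)^{-n/2}e^{-|x-y|^2/4s}$, but domination of the kernel yields no control whatsoever on its spatial gradient; and analyticity of the semigroup controls \emph{time} derivatives of $p_s$ (as in \eqref{time derivative of heat hernel}), not spatial ones. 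The paper explicitly flags this: for merely locally integrable $V\geq 0$ the heat kernel may fail H\"older continuity in the space variables, which is the whole reason this operator falls outside the classical Calder\'on--Zygmund framework and outside Theorem~A. The correct replacement, and the ingredient the paper actually uses, is the \emph{weighted $L^2$ gradient estimate} from Proposition~3.1 of \cite{DOY},
\[
\int_{\mathbb{R}^n}\bigl|\nabla\widetilde p_u(x,y)\bigr|^2 e^{\beta|x-y|^2/u}\,dx\leq Cu^{-n/2-1},
\]
where $\widetilde p_u$ is the kernel of $u\frac{d}{du}e^{-uL}$. One then runs the subordination integral for $\nabla L^{-1/2}(I-e^{-t^2L})$, applies Cauchy--Schwarz in $x$ against the complementary Gaussian weight to convert the weighted $L^2$ bound into an $L^1$-tail bound on $\{|x-y|>\gamma t\}$, and integrates in $u$ and $s$. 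Without replacing the pointwise bound by this weighted $L^2$ estimate, your verification of \eqref{cond1-}--\eqref{cond3-} does not go through for the class of potentials the theorem asserts.
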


\begin{proof} This theorem is  a direct consequence of Theorem 4.1 
by verifying
conditions  (\ref{cond1-}),  (\ref{cond2-}) and  (\ref{cond3-}).

Let us verify condition   (\ref{cond1-}).  Following notation  in Theorem 4.1,
 we assume that   $K(x_1, y_1, x_2, y_2)$ is  an associated kernel of
the double Riesz transform  $T=\nabla L_1^{-1/2}\otimes \nabla L_2^{-1/2}$
in the sense of (\ref{def of singular integral operator}). From the definition, we
 can  write  in the form:

\begin{eqnarray}\label{e6.99}
K(x_1, y_1, x_2, y_2)=K_1(x_1, y_1)\cdot K_2(x_2, y_2),
\end{eqnarray}

\noindent
  where  $K_i(x_i, y_i)$ is  an associated kernel of the   Riesz transform  $\nabla L_i^{-1/2}, i=1,2.$
  Consider  the composite operators
$T\circ(e^{-t_1L_1}\otimes e^{-t_2L_2}), t_i\geq 0,$ which  have associated kernels
  $K_{(t_1, t_2)}(x_1,y_1,x_2,y_2)$ in the sense of (\ref{def of singular integral operator}).
Let  $k_{ (t_1^2, 0)}(x_1, y_1)  $ be  an associated kernel of the composite operator  $\nabla L_1^{-1/2}\big(I-e^{-t_1^2 L_1}\big).$
Then,

\begin{eqnarray}\label{e6.88}\hspace{1.2cm}
 K(x_1, y_1, x_2, y_2)- K_{(t_1^2, 0)}(x_1,y_1,x_2,y_2)
=  k_{ (t_1^2, 0)}(x_1, y_1)\cdot K_2(x_2, y_2).
\end{eqnarray}

 \noindent
The proof is done if we show  that
%
%

\begin{eqnarray}\label{e6.999}
\int_{|x_1-y_1|>\gamma_1 t_1} \big|k_{ (t_1^2, 0)}(x_1, y_1)\big| d
x_1 \leq C\gamma_1^{-\delta},\ \ \ \forall \gamma_1\geq 2
\end{eqnarray}

\noindent
for some   constants $\delta>0$ and $C>0$,  and then using  the fact that the  Riesz transform $\nabla L_2^{-1/2}$
is  bounded on $L^2(\mathbb{R}^{n_2}),$  we have

\begin{eqnarray*}
 &&\hspace{-1.5cm}\int_{|x_1-y_1|>\gamma_1t_1}
\|{\widetilde K}^{(1)}{(x_1,y_1)}-{{\widetilde K}_{(t_1^2, \, 0)}^{(1)}}{(x_1,y_1)}\|_{ (L^2({\mathbb R}^{n_2}),\, L^2({\mathbb R}^{n_2}) )}dx_1\\
&\leq&   \|\nabla L_2^{-1/2}\|_{L^2(\mathbb{R}^{n_2})\rightarrow L^2(\mathbb{R}^{n_2})}
 \int_{|x_1-y_1|>\gamma_1 t_1} \big|k_{ (t_1^2, 0)}(x_1, y_1)\big| d x_1
\\
&\leq&
C\gamma_1^{-\delta},
\end{eqnarray*}

\noindent which  proves  condition   (\ref{cond1-}).

 Let us prove estimate  (\ref{e6.999}). Let $\widetilde{p}_t(x_1, y_1)$ denote  the
kernels of the semigroup   $t{d\over dt}e^{-tL_1}$. The Riesz transform associated to $L_1$ is given by

\begin{eqnarray*}
\nabla L_1^{-1/2}={1\over
 \sqrt{\pi}}\int_0^\infty \nabla e^{-sL_1} {ds\over \sqrt{s}}.
\end{eqnarray*}

\noindent
Therefore,

\begin{eqnarray*}
\nabla L_1^{-1/2}\big(I-e^{-t_1^2 L_1}\big)&=&{1\over
 \sqrt{\pi}}\int_0^\infty \nabla e^{-sL_1}\big(I-e^{-t_1^2
L_1}\big){ds\over \sqrt{s}}\\
&=& -{1\over  \sqrt{\pi}}\int_0^\infty \int_s^{s+t_1^2} \nabla
\big(u{d\over du}e^{-uL_1}\big){du\over u} {ds\over \sqrt{s}},
\end{eqnarray*}

\noindent
and then the kernel $k_{ (t_1^2, 0)}(x_1, y_1)  $   of the composite operator  $\nabla L_1^{-1/2}\big(I-e^{-t_1^2 L_1}\big) $ can be written
in the form:

\begin{eqnarray*}
k_{(t_1^2,0)}(x_1,y_1) &=& -{1\over
 \sqrt{\pi}}\int_0^\infty\int_s^{s+t_1^2} \nabla
\widetilde{p}_{u}(x_1,y_1){du\over u} {ds\over \sqrt{s}}.
\end{eqnarray*}

\noindent
Note that by Proposition 3.1 of \cite{DOY},

\begin{eqnarray}\label{e6.9999}
\int_{\mathbb{R}^n}|\nabla
\widetilde{p}_{u}(x,y)|^2e^{\beta{|x-y|^2\over u}}dx\leq
Cu^{-{n\over 2}-1}
\end{eqnarray}

\noindent
for some constants
$\beta>0$ and $C>0$. Let $M> {n\over 2}$. There exists a positive constant  $C$   depending only on $n,\beta$ such that
\begin{eqnarray}\label{e6.99991}
\int_{|x_1-y_1|>\gamma_1 t_1} e^{-\beta{|x_1-y_1|^2\over u}}dx_1\leq
C (\gamma_1 t_1)^{n-2M} u^M.
\end{eqnarray}

\noindent
Using estimates (\ref{e6.9999}), (\ref{e6.99991}) and  the  H\"older  inequality,   we have

\begin{eqnarray*}
&&\hspace{-1.5cm}\lefteqn{\int_{|x_1-y_1|>\gamma_1 t_1} \big|k_{ (t_1^2,\, 0)}(x_1,
y_1)\big| d x_1}\\
& \leq&  {1\over
2\sqrt{\pi}}\int_0^\infty\int_s^{s+t_1^2}\Big(\int_{|x_1-y_1|>\gamma_1
t_1} \big|\nabla \widetilde{p}_{t_1}(x_1,y_1)\big|^2
e^{\beta{|x_1-y_1|^2\over u}}dx_1\Big)^{1\over2}\\ [2pt]
&&\hspace{3cm}\times \Big(\int_{|x_1-y_1|>\gamma_1 t_1}
e^{-\beta{|x_1-y_1|^2\over u}}dx_1\Big)^{1\over2}{du\over u}
{ds\over \sqrt{s}} \\
& \leq&  C(\gamma_1 t_1)^{n-2M\over2}  \int_0^\infty\int_s^{s+t_1^2}
u^{-{n\over4}-{1\over2}+{M\over 2}} {du\over u} {ds\over \sqrt{s}}.
\end{eqnarray*}

\noindent
Now we divide the last term into

\begin{eqnarray*}
C(\gamma_1 t_1)^{n-2M\over2} \bigg(
\int_0^{t_1^2}+\int_{t_1^2}^\infty\bigg)\int_s^{s+t_1^2}
u^{-{n\over4}-{1\over2}+{M\over 2}} {du\over u} {ds\over \sqrt{s}}.
\end{eqnarray*}

\noindent
Let ${n/2}+1<M<{n/2}+2$. By using  elementary integration, we show that both of the above  terms
 are bounded by $C \gamma_1^{(n-2M)/2}$. This proves estimate
(\ref{e6.999}), and then the proof of  condition   (\ref{cond1-}) is complete.

The similar argument as above shows   condition (\ref{cond2-}).
For
condition  (\ref{cond3-}), it is very easy to
obtain by  an iteration argument, and we skip it here. Hence, the
proof of Theorem~\ref{th6.0} is finished.

 \end{proof}

 \bigskip

\noindent
{\bf 5.2.  General  multivariable spectral multipliers   }
Suppose that  $L_1$ and $L_2$ are  non-negative self-adjoint operators  such that the corresponding
heat kernels satisfy Gaussian bounds ${\rm (GE)}$.
Let us  explain  the definition of multivariable spectral multipliers. We consider two self-adjoint
operators  $L_i, i=1,2,$ acting on spaces $L^2(\mathbb{R}^{n_i})$. The tensor product operators  $L_1\otimes1\!\!1_2$
and $1\!\!1_1\otimes L_2$ act on $L^2(\mathbb{R}^{n_1}\times \mathbb{R}^{n_2})$, where $\mathbb{R}^{n_1}\times \mathbb{R}^{n_2}$
is the Cartesian product of $\mathbb{R}^{n_1}$ and $ \mathbb{R}^{n_2}$ with the product measure. To simplify
notation we will write $L_1$ and $L_2$ instead of   $L_1\otimes1\!\!1_2$ and $1\!\!1_1\otimes L_2$. Note that there is a
unique spectral decomposition $E$ such that for all Borel subsets $A\subset {\mathbb R}^2$, $E(A)$ is a projection
on $L^2(\mathbb{R}^{n_1}\times \mathbb{R}^{n_2})$ and such that for any Borel subsets $A_j\subset {\mathbb R}, j=1,2,$
one has

$$
E(A_1\otimes A_2)= E_{L_1}(A_1)\otimes E_{L_2}(A_2).
$$

\noindent
Hence for any function $F: {\mathbb R}^2\rightarrow {\mathbb C}$ one can define the operators $F(L_1, L_2)$
acting as operator on space $L^2(\mathbb{R}^{n_1}\times \mathbb{R}^{n_2})$ by the formula

\begin{eqnarray}\label{e7.1}
F(L_1, L_2)=\int_{{\mathbb R}^2} F(\lambda_1, \lambda_2)dE(\lambda_1, \lambda_2).
\end{eqnarray}

\noindent
A straightforward  variation of classical spectral theory argument shows that for any bounded Borel function
 $F: {\mathbb R}^2\rightarrow {\mathbb C}$   the operator $F(L_1, L_2)$ is continuous on
 $L^2(\mathbb{R}^{n_1}\times \mathbb{R}^{n_2})$ and its norm is bounded by $\|F\|_{L^{\infty}}$.
Assume that the operator $F(L_1, L_2)$  has  an associated kernel $K(x_1,y_1,x_2,y_2)$ in the sense that

\begin{eqnarray}\label{eff}
F(L_1, L_2)f(x_1,x_2)=\iint_{\mathbb{R}^{n_1}\times\mathbb{R}^{n_2}}
K(x_1,y_1,x_2,y_2)f(y_1,y_2)dy_1dy_2,
\end{eqnarray}

\noindent
and the above formula holds for each continuous function $f$ with compact support, and for almost all $(x_1, x_2)$
not in the support of $f$.

 In this section we are looking for necessary smoothness conditions on function $F$ so that the operators $F(L_1, L_2)$
 is bounded on Hardy space $H^1_{L_1,
L_2}(\mathbb{R}^{n_1}\times\mathbb{R}^{n_2})$   and   on $L {\rm
log}^+ L$. The condition on function $F$ which we use is a variant
of the differentiability condition in   Fourier multiplier result,
see \cite{H}.
 We shall be working with an auxiliary nontrivial function $\omega$ of
compact support. The choice of $\omega$ that will be in the statements
is not unique. Let $\omega$ be a $C_0^\infty(\mathbb{R})$ function
such that
\begin{eqnarray}\label{function phi}
{\rm supp}\, \omega \subseteq ({1\over 4},1)\ \ \ {\rm and}\ \ \
\sum_{\ell\in\mathbb{Z}}\omega(2^{-\ell}\lambda)=1 \ \ \ {\rm for\
all}\ \ \lambda>0.
\end{eqnarray}


\noindent
Set

$$
\eta_{1}(\lambda_1)=\omega(\lambda_1), \ \ \ \eta_{2}( \lambda_2)=\omega(\lambda_2), \ \ \ \
\eta_{1,2}(\lambda_1, \lambda_2)=\omega(\lambda_1)\omega(\lambda_2).
$$

\noindent
We define a family of dilations $\{\delta_{t_1, \, t_2}\}$ acting on functions  $F: {\mathbb R}^2\rightarrow {\mathbb C}$ by the formula
\begin{eqnarray}\label{eeta}
\delta_{(t_1, \, t_2)}F(\lambda_1, \lambda_2)=F(t_1\lambda_1, t_2\lambda_2).
\end{eqnarray}
Consider the following multiparameter Sobolev norm on functions $F$
defined on $\mathbb{R}^{n_1}\times\mathbb{R}^{n_2}=\{(x,y)\}$

$$
\|F\|_{W_{s_1,s_2}^2(\mathbb{R}^{n_1}\times\mathbb{R}^{n_2})}= \|(
I+\Delta_x)^{s_1/2}( I+\Delta_y)^{s_2/2}F\|_{ L^2(\mathbb{R}^{n_1}\times \mathbb{R}^{n_2})},
$$

\noindent where $\Delta_x$ and $\Delta_y$ are the standard Laplace
operators on $\mathbb{R}^{n_1}$ and $\mathbb{R}^{n_2}$,
respectively.  Now we formula our main spectral multiplier result.

\begin{theorem}\label{th6.1} Suppose that  $L_1$ and $L_2$ are  non-negative self-adjoint operators  such that the corresponding
heat kernels satisfy Gaussian bounds ${\rm (GE)}$.   Let $s_1> {n_1+1\over
2}$, $s_2>{n_2+1\over 2}$. Then for any
 bounded  Borel function $F$ such that

\begin{eqnarray}\label{ec}
C_{F,\phi,s}&=&\sup_{t_1>0}\|\eta_{1}\delta_{(t_1, 1)}F
\|_{W^{2}_{s_1,s_2}}
+\sup_{t_2>0}\|\eta_{2}\delta_{(1,t_2)}F \|_{W^{2}_{s_1,s_2}}\nonumber\\
&&\hspace{1.8cm} +\sup_{t_1, t_2>0}\|\eta_{1,2}\delta_{(t_1, t_2)}F
\|_{W^{2}_{s_1,s_2}} <\infty,
\end{eqnarray}

\noindent
the operator $F(L_1,L_2)$ extends to a bounded operator from
$H_{L_1,L_2}^1(\mathbb{R}^{n_1}\times\mathbb{R}^{n_2})$ to
$L^1(\mathbb{R}^{n_1}\times\mathbb{R}^{n_2})$. 

\end{theorem}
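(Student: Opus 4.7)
The plan is to derive Theorem \ref{th6.1} as a direct application of Theorem \ref{theorem T from Hp to Lp} with $T = F(L_1,L_2)$. The $L^2$-boundedness of $F(L_1,L_2)$ is immediate from spectral theory, since (\ref{ec}) and the Sobolev embedding (which holds because $s_i>(n_i+1)/2 > n_i/2$) imply $\|F\|_\infty \leq C\, C_{F,\phi,s}$. Thus the task reduces to verifying the three kernel conditions (\ref{cond1-}), (\ref{cond2-}), (\ref{cond3-}) for $T=F(L_1,L_2)$.

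The decisive observation is that each difference kernel is itself associated to an explicit spectral multiplier. Since $L_1$ and $L_2$ commute on the tensor product, $T\circ(e^{-t_1^2L_1}\otimes 1\!\!1_2)$ has symbol $F(\lambda_1,\lambda_2)\,e^{-t_1^2\lambda_1}$, so
$$
T - T\circ(e^{-t_1^2L_1}\otimes 1\!\!1_2) \;=\; G^{(1)}_{t_1}(L_1,L_2), \qquad G^{(1)}_{t_1}(\lambda_1,\lambda_2)=F(\lambda_1,\lambda_2)\bigl(1-e^{-t_1^2\lambda_1}\bigr),
$$
and $\Delta K_{(t_1^2,t_2^2)}$ is the kernel of the operator with symbol $F(\lambda_1,\lambda_2)(1-e^{-t_1^2\lambda_1})(1-e^{-t_2^2\lambda_2})$, which carries a product structure decoupling the two variables. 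Using the partition of unity $\omega$ from (\ref{function phi}), write $\omega_\ell(\lambda):=\omega(2^{-\ell}\lambda)$ and decompose
$$
F(\lambda_1,\lambda_2) \;=\; \sum_{\ell_1,\ell_2\in\mathbb{Z}} F_{\ell_1,\ell_2}(\lambda_1,\lambda_2), \qquad F_{\ell_1,\ell_2}(\lambda_1,\lambda_2):=F(\lambda_1,\lambda_2)\,\omega_{\ell_1}(\lambda_1)\,\omega_{\ell_2}(\lambda_2),
$$
with strong $L^2$-convergence of the corresponding operators.

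For each dyadic piece, set $H_{\ell_1,\ell_2}(\mu_1,\mu_2):=F_{\ell_1,\ell_2}(\mu_1^2,\mu_2^2)$, extended evenly, and apply the Fourier inversion formula (\ref{XCP}) in each variable together with the finite-propagation-speed property (\ref{e3.12}) to write
$$
F_{\ell_1,\ell_2}(L_1,L_2) \;=\; \frac{1}{(2\pi)^2}\iint \widehat{H}_{\ell_1,\ell_2}(\tau_1,\tau_2)\,\cos(\tau_1\sqrt{L_1})\otimes\cos(\tau_2\sqrt{L_2})\,d\tau_1\,d\tau_2.
$$
After rescaling $\mu_i \mapsto 2^{\ell_i/2}\mu_i$, Plancherel converts the Sobolev hypothesis (\ref{ec}) into the weighted bound
$\int (1+|\tau_1|)^{2s_1}(1+|\tau_2|)^{2s_2}|\widehat{H}_{\ell_1,\ell_2}|^2 d\tau \leq C\,C_{F,\phi,s}^2\,2^{\ell_1}2^{\ell_2}$ (up to the appropriate Jacobian). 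Combining with the Gaussian heat-kernel bound and Cauchy--Schwarz---with $s_i > (n_i+1)/2$ providing the extra half-power beyond $n_i/2$ needed to make the spatial integrals converge with a quantitative decay---yields an off-diagonal estimate of the schematic form
$$
\int_{|x_1-y_1|>\gamma_1 t_1}\!\|\widetilde{K}^{(1)}_{G^{(1)}_{t_1},\,\ell_1,\ell_2}(x_1,y_1)\|_{L^2\to L^2}\, dx_1 \;\leq\; C\,C_{F,\phi,s}\,\gamma_1^{-\delta}\,\min\bigl\{1,\,t_1^2\, 2^{\ell_1}\bigr\},
$$
where the $\min$ factor comes from bounding $(1-e^{-t_1^2\lambda_1})$ pointwise by either $1$ or $t_1^2 2^{\ell_1}$. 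Summing over $\ell_1$ (geometric series in both regimes $t_1^2 2^{\ell_1}\lessgtr 1$) yields (\ref{cond1-}); condition (\ref{cond2-}) follows by symmetry, and (\ref{cond3-}) by the tensor-product analogue of the same argument exploiting the decoupled factor $(1-e^{-t_1^2\lambda_1})(1-e^{-t_2^2\lambda_2})$.

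The main obstacle is the dyadic bookkeeping and the precise accounting of the decay rates. In the one-parameter estimate there are two regimes ($2^{\ell_i} t_i^2\leq 1$ and $>1$) and in (\ref{cond3-}) one faces four combined regimes; in each case the geometric gain from the $(1-e^{-t_i^2\lambda_i})$ factors must balance against the size of the kernel at scale $2^{-\ell_i/2}$ compared to $t_i$, and the leftover Sobolev regularity $s_i-(n_i+1)/2>0$ must be converted into the explicit power $\gamma_i^{-\delta}$. The product structure of $\Delta K_{(t_1^2,t_2^2)}$ makes (\ref{cond3-}) essentially the tensor product of the one-parameter analysis, so the crux of the technical effort lies in executing the single-parameter estimate cleanly---an argument closely parallel to the classical H\"ormander spectral multiplier method in the non-smooth kernel setting (cf.\ \cite{DOS}).
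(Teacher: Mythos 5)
Your overall strategy is sound and parallels the paper's argument in outline: reduce the theorem to verifying the three kernel conditions of Theorem \ref{theorem T from Hp to Lp}, observe that each difference kernel is itself a spectral multiplier with a factor that vanishes near the origin, dyadically decompose $F$, establish a weighted $L^2$ off-diagonal estimate on each dyadic piece, and sum. The difference from the paper is in the mechanism for the single-piece off-diagonal estimate: the paper (Proposition \ref{prop6.2}) writes each dyadic piece via the complex-time heat semigroup $e^{(i\xi-1)L/R^2}$ and Lemma 4.2 of \cite{DOS}, whereas you go through $\cos(\tau\sqrt{L})$ and finite propagation speed. Both are legitimate under Gaussian heat kernel bounds, so this is a genuine alternative route. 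Two smaller cosmetic points: the paper localizes in one variable at a time (a single sum over $\ell$ for each of the three conditions, not a double sum $\ell_1,\ell_2$), which cuts down bookkeeping; and the Sobolev embedding yielding $\|F\|_\infty\lesssim C_{F,\phi,s}$ is one-dimensional in each spectral variable (requiring only $s_i>1/2$), so the justification via ``$s_i>n_i/2$'' is the wrong reason but the conclusion still holds.

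However, there is a genuine gap in your summation over $\ell_1$. Your stated single-piece estimate,
$$
\int_{|x_1-y_1|>\gamma_1 t_1}\|\widetilde{K}^{(1)}_{G^{(1)}_{t_1},\,\ell_1,\ell_2}(x_1,y_1)\|_{L^2\to L^2}\, dx_1 \;\leq\; C\,C_{F,\phi,s}\,\gamma_1^{-\delta}\,\min\bigl\{1,\,t_1^2\, 2^{\ell_1}\bigr\},
$$
does not yield a convergent sum over $\ell_1\in\mathbb{Z}$: in the regime $t_1^2\,2^{\ell_1}\geq 1$ the bound degenerates to the constant $\gamma_1^{-\delta}$ and the series diverges. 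The $\min$ you record captures only the smallness of $(1-e^{-t_1^2\lambda_1})$ and provides decay for small $\ell_1$; you have thrown away the decay for large $\ell_1$. That decay must come from the off-diagonal integration itself: for the piece localized at $\lambda_1\sim 2^{\ell_1}$, the spatial scale of the kernel is $2^{-\ell_1/2}$, and the region $|x_1-y_1|>\gamma_1 t_1$ then yields a gain of order $(\gamma_1 t_1 2^{\ell_1/2})^{-\eta}$, not merely $\gamma_1^{-\eta}$. In the wave-equation picture this is exactly the point that finite propagation speed restricts the $\tau_1$-integral to $|\tau_1|\gtrsim \gamma_1 t_1 2^{\ell_1/2}$ after rescaling, so the weight $(1+|\tau_1|)^{2s_1}$ is what supplies the extra $2^{-\eta\ell_1/2}$ you need. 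Compare the structure of (\ref{e6.5}) in the paper: the bound there carries the factor $\gamma_1^{-\eta}\min\{(t_1R_1)^{-\eta},(t_1R_1)^2\}$ with $R_1^2=2^{\ell}$, and it is precisely the $(t_1R_1)^{-\eta}$ branch, obtained from $(\gamma_1 t_1 R_1)^{-\eta}$, that makes the dyadic sum geometric in both regimes. Your argument becomes correct once you retain that factor, but as written the summation step fails.
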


\medskip

\noindent
{\bf Remarks.}

\medskip

1) We assume that $L_1$ and $L_2$ are positive so $F(L_1, L_2)$ depends only on the restriction of $F$ to
$[0, \infty)^2$. However, it is easier to state Theorem~\ref{th6.1} if one considers functions $F: {\mathbb R}^2\rightarrow {\mathbb C}$.

\medskip

2)  Let $k= \big[{n_1 \over 2}\big]+1, \ell=\big[{n_2 \over 2}\big]+1$, where $[a]$ is the integer part of $[a]$.
It is not difficult to check that if
$F\in C^{k}({\mathbb R})\times C^{\ell}({\mathbb R})$ and

$$
\big|\partial_{\xi}^{\alpha} \partial_{\eta}^{\beta} F(\xi, \eta)\big|\leq C |\xi|^{-\alpha} |\eta|^{-\beta}
 $$

\noindent
where $0\leq\alpha\leq k$ and $0\leq\beta\leq \ell$, then $F$ satisfies condition (\ref{ec}), see \cite{H}, \cite{DOS} and \cite{Ou}.

\bigskip

The proof of Theorem~\ref{th6.1} will be achieved in several steps. First,
we note that for every $t_1>0,$   the composite operator $ F({L_1},{L_2})(I-e^{-t^2_1L_1})$ is  bounded
on  $L^2(\mathbb{R}^{n_1}\times \mathbb{R}^{n_2})$, and assume that it has an  associated kernel
 $K_{F({L_1},{L_2})(I-e^{-t^2_1L_1})}(x_1, y_1, x_2, y_2)$ in the sense of (\ref{eff}). Similarly, assume that  in the sense of (\ref{eff}),
  the composite operator  $F({L_1},{L_2})(I-e^{-t^2_2L_2})$  has
 an associated kernel     $K_{F({L_1},{L_2})(I-e^{-t^2_2L_2})}(x_1, y_1, x_2, y_2)$,    and the composite operator  $F({L_1},{L_2})(I-e^{-t^2_1L_1})(I-e^{-t^2_2L_2})  $
has an associated kernel  $K_{F({L_1},{L_2})(I-e^{-t^2_1L_1})(I-e^{-t^2_2L_2})}(x_1, y_1, x_2, y_2)$.
   Following (4.2), we set

 $$  {\widetilde {K}}^{(1)}_{F({L_1},{L_2})(I-e^{-t^2_1L_1})}
 (x_1,y_1)(x_2, y_2)=K_{F({L_1},{L_2})(I-e^{-t^2_1L_1})}(x_1, y_1, x_2, y_2)
 $$
 $$  {\widetilde {K}}^{(2)}_{F({L_1},{L_2})(I-e^{-t^2_2L_2})}
 (x_2,y_2)(x_1, y_1)=K_{F({L_1},{L_2})(I-e^{-t^2_2L_2})}(x_1, y_1, x_2, y_2).
 $$
    We now state the following proposition.

\medskip

\begin{prop}\label{prop6.2}  Assume that the assumptions of  Theorem~\ref{th6.1}
are satisfied. Then we have

\medskip

(i)\,
   Let $R_1>0, s_1>0.$
 Then for any $s_2>1/2$, there exist  constants $C=C(s_1, s_2, \epsilon)$ and $\eta>0$ such that
 for
all $\gamma_1 \geq 2$ and $t_1>0,$

\begin{eqnarray}\label{e6.5}\hspace{0.5cm}
\hspace{3cm}&&\hspace{-3.8cm}\int_{|x_1-y_1|>\gamma_1 t_1}\big\|{\widetilde {K}}^{(1)}_{F({L_1},{L_2})(I-e^{-t^2_1L_1})} (x_1,y_1)\big\|_{L^2({\mathbb R}^{n_2})\rightarrow
L^2({\mathbb R}^{n_2})}^2 \big(1+ R_1|x_1-y_1|\big)^{s_1}dx_1\nonumber\\[3pt]
&\leq& C\gamma_1^{-\eta}
R_1^{n_1}\min\big\{\big(t_1R_1\big)^{-\eta}, (t_1R_1)^2  \big\} \,
\|\delta_{(R_1, 1)} F\|^2_{W^{\infty}_{{s_1+1+\epsilon\over
2},{s_2\over 2}}}
\end{eqnarray}

\noindent
for   all Borel functions $F$
such that ${\rm supp}\ F\subseteq[0,R_1^2]\times[0,\infty)$.

\medskip

(ii)\,
 Let $R_2>0, s_2>0.$ Then for any $s_1>1/2$, there exist  constants $C=C(s_1, s_2, \epsilon)$ and $\eta>0$ such that   for
all $\gamma_2 \geq 2$ and $t_2>0,$

\begin{eqnarray}\label{e6.6}\hspace{0.5cm}
\hspace{3cm}&&\hspace{-3.8cm}\int_{|x_2-y_2|>\gamma_2 t_2}\big\|{\widetilde {K}}^{(2)}_{F({L_1},{L_2})(I-e^{-t^2_2L_2})} (x_2,y_2)\big\|_{L^2({\mathbb R}^{n_1})\rightarrow
L^2({\mathbb R}^{n_1})}^2 \big(1+ R_2|x_2-y_2|\big)^{s_2}dx_2\nonumber\\[3pt]
&\leq& C\gamma_2^{-\eta}
R_2^{n_2}\min\big\{\big(t_2R_2\big)^{-\eta}, (t_2R_2)^2  \big\} \,
\|\delta_{(1, R_2)} F\|^2_{W^{\infty}_{ {s_1\over
2},{s_2+1+\epsilon\over 2}} }
\end{eqnarray}

\noindent
for   all Borel functions $F$
such that ${\rm supp}\ F\subseteq [0, \infty)\times [0,R_2^2]$.

\medskip

(iii)\,
 Let $R_i>0, s_i>0, i=1,2.$ Then   there exist    constants $C=C(s_1, s_2, \epsilon)$ and $\eta>0$ such that
 for
all $\gamma_1, \gamma_2 \geq 2$ and $t_1, t_2>0,$

\begin{eqnarray}\label{e6.7}
&&\int_{\substack{|x_1-y_1|>\gamma_1t_1\\ |x_2-y_2|>\gamma_2t_2}}
\big|
K_{F({L_1},{L_2})(I-e^{-t^2_1L_1})(I-e^{-t^2_2L_2})} (x_1,
y_1, x_2,y_2) \big|^2
  \prod_{i=1}^2\big(1+ R_i|x_i-y_i|\big)^{s_i}dx_1dx_2\nonumber\\[3pt]
&&\leq C\gamma_1^{-\eta}\gamma_2^{-\eta}R_1^{n_1}R_2^{n_2}
\min\big\{\big(t_1R_1\big)^{-\eta},   (t_1R_1)^2  \big\}
\min\big\{\big(t_2R_2\big)^{-\eta},   (t_2R_2)^2
\big\}\nonumber\\
&& \hskip1.5cm \times\|\delta_{(R_1, R_2)} F\|^2_{W^{\infty}_{
{s_1+1+\epsilon\over 2},{s_2+1+\epsilon\over 2} } }
\end{eqnarray}

\noindent
for   all Borel functions $F$
such that ${\rm supp}\ F\subseteq [0, R_1^2]\times [0,R_2^2]$.
\end{prop}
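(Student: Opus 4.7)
The plan is to prove (i) in detail; (ii) follows by symmetry between $L_1$ and $L_2$, and (iii) by running the same strategy in both variables simultaneously. The argument combines three ingredients: the univariate Plancherel-type weighted kernel estimate for compactly supported spectral multipliers of self-adjoint operators with Gaussian heat kernel bounds (as in Duong--Ouhabaz--Sikora \cite{DOS}); finite propagation speed for $\cos(\tau\sqrt{L_i})$ via Lemmas \ref{lemma finite speed} and \ref{lemma finite speed 2}; and a dyadic Littlewood--Paley decomposition in the $\lambda_2$ variable using the partition $1=\sum_\ell\omega(2^{-\ell}\lambda_2)$.

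After the scaling $F(\lambda_1,\lambda_2)\mapsto F(R_1^2\lambda_1,\lambda_2)$ together with $t_1\mapsto R_1^{-1}t_1$, I may assume $R_1=1$ and $\mathrm{supp}\,F\subseteq[0,1]\times[0,\infty)$. Writing $F=\sum_\ell F_\ell$ with $F_\ell=F\cdot\omega(2^{-\ell}\lambda_2)$ supported in $[0,1]\times[2^{\ell-2},2^\ell]$, the $L_2$ spectral resolution realizes the operator slice $\widetilde K^{(1)}_{F_\ell(L_1,L_2)(I-e^{-t_1^2L_1})}(x_1,y_1)$ as an integral $\int K_{H_\ell^{(\lambda_2)}(L_1)}(x_1,y_1)\,dE_{L_2}(\lambda_2)$, where the univariate multipliers $H_\ell^{(\lambda_2)}(\lambda_1)=F_\ell(\lambda_1,\lambda_2)(1-e^{-t_1^2\lambda_1})$ are supported in $\lambda_1\in[0,1]$. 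Via the Fourier inversion $F(\sqrt{L_1})=(2\pi)^{-1}\int\hat F(\tau)\cos(\tau\sqrt{L_1})\,d\tau$, finite propagation speed forces $|\tau_1|>c_0^{-1}|x_1-y_1|$ in the representation, and the univariate weighted Plancherel theorem absorbs the weight $(1+|x_1-y_1|)^{s_1+1+\epsilon}$ into a Sobolev norm of order $(s_1+1+\epsilon)/2$ on $H_\ell^{(\lambda_2)}$. Sobolev embedding $W^2_{s_2/2}\hookrightarrow L^\infty$ in the $\lambda_2$ variable (valid because $s_2>1/2$) then converts the pointwise-in-$\lambda_2$ estimate into the bound $\|\delta_{(1,2^\ell)}F_\ell\|_{W^\infty_{(s_1+1+\epsilon)/2,\,s_2/2}}$. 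Restricting to $|x_1-y_1|>\gamma_1 t_1$ yields the off-diagonal factor $\gamma_1^{-\eta}$ by consuming a small share of the available weight.

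The multiplier factor $\min\{(t_1R_1)^{-\eta},(t_1R_1)^2\}$, which after rescaling becomes $\min\{t_1^{-\eta},t_1^2\}$, enters through two regimes: when $t_1\leq 1$, the pointwise bound $|1-e^{-t_1^2\lambda_1}|\leq t_1^2\lambda_1\leq t_1^2$ on $\mathrm{supp}\,F$ directly gives $t_1^2$; when $t_1\geq 1$, I write $1-e^{-t_1^2\lambda_1}=\phi(t_1\sqrt{\lambda_1})$ with $\phi(u)=1-e^{-u^2}$ and extract $t_1^{-\eta}$ by interpolating the trivial bound $|\phi|\leq 1$ against the fractional regularity of $F$ in $\lambda_1$, via integration by parts in the spectral Fourier representation. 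Summing over $\ell\in\mathbb Z$ converges because positive-$\ell$ contributions decay from the Sobolev gain (again using $s_2>1/2$) and negative-$\ell$ contributions decay from the support of $\omega$ together with boundedness of $F$. The main obstacle I anticipate is the bookkeeping needed to coordinate the off-diagonal weight $(1+|x_1-y_1|)^{s_1}$, the $\gamma_1^{-\eta}$ decay, the $t_1$-cancellation, and the dyadic gain in $\ell$, so that all factors recombine cleanly into \eqref{e6.5} with the stated Sobolev exponents. Part (iii) follows by running the argument in both variables simultaneously with a double Littlewood--Paley decomposition and independent finite-speed/Plancherel estimates in each variable, which yields the product structure of the two $\gamma_i^{-\eta}$ and the two cancellation factors.
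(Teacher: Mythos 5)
Your proposal shares the broad template of the paper's proof (compactly supported $\lambda_1$-multiplier, Fourier/spectral representation, weighted Plancherel in the $L_1$ variable, Sobolev embedding in the $\lambda_2$ variable), but it diverges from the paper on three substantive points, and one of them looks like a genuine gap.

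\textbf{Different spectral representation.} The paper does not use the wave operator $\cos(\tau\sqrt{L_1})$ and finite propagation speed here. Instead it sets
$\Phi_{(t_1^2,0)}(\lambda_1)=1-e^{-t_1^2\lambda_1}$ and
$G(\lambda_1,\lambda_2)=e^{\lambda_1}\,\delta_{(R_1^2,1)}\bigl[F\cdot\Phi_{(t_1^2,0)}\bigr](\lambda_1,\lambda_2)$,
so that $F(L_1,L_2)(I-e^{-t_1^2L_1})=G(L_1/R_1^2,L_2)\,e^{-L_1/R_1^2}$, and then Fourier-inverts $G$ in the $\lambda_1$-slot to get a superposition of complex-time heat semigroups $e^{(i\xi_1-1)L_1/R_1^2}$; the key weighted off-diagonal estimate is supplied by the complex-time Gaussian bound (Lemma 4.2 of \cite{DOS}), with the decay in $\gamma_1 t_1 R_1$ coming directly from the Gaussian tail. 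Your finite propagation speed route is a legitimate alternative in principle, but it is technically a different proof, and you would then have to extract the $\gamma_1^{-\eta}(t_1R_1)^{-\eta}$ factor by ``spending'' regularity of $\hat G^{(1)}(\tau,\cdot)$ beyond the light cone $|\tau|>c_0^{-1}\gamma_1 t_1$ rather than from an exponential tail, which is the more delicate version of the argument.

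\textbf{No Littlewood--Paley decomposition in $\lambda_2$ in the paper's proof.} This is where your plan appears to go wrong. The paper applies the one-variable Sobolev embedding $W^2_{s_2}(\mathbb R)\hookrightarrow L^\infty(\mathbb R)$ ($s_2>1/2$) \emph{directly} to the function $\lambda_2\mapsto\hat G^{(1)}(\xi_1,\lambda_2)$, which turns $\|\hat G^{(1)}(\xi_1,L_2)\|_{L^2\to L^2}$ into $\|\hat G^{(1)}(\xi_1,\cdot)\|_{W^2_{s_2/2}}$ with no $\lambda_2$-decomposition at all. Your plan decomposes $F=\sum_\ell F_\ell$ with $F_\ell=F\cdot\omega(2^{-\ell}\lambda_2)$ and ends up with rescaled norms $\|\delta_{(1,2^\ell)}F_\ell\|_{W_{\cdot,\cdot}}$. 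But the conclusion \eqref{e6.5} involves a \emph{single} unrescaled-in-$\lambda_2$ norm $\|\delta_{(R_1,1)}F\|_{W^{\cdot}_{\cdot,\cdot}}$. Summing (or taking a sup of) the $\ell$-pieces $\|\delta_{(1,2^\ell)}F_\ell\|$ does not reduce to that single norm: the $\lambda_2$-rescalings $\delta_{(1,2^\ell)}$ introduce Jacobian and Sobolev-scale factors $2^{\pm\ell/2}$, $2^{\ell s_2}$ that must be controlled, and the sum over negative $\ell$ in particular is not obviously convergent from ``boundedness of $F$'' alone. What you would naturally obtain this way is a bound in terms of $\sup_{t_2>0}\|\eta_2\delta_{(1,t_2)}F\|$, i.e.~the hypothesis of Theorem~\ref{th6.1}, which is strictly more than Proposition~\ref{prop6.2} is allowed to assume. (The dyadic decomposition in the paper appears only \emph{after} this proposition, in the proof of Theorem~\ref{th6.1}, and it is in the $\lambda_1$ variable, not $\lambda_2$.) You should delete the $\lambda_2$-decomposition entirely and apply Sobolev embedding in $\lambda_2$ once, globally, as the paper does.

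\textbf{The $\min$ factor.} You do not need a two-regime interpolation argument. The factor $(t_1R_1)^{-\eta/2}$ falls out together with $\gamma_1^{-\eta/2}$ from the weighted off-diagonal estimate for the complex-time heat kernel on $|x_1-y_1|>\gamma_1 t_1$, and the factor $\min\{1,(t_1R_1)^2\}$ comes from the elementary bound $\|\delta_{(R_1^2,1)}(1-e^{-t_1^2\,\cdot})\|_{C^k([0,1])}\le C\,(R_1t_1)^2/(1+(R_1t_1)^2)$, since $\operatorname{supp}\delta_{(R_1^2,1)}F\subset[0,1]\times[0,\infty)$; multiplying gives $\min\{(t_1R_1)^{-\eta/2},\,t_1R_1\}$ directly. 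Your proposed ``interpolation against fractional regularity of $F$'' and ``integration by parts in the spectral Fourier representation'' are not needed and are vague enough that, as written, they do not constitute a proof step.

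In summary: the overall architecture is right, but you should (a) either switch to the complex-time heat kernel route of \cite{DOS} or carefully justify the finite-speed route, (b) remove the $\lambda_2$-Littlewood--Paley decomposition and apply Sobolev embedding once in $\lambda_2$, and (c) obtain the $\min$ factor from the $C^k$ bound plus the off-diagonal decay, not from an interpolation argument.
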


\medskip

\begin{proof} Let us first prove (\ref{e6.5}).  Set $\Phi_{(t_1^2, 0)}(\lambda_1)=(1-e^{-t_1^2\lambda_1})$ and

$$G(\lambda_1, \lambda_2)=e^{\lambda_1} \delta_{(R_1^2, 1)}\big[F({\lambda_1}, {\lambda_2})\Phi_{(t^2_1, 0)}(\lambda_1)\big].
$$

\noindent
If ${\widehat  G^{(1)}}$ denotes the Fourier transform of $G(\lambda_1, \lambda_2)$ in
the variable $\lambda_1$, then by the Fourier inversion formula,

\begin{eqnarray}\label{e6.8}
F({L_1}, {L_2})(I-e^{-t_1^2L_1})&=&G( {L_1}/R_1^{2}, {L_2})e^{-L_1/R_1^{2}}\nonumber\\[3pt]
&=&{1\over 2\pi}\int_{{\mathbb R}} e^{(i\xi_1 -1)L_1/R_1^{2}} {\widehat  G^{(1)}}(\xi_1,  {L_2}) d\xi_1.
\end{eqnarray}

\noindent
Then the kernel  $K_{F({L_1},{L_2})(I-e^{-t_1^2L_1})} (x_1,y_1, x_2, y_2)$ satisfies

\begin{eqnarray}\label{e6.88}\ \  \hspace{1cm}
K_{F({L_1},{L_2})(I-e^{-t_1^2L_1})} (x_1,y_1, x_2, y_2)=
{1\over 2\pi}\int_{{\mathbb R}} p_{(1-i\xi_1)R_1^{-2}}(x_1, y_1) K_{{\widehat  G^{(1)}}(\xi_1,  {L_2})}(x_2, y_2) d\xi_1.
\end{eqnarray}

\noindent Notice that by Lemma 4.2 in \cite{DOS}, we have

\begin{eqnarray*}
\int_{|x_1-y_1|>r}   \big| p_{(1-i\xi_1)R_1^{-2}}(x_1, y_1)\big|^2    dx_1 \leq R_1^{n_1}
\exp\Big({-c\Big[{r R_1\over (1+|\xi_1|)}\Big]^2}\Big),
\end{eqnarray*}

\noindent
which gives

\begin{eqnarray*}
&&\hspace{-1cm}\int_{|x_1-y_1|>\gamma_1 t_1}   \big| p_{(1-i\xi_1)R_1^{-2}}(x_1, y_1)\big|^2  |x_1-y_1|^{s_1}   dx_1 \\
&\leq&\sum_{k\geq 0:  \,  k\geq  {\gamma_1t_1R_1\over (1+|\xi_1|)}-1 } \int_{k{ (1+|\xi_1|)\over R_1}  \leq  |x_1-y_1|\leq (k+1){ (1+|\xi_1|)\over R_1} }
  \big| p_{(1-i\xi_1)R_1^{-2}}(x_1, y_1)\big|^2  |x_1-y_1|^{s_1}   dx_1\\
&\leq& (1+|\xi_1|)^{s_2} R_1^{-s_1}\sum_{k\geq 0:  \,  k\geq
{\gamma_1t_1R_1\over (1+|\xi_1|)}-1 } (k+1)^{s_1} \int_{
|x_1-y_1|\geq k{ (1+|\xi_1|)\over R_1} }
  \big| p_{(1-i\xi_1)R_1^{-2}}(x_1, y_1)\big|^2      dx_1\\
  &\leq& (1+|\xi_1|)^{s_1} R_1^{-s_1}\sum_{k\geq 0:  \,  k\geq  {\gamma_1t_1R_1\over (1+|\xi_1|)}-1 }
(k+1)^{s_1} R_1^{n_1} e^{-ck^2}\\
&\leq& C\Big({\gamma_1t_1R_1\over  1+|\xi_1|
}\Big)^{-\eta}(1+|\xi_1|)^{s_1} R_1^{n_1} R_1^{-s_1}
\end{eqnarray*}
for some $\eta>0$. This, in combination with (\ref{e6.8}) and
(\ref{e6.88}),  gives

\begin{eqnarray}\label{e6.9}
A:&=& \bigg[\int_{|x_1-y_1|>\gamma_1 t_1}\big\|{\widetilde {K}}^{(1)}_{F({L_1},{L_2})(I-e^{-t_1^2L_1})} (x_1,y_1)\big\|_{L^2({\mathbb R}^{n_2})\rightarrow
L^2({\mathbb R}^{n_2})}^2 (1+R_1|x_1-y_1|)^{s_1}dx_1\bigg]^{1/2}\nonumber\\
&\leq &C\int_{{\mathbb R}}
\big\|{\widehat  G^{(1)}}(\xi_1,  {L_2})\big\|_{L^2({\mathbb R}^{n_2})\rightarrow
L^2({\mathbb R}^{n_2})}\times \nonumber\\[3pt]
&&\hspace{2cm} \times  \Big( \int_{|x_1-y_1|>\gamma_1 t_1}  \big| p_{(1-i\xi)R_1^{-2}}(x_1, y_1)\big|^2   (1+R_1|x_1-y_1|)^{s_1}dx_1\Big)^{1/2}d\xi_1\\
&\leq& CR_1^{n_1/2}\Big({\gamma_1t_1R_1 }\Big)^{-\eta/2}
\bigg[\int_{{\mathbb R}} \big\|{\widehat  G^{(1)}}(\xi_1,
{L_2})\big\|^2_{L^2({\mathbb R}^{n_2})\rightarrow L^2({\mathbb
R}^{n_2})}  \big(1+|\xi_1|^2\big)^{s_1+1+\epsilon\over 2}
d\xi_1\bigg]^{1/2}.\nonumber
\end{eqnarray}

\noindent
Note that $W^2_s({\mathbb R})\subseteq L^{\infty}({\mathbb R})$ whenever  $s>1/2.$ By the embedding theorem of Sobolev spaces, we have

\begin{eqnarray}\label{e6.10}
 \big\|{\widehat  G^{(1)}}(\xi_1,  {L_2})\big\|_{L^2({\mathbb R}^{n_2})\rightarrow
L^2({\mathbb R}^{n_2})} &\leq&  C\big\|{\widehat  G^{(1)}}(\xi_1, \xi_2)\big\|_{L^{\infty}(\xi_2)} \nonumber\\
&\leq& C\Big[ \int_{{\mathbb R}}
\big|{\widehat  G } (\xi_1, \xi_2)\big|^2    (1+|\xi_2|^2)^{s_2/2}  d\xi_2\Big]^{1/2}.
\end{eqnarray}

\noindent
Putting (\ref{e6.10}) into (\ref{e6.9}), we obtain

\begin{eqnarray}\label{e6.11}
A &\leq& CR_1^{n_1/2}\Big({\gamma_1t_1R_1 }\Big)^{-\eta/2}
\bigg[\iint_{{\mathbb R}^2} \big|{\widehat  G } (\xi_1,
\xi_2)\big|^2   \big(1+|\xi_1|^2\big)^{s_1+1+\epsilon\over 2} \big(1
+|\xi_2|^2\big)^{s_2\over 2}
   d\xi_1d\xi_2\bigg]^{1/2}
\nonumber\\[3pt]
&\leq&CR_1^{n_1/2}\Big({\gamma_1t_1R_1 }\Big)^{-\eta/2}
\|G\|_{W^2_{{s_1+1+\epsilon\over 2},{s_2\over2}}}.
\end{eqnarray}

 \noindent
 However, supp $F\subseteq [0, R_1^2]\times [0, \infty)$ and supp $\delta_{(R_1^2, 1)} F\subseteq [0, 1]\times [0, \infty)$ so
 if $k$ is an integer greater than ${s_1+1+\epsilon\over 2},$

 \begin{eqnarray*}
 \|G\|_{W^2_{{s_1+1+\epsilon\over 2},{s_2\over2}}}&\leq&
 \|\delta_{(R_1^2, 1)}\big[F({\lambda_1}, {\lambda_2})\Phi_{(t_1^2, 0)}(\lambda_1)\big]
 \|_{W^2_{{s_1+1+\epsilon\over 2},{s_2\over2}}}\\
 &\leq&
 C\|\delta_{(R_1^2, 1)} F\|_{W^2_{{s_1+1+\epsilon\over 2},{s_2\over2}}}\times \|\delta_{(R_1^2, 1)} (1-e^{-t_1^2\cdot})\|_{C^k([0,1])}\\
 &\leq& C{(R_1 t_1)^2\over 1+(R_1 t_1)^2}
  \|\delta_{(R_1^2, 1)} F\|_{W^{2}_{{s_1+1+\epsilon\over 2},{s_2\over2}}}.
\end{eqnarray*}

 \noindent
 Therefore,
 \begin{eqnarray*}
A &\leq&CR_1^{n_1/2}\Big({\gamma_1t_1R_1 }\Big)^{-\eta/2} {(R_1
t_1)^2\over 1+(R_1 t_1)^2}
  \|\delta_{(R_1^2, 1)} F\|_{W^{2}_{{s_1+1+\epsilon\over 2},{s_2\over2}}}\\
  &\leq&CR_1^{n_1/2} \gamma_1^{-\eta/2} \min\big\{\big(t_1R_1\big)^{-\eta/2},   t_1R_1  \big\}  \,
  \|\delta_{(R_1^2, 1)} F\|_{W^{2}_{{s_1+1+\epsilon\over 2},{s_2\over2}}}.
\end{eqnarray*}

 \noindent
 This proves (\ref{e6.5}).

 The proof of (\ref{e6.6}) is similar to that of (\ref{e6.5}). For the proof of (\ref{e6.7}), we can obtain it by making
 minor  modifications with Lemma 3.5 of \cite{DOS}, and so we skip it.
\end{proof}

\bigskip

\noindent
\begin{proof}[Proof of Theorem~\ref{th6.1}.]  Let $F: {\mathbb R}^n\rightarrow {\mathbb C}$ be a bounded Borel function such that
 condition (\ref{ec}) holds.   To prove Theorem 6.1, it suffices to verify the assumptions of
 Theorems 4.1 and 5.1 for   $T=F( {L_1}, {L_2})$, i.e.,
there exists some constants $\delta>0$ and $C>0$ such
that for all $\gamma_1,\gamma_2\geq 2$,

\begin{eqnarray}  \label{e6.14}
 \int_{|x_1-y_1|>\gamma_1 t_1}
\|{\widetilde K}^{(1)}_{F({L_1}, {L_2})(I-e^{-t_1^2
{L_1}})}{(x_1,y_1)}\|_{ L^2({\mathbb R}^{n_2})\rightarrow
L^2({\mathbb R}^{n_2}) }dx_1\leq C\gamma_1^{-\delta},
\end{eqnarray}

\begin{eqnarray} \label{e6.15}
 \int_{|x_2-y_2|>\gamma_2t_2}
\|{\widetilde K}^{(2)}_{F({L_1},
{L_2})(I-e^{-t_2^2L_2})}{(x_2,y_2)}\|_{ L^2({\mathbb
R}^{n_1})\rightarrow L^2({\mathbb R}^{n_1}) }dx_2\leq
C\gamma_2^{-\delta},
\end{eqnarray}

\noindent
and
\begin{eqnarray}  \label{e6.16}
 \int_{\substack{|x_1-y_1|>\gamma_1t_1\\ |x_2-y_2|>\gamma_2t_2}}\big|K_{F({L_1},{L_2})(I-e^{-t^2_1L_1})(I-e^{-t^2_2L_2})}(x_1, y_1, x_2, y_2)\big|dx_1dx_2\leq
C\gamma_1^{-\delta}\gamma_2^{-\delta}.
\end{eqnarray}

\medskip

Let us prove (\ref{e6.14}).
We write
$F(\lambda_1,\lambda_2)=F(\lambda_1,\lambda_2)-F(0,0)+F(0,0)$.
Replacing $F$ by $F-F(0,0)$, we may assume in the sequel that
$F(0,0)=0$. Then we choose a function $\omega$ in $C^{\infty}_c({\mathbb R}_+)$ supported in $[1/4, 1]$ such that
 $\sum_{\ell\in{\mathbb Z}}\omega(2^\ell \lambda)=1, \ \forall \lambda\in {\mathbb R}_+.$
  Then we
have for all $\lambda_1,\lambda_2\geq 0$,

\begin{eqnarray}
&&F(\lambda_1,\lambda_2)=\sum_{\ell=-\infty}^\infty
\omega(2^{-\ell}\lambda_1)F(\lambda_1,\lambda_2)=:
\sum_{\ell=-\infty}^\infty \omega_{(\ell, 1)}F (\lambda_1,\lambda_2).\nonumber
\end{eqnarray}

\smallskip

\noindent Let $\Phi_{(t_1, 0)}(\lambda_1)=(1-e^{-(t_1\lambda_1)^2})$. Then we have

\begin{eqnarray*}
&&F({L_1}, {L_2})(1 -e^{-t_1^2{L_1}}) =\sum_{\ell=-\infty}^\infty
\omega_{(\ell, 1)} F  (1-\Phi_{(t_1, 0)})({L_1}, {L_2}) .\nonumber
\end{eqnarray*}

\noindent   By Proposition \ref{prop6.2},
we have
\begin{eqnarray*}
&&\hspace{-0.5cm}\int_{|x_1-y_1|>\gamma_1t_1} \|{\widetilde
K}^{(1)}_{\omega_{(\ell, 1)} F  (1-\Phi_{(t_1, 0)})({L_1},
{L_2})}{(x_1,y_1)}\|_{ L^2({\mathbb
R}^{n_2})\rightarrow L^2({\mathbb R}^{n_2}) }dx_1\\
&&\hskip.2cm \leq  \bigg( \int_{|x_1-y_1|>\gamma_1t_1} \|{\widetilde
K}^{(1)}_{\omega_{(\ell, 1)} F  (1-\Phi_{(t_1, 0)})({L_1},
{L_2})}{(x_1,y_1)}\|_{ L^2({\mathbb R}^{n_2})\rightarrow
L^2({\mathbb R}^{n_2}) }^2(1+2^{\ell}|x_1-y_1|)^{2s_1}dx_1
\bigg)^{1/2}\\
&&\hskip1cm
\times\bigg(\int_{{\mathbb R}^{n_1}}(1+2^{\ell}|x_1-y_1|)^{-2s_1}dx_1
\bigg)^{1/2}\\
&&\hskip.2cm \leq C \gamma_1^{-\eta/2}    \min\big\{\big(2^{\ell}t_1 \big)^{-\eta/2}, 2^{\ell}t_1
\big\}
  \|\delta_{(2^\ell, 1)}[\omega_{(\ell, 1)} F   ]\|_{W^{2}_{s_1+{1+\epsilon\over 2},{s_2\over 2}}}.
\end{eqnarray*}

\noindent Therefore,

\begin{eqnarray*}
&&\hspace{-1.2cm}\int_{|x_1-y_1|>\gamma_1t_1} \|{\widetilde
K}^{(1)}_{F({L_1}, {L_2})(I-e^{-t_1^2 {L_1}})}{(x_1,y_1)}\|_{
L^2({\mathbb
R}^{n_2})\rightarrow L^2({\mathbb R}^{n_2}) }dx_1\\
 &\leq& C\gamma_1^{-\eta/2}
\sum_{\ell }
\min\big\{\big(2^{\ell}t_1 \big)^{-\eta/2},   2^{\ell}t_1  \big\}
  \|\delta_{(2^\ell, 1)}[\omega_{(\ell, 1)} F   ]\|_{W^{2}_{s_1+{1+\epsilon\over 2},{s_2\over 2}}}
 \\
&\leq& C
 \gamma_1^{-\eta/2} \sup_{\ell}\|\delta_{(2^\ell, 1)}[\omega_{(\ell, 1)} F   ]\|_{W^{2}_{s_1+{1+\epsilon\over 2},{s_2\over 2}}}
 \end{eqnarray*}

\noindent as required to prove estimate (\ref{e6.14}).

From Proposition~\ref{prop6.2},
the similar argument  as above shows  (\ref{e6.15}) and (\ref{e6.16}), and so we skip
 it. This completes the proof of Theorem 5.2.
 \end{proof}

\bigskip

{\bf Acknowledgments.} 
X.T. Duong was
supported by Australia Research Council (ARC).
 L.X. Yan was supported by   Australia Research Council (ARC) and
 NNSF of China (Grant No.  10925106),
 and Grant for Senior Scholars from the Association of Colleges and Universities of Guangdong.

\vskip 1cm

\end{document}